\documentclass[11pt, reqno]{amsart}
\usepackage{amsfonts}
\usepackage{hyperref}
\usepackage{graphicx}
\usepackage{tabularx}
\usepackage{array}
\usepackage[usenames,dvipsnames]{color}
\usepackage[T1]{fontenc}
\usepackage[utf8]{inputenc}
\usepackage[russian, french, english]{babel}
\usepackage{comment}
\usepackage{amsmath}
\usepackage{amsthm}
\usepackage{amssymb}
\usepackage{todonotes}
\usepackage{fullpage}
\usepackage{xcolor}
\usepackage{tikz}
\usepackage{listings}
\usepackage{soul}
\usepackage[noindentafter]{titlesec}
\usepackage{mathtools}
\usepackage{enumerate, enumitem}

\tikzstyle{legend_general}=[rectangle, rounded corners, thin,
                          top color= white,bottom color=lavander!25,
                          minimum width=2.5cm, minimum height=0.8cm,
                          violet]
\hypersetup{
    colorlinks=true,
    urlcolor=blue,
    linkcolor=blue,
    citecolor=OliveGreen,
    linktoc=page,
}

\DeclareMathOperator{\mad}{mad}

\renewcommand{\epsilon}{\varepsilon}

\newtheorem{theorem}{Theorem}[section]
\newtheorem*{theorem*}{Theorem}

\newtheorem{lemma}[theorem]{Lemma}
\newtheorem*{lemma*}{Lemma}

\theoremstyle{definition}

\newtheorem{claim}[theorem]{Claim}

\titleformat{\section}[block]{\scshape\filcenter}{\thesection.}{1ex}{}
\titleformat{\subsection}[block]{\bfseries}{\thesubsection.}{1ex}{}
\titleformat{\subsubsection}[runin]{\itshape}{\bfseries\upshape\thesubsubsection.}{1ex}{}[.---]
\titleformat{\paragraph}[runin]{\normalfont\normalsize\bfseries}{}{1em}{}
\titlespacing*{\paragraph}{0pt}{3.25ex plus 1ex minus .2ex}{1em}

\usepackage{algorithm}
\usepackage{algpseudocode}
\counterwithin{algorithm}{section}



\title{Density of list- and correspondence-critical graphs}

\thanks{Peter Bradshaw received funding from NSF RTG grant DMS-1937241.}

\author{Peter Bradshaw}
\address{Department of Mathematics, University of Illinois Urbana-Champaign}
\email{bradshap@mailbox.sc.edu}

\begin{document}
\maketitle
\begin{abstract}
    A graph $G$ is list $k$-critical if $G$ is not  $(k-1)$-list-colorable, but every proper subgraph of $G$ is $(k-1)$-list-colorable.
    In this paper, we study the function $f_{\ell}(n,k)$ denoting
    the minimum number of edges in an $n$-vertex list $k$-critical graph, 
    as well as the function
    $g_{\ell}(k) = \liminf_{n \rightarrow \infty} \frac 2n (f_{\ell}(n,k) - k + 1)$.
    We show that for all $k \geq 4$ and $n \geq k+2$, 
    every list
    $k$-critical graph on $n \geq k+2$ vertices has more than $(k-1+\frac 1{28}) \frac n2$ edges, which implies that $g_{\ell}(k)
    \geq \frac{1}{28}$ for all $k \geq 4$.
     This is the first result showing that $\liminf_{k \rightarrow \infty} g_{\ell}(k) > 0$.
    We also show that $g_{\ell}(k) \geq \frac{1}{24}$ for all $k \geq 352$.
    As a corollary to our result, we obtain the following improvement to Brooks' theorem:
    For all $d \geq 3$, if $G$ has no $K_{d+1}$ subgraph and has maximum average degree at most $d+\frac 1{28}$, then $G$ is $d$-list-colorable.

    All of our results hold in the setting of correspondence coloring (DP-coloring) as well. 
    As a corollary of our correspondence coloring result, we also show that for each $d \geq 3$,
    a minimal unsatisfiable anti-functional constraint satisfaction problem (CSP) with variable domains of size $d$ has a primal graph either containing $K_{d+1}$ or with average degree at least $d+\frac 1{28}$.
\end{abstract}
\section{Introduction}
We consider \emph{graphs}, which we define as having no loops or parallel edges.
We also consider \emph{multigraphs},
which possibly have parallel edges but no loops.


\subsection{Background: Proper coloring}
A \emph{proper coloring} of a graph $G$ is an assignment $\phi:V(G) \rightarrow \mathbb N$ such that for every adjacent vertex pair $u,v \in E(G)$, $\phi(u) \neq \phi(v)$.
A graph $G$ is \emph{$k$-colorable} if $G$ has a proper coloring $\phi:V(G) \rightarrow \{1, \dots, k\}$, and the function $\phi$ is called a \emph{$k$-coloring} of $G$.
If $G$ has no $(k-1)$-coloring but every proper subgraph of $G$ has a $(k-1)$-coloring, then $G$ is \emph{$k$-critical}.

In 1951, Dirac~\cite{1951Dirac}
introduced the notion of a critical graph, 
and in series of subsequent papers \cite{1952DiracAproperty,
 1953Dirac,1957DiracMap,1957DiracAtheorem}, he studied
 the minimum number $f(n,k)$ of edges in a $k$-critical graph on $n$ vertices.
As $K_k$ is the only $k$-critical graph on $k$ vertices, it follows that $f(k,k) = \binom k2$, and as no $k$-critical graph on $k+1$ vertices exists,
$f(k+1,k)$ is undefined.
Every $k$-critical $n$-vertex graph with $n \geq k+2$
has minimum degree at least $k-1$ and a vertex of degree at least $k$ by Brooks' Theorem,
which implies that $f(n,k) \geq \frac 12 (k-1) n+1$ for all $n \geq k+2$.
In 1957, Dirac established the following improved lower bound for $f(n,k)$:
\begin{theorem}[\cite{1957DiracAtheorem}]
\label{thm:Dirac}
    For all $k \geq 4$ and $n \geq k+2$, $f(n,k) \geq  (k-1) \frac n2 + \frac 12 (k-3)$.
\end{theorem}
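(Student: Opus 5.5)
The plan is to count degree excess. Write $n = |V(G)|$, let $L$ be the set of vertices of degree exactly $k-1$ (the low vertices), and let $H = V(G)\setminus L$ (the high vertices, each of degree at least $k$). Then
\[
2|E(G)| = (k-1)n + \sum_{h \in H} \bigl(d(h) - (k-1)\bigr),
\]
so it suffices to show that the total excess $\sigma := \sum_{h\in H}(d(h)-k+1)$ is at least $k-3$.

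Every high vertex contributes at least $1$ to $\sigma$. So if $|H| \ge k-3$ we are done, and we may assume $|H| \le k-4$. Also $H \neq \emptyset$: otherwise $G$ is $(k-1)$-regular and not $(k-1)$-colorable, so Brooks' theorem makes $G = K_k$, contradicting $n \ge k+2$.

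Next I will bring in Gallai's theorem on the low-vertex subgraph of a $k$-critical graph. It says that every component $T$ of $G[L]$ is a Gallai tree: each block is a complete graph or an odd cycle. A vertex $v \in T$ has $k-1-d_T(v) \le |H| \le k-4$ neighbours in $H$, so $d_T(v) \ge 3$.

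Take a leaf block of $T$. It must be a clique $K_m$, because an odd-cycle leaf block would contain non-cut vertices with $d_T = 2$. Its non-cut vertices each send exactly $k-m$ edges to $H$, and $k - m \le |H|$ gives $m \ge k-|H| \ge 4$. Also $m \le k-1$, since there is no $K_k$ in a $k$-critical graph other than $K_k$ itself. Moreover $k - m \ge 1$, so every leaf block genuinely sends edges to $H$. Counting gives $e(T,H) \ge (m-1)(k-m)$ from one leaf block, and roughly twice that when $T$ has two or more leaf blocks. As a function of $m$ on the allowed range this is minimised at an endpoint, so
\[
e(T,H) \ge \min\{\,k-2,\ (k-|H|-1)|H|\,\}.
\]

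Now use $\sigma = \sum_{h\in H} d(h) - (k-1)|H| \ge e(L,H) + 2e(G[H]) - (k-1)|H|$. It remains to show that $e(L,H) + 2e(G[H]) \ge (k-1)|H| + k-3$. This needs a lower bound on the number of components of $G[L]$, or on $e(T,H)$ for a single component. I will get it from criticality: no vertex of $H$ can have all its neighbours inside one $K_{k-1}$ component without creating a $K_k$. Also, each $h \in H$ has degree at least $k$ while $|H| - 1 \le k-5$, so $h$ has at least $k - |H| + 1 \ge 5$ neighbours in $L$.

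The main obstacle is this final counting step: turning the per-component bounds into $\sigma \ge k-3$ when $|H|$ is small, say $|H| \in \{1,2\}$ with $G[L]$ consisting of a few large cliques. There I plan to argue directly. If $G[L]$ has only one or two components, then $G$ has a small vertex cut $H$, and by criticality one can $(k-1)$-colour the pieces and combine the colourings, a Dirac-style cut argument, giving a contradiction. Otherwise there are at least three components, and the bound $e(T,H) \ge k-2$ summed over them gives $\sigma \ge 3(k-2) - (k-1)|H|$. This is at least $k-3$ for $|H| \le 2$ and $k \ge 4$, except in the case $|H| = 2$, $k = 4$, where the gap of one must be closed by adding the $2e(G[H])$ term or by using a fourth component. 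For $3 \le |H| \le k-4$ the second term in the minimum is of order $k|H|$, and the same summation should go through more comfortably.
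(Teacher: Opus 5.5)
The paper does not prove this statement; it is quoted from Dirac~\cite{1957DiracAtheorem}, so your argument has to stand on its own. Your set-up is sound: the excess identity, the reduction to $1\le |H|\le k-4$, Gallai's theorem, leaf blocks being cliques $K_m$ with $k-|H|\le m\le k-1$, and $e(T,H)\ge\min\{k-2,(k-|H|-1)|H|\}$. However, the inequality that actually carries the theorem, $\sum_{h\in H}d(h)\ge (k-1)|H|+k-3$, is never established, and the ways you propose to close it fail.

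Most seriously, having only one or two components of $G[L]$ cannot lead to a contradiction. The Haj\'os join of two copies of $K_k$ ($k\ge 5$) is $k$-critical with $H=\{x\}$ and $G[L]$ connected: it consists of two $K_{k-1}$'s joined by an edge. For a second example, let $k\ge 6$, take $K_k-h_1h_2$ and a disjoint $K_{k-1}$, and join each vertex of the $K_{k-1}$ to exactly one of $h_1,h_2$, with each $h_i$ receiving at least two. This graph is $k$-critical, $H=\{h_1,h_2\}$, and $G[L]$ is the disjoint union of $K_{k-2}$ and $K_{k-1}$. Both graphs have $\sigma=k-3$ exactly.

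In the second example $H$ is a cut, yet one side forces $h_1,h_2$ to receive the same colour while the other side forbids it. This is precisely the Haj\'os--Ore mechanism, so ``colour the pieces and combine'' is simply false. These configurations must be handled by counting, and there your bounds give only $e(L,H)+2e(G[H])\ge 2(k-2)$ against the required $3k-5$.

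The remaining steps also break down. With three components and $|H|=2$ you get $3(k-2)-2(k-1)=k-4$, which falls short of $k-3$ for every $k$, not only for $k=4$; the case $|H|=2$, $k=4$ is already excluded by $|H|\le k-4$. For $3\le |H|\le k-4$, the minimum in your per-component bound is the first term, $k-2$, which does not grow with $|H|$. Reaching $k-3$ by summation would then require at least $|H|+2$ components (absent edges inside $H$), or correspondingly many leaf blocks, and nothing in the proposal produces them.

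So the heart of the proof is missing: a global use of criticality that forces the high vertices to absorb many edges from $L$. The local leaf-block count cannot supply it on its own.
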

Later, Dirac \cite{1974Dirac}
also characterized the $k$-critical graphs for which the bound in Theorem \ref{thm:Dirac} is exact.

When $k$ is fixed and $n$ increases,
the lower bound in Theorem \ref{thm:Dirac} is asymptotically equal to the Brooks lower bound of $f(n,k) \geq \frac 12 (k-1) n+1$.
With the goal of finding an improved lower bound in this setting, we define the function $g(k) = \liminf_{n \rightarrow \infty} (f(n,k) - k + 1)$, and we observe that the Brooks lower bound and Theorem \ref{thm:Dirac} only imply that $g(k) \geq 0$ for all $k \geq 4$.
The first improved lower bound for $g(k)$ came from Gallai, who showed that $g(k) \geq \frac{k-3}{k^2-3}$ for all $k \geq 4$:
\begin{theorem}[\cite{1963Gallai1}]
\label{thm:Gallai-fnk}
    For all $k \geq 4$ and $n \geq k+2$, $f(n,k) > \left ( k-1+ \frac{k-3}{k^2-3} \right ) \frac n2$.
\end{theorem}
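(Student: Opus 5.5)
We prove Gallai's bound by discharging, using Gallai's theorem on the structure of low vertices. Let $G$ be $k$-critical on $n \geq k+2$ vertices. Then $\delta(G) \geq k-1$. Call a vertex \emph{low} if it has degree $k-1$, and \emph{high} otherwise; let $L$ and $H$ be the sets of low and high vertices. The key structural input is Gallai's theorem: every block of $G[L]$ is a complete graph or an odd cycle, i.e.\ $G[L]$ is a Gallai forest. Since $n \geq k+2$, $G$ is not $K_k$. Since $G$ is not $(k-1)$-colorable, Brooks' theorem shows $G$ is not $(k-1)$-regular, so $H \neq \emptyset$. Moreover, no component of $G[L]$ is $K_k$, since such a component would be all of $G$. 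The goal is
\[ 2|E(G)| = \sum_v \deg(v) > \left(k-1+\tfrac{k-3}{k^2-3}\right) n. \]

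\textbf{Step 1: a bound on edges inside low components.} For each component $C$ of $G[L]$, a Gallai tree with maximum degree at most $k-1$ that is not $K_k$, show
\[ 2|E(C)| \leq \left(k-2+\tfrac{2}{k-1}\right)|V(C)|. \]
Argue by induction on the number of blocks. Peel off a leaf block $B$, which is either $K_t$ with $t \leq k-1$ or an odd cycle, and check that the average-degree contribution of its non-cut vertices stays within the bound. The extremal case is $K_{k-1}$-blocks glued at cut vertices. One can take the precise form of this estimate from Gallai's original counting.

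\textbf{Step 2: counting edges from $L$ to $H$.} Each low vertex $v$ has $k-1-\deg_{G[L]}(v)$ neighbors in $H$. Summing over $L$ and using Step 1,
\[ e(L,H) \geq (k-1)|L| - 2|E(G[L])| \geq \left(1-\tfrac{2}{k-1}\right)|L| = \tfrac{k-3}{k-1}|L|. \]

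\textbf{Step 3: discharging.} Give each vertex charge $\deg(v)$. Each high vertex $u$ sends charge $\epsilon$ along every edge to a low neighbor. Choose $\epsilon$ so that every vertex ends with charge at least $k-1+\delta$, where $\delta = \frac{k-3}{k^2-3}$.

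A high vertex ends with at least
\[ \deg(u) - \epsilon\deg(u) \geq k(1-\epsilon). \]
Requiring $k(1-\epsilon) \geq k-1+\delta$ gives the condition $\epsilon \leq \frac{1-\delta}{k}$.

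The low vertices cannot be charged one at a time, because some low vertices receive nothing. Instead, average over each low component $C$. Its total charge is at least
\[ (k-1)|V(C)| + \epsilon\cdot\tfrac{k-3}{k-1}|V(C)|, \]
so it suffices that $\epsilon\frac{k-3}{k-1} \geq \delta$.

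Equating the two constraints gives
\[ \frac{1-\delta}{k}\cdot\frac{k-3}{k-1} = \delta, \]
which solves to $\delta = \frac{k-3}{k^2-3}$. The inequality is strict: $H$ is nonempty, and equality in every step (for example, all high vertices having degree exactly $k$ and all low components extremal) cannot all occur simultaneously. Alternatively, the strictness follows from slack in Step 1 when $k \geq 4$.

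\textbf{Main obstacle.} The crux is Step 1: the sharp upper bound on the average degree of a Gallai tree not equal to $K_k$ with maximum degree at most $k-1$. Odd-cycle blocks and small complete blocks must each be checked to contribute no more than the $K_{k-1}$-chain extremal configuration. I would first try the leaf-block induction, tracking for each block the quantity $2|E(B)| - (k-2+\frac{2}{k-1})(|V(B)|-1)$. I would need to verify that it is nonpositive for every admissible block, with the cut-vertex accounting handled by the degree cap $k-1$.
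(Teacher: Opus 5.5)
Your route is the one the paper has in mind. The paper does not prove this theorem; it cites Gallai and says the bound follows from Theorem \ref{thm:Gallai-L} plus a counting argument. Your Steps 2 and 3 are that counting argument, and they are correct. In particular, $e(L,H)\ge\frac{k-3}{k-1}|L|$ combined with $\epsilon=\frac{1-\delta}{k}$ gives exactly $\delta=\frac{k-3}{k^2-3}$.

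The flaw is in your plan for Step 1, which you rightly call the crux. The block quantity $2|E(B)|-(k-2+\frac{2}{k-1})(|V(B)|-1)$ is not nonpositive for every admissible block. For $B=K_{k-1}$ it equals $\frac{(k-2)(k-3)}{k-1}>0$ when $k\ge 4$, so block-by-block peeling fails at precisely the block you call extremal. Your description of the extremal case is also impossible: two $K_{k-1}$-blocks cannot share a cut vertex, since that vertex would have degree $2(k-2)>k-1$.

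What the degree cap actually gives is this. Each vertex of a $K_{k-1}$-block has one unit of degree to spare, and any block through it with at least $3$ vertices would use $2$. So such a block meets the rest of the tree only through bridges ($K_2$-blocks). The fix is to peel a leaf $K_{k-1}$ together with its bridge. The combined quantity is
\[ (k-1)(k-2)+2-\left(k-2+\tfrac{2}{k-1}\right)(k-1)=0. \]
Every other block type ($K_t$ with $2\le t\le k-2$, or $C_q$ with $q\ge 5$) gives a strictly negative quantity for $k\ge 4$. The base cases (a single vertex, or a single block other than $K_k$) have slack at least $2$. This proves Step 1 in the stronger form $2|E(C)|\le (k-2+\frac{2}{k-1})|V(C)|-2$.

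That slack is what gives strictness when $L\neq\emptyset$; when $L=\emptyset$ you have $2|E(G)|\ge kn$ directly. Your first strictness justification does not work on its own. A high vertex of degree exactly $k$ whose neighbours are all low ends with exactly $k(1-\epsilon)$, so ``$H$ nonempty'' by itself gives no slack.
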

The main ingredient in Gallai's improved lower bound is a characterization of the structure of the vertices of degree $k-1$ in a $k$-critical graph. 
We say that a connected graph $T$ is a \emph{Gallai tree}
if every block of $T$ is a clique or an odd cycle.
We say that a \emph{Gallai forest} is a graph in which every component is a Gallai tree.
Theorem \ref{thm:Gallai-fnk} follows from a combination of the following observation of Gallai and a counting argument:
\begin{theorem}[\cite{1963Gallai2}]
\label{thm:Gallai-L}
    Let $G$ be a $k$-critical graph, and let $\mathcal L$ be the subgraph of $G$ induced by the vertices of degree $k-1$. Then, $\mathcal L$ is a Gallai forest.
\end{theorem}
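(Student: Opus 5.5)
We prove Theorem~\ref{thm:Gallai-L} by contradiction, using the standard degree-choosability argument. Suppose $G$ is $k$-critical and some component $B$ of $\mathcal L$ is not a Gallai tree.

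\textbf{Step 1: $B$ is not degree-choosable.} The key classical input is the characterization of degree-choosable graphs due to Borodin and Erd\H{o}s--Rubin--Taylor: a connected graph $H$ is \emph{not} degree-choosable if and only if every block of $H$ is a clique or an odd cycle. Here degree-choosable means that $H$ has a proper coloring from any lists $L$ with $|L(v)| \geq d_H(v)$ for all $v$. Since $B$ is not a Gallai tree, $B$ is degree-choosable. If I had to reprove this direction, I would argue as follows. First, a connected graph with a vertex $v$ satisfying $|L(v)| > d(v)$ is colorable, by greedily coloring in reverse BFS order from $v$. Next, a $2$-connected graph that is neither a clique nor an odd cycle contains an even cycle with at most one chord, or a theta subgraph. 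Such a subgraph can be list-colored so that some vertex effectively gains a spare color, and this reduces to the first case. Finally, the general case follows by handling one block at a time. This combinatorial characterization is the main obstacle. I would cite it rather than reprove it.

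\textbf{Step 2: Define lists on $B$ from a coloring of $G - V(B)$.} Since $B$ is nonempty, $G - V(B)$ is a proper subgraph of $G$. By criticality it has a proper $(k-1)$-coloring $\phi$ using colors $\{1,\dots,k-1\}$. For each $v \in V(B)$, define
\[ L(v) = \{1,\dots,k-1\} \setminus \phi\bigl(N_G(v) \setminus V(B)\bigr). \]
Every vertex of $B$ has degree $k-1$ in $G$. Moreover, $B$ is an induced subgraph of $G$, since it is a component of the induced subgraph $\mathcal L$. Hence the neighbors of $v$ in $G$ are split between $B$ and $G - V(B)$, with $d_B(v)$ of them in $B$. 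So $v$ has exactly $k-1-d_B(v)$ neighbors outside $B$, and therefore
\[ |L(v)| \geq (k-1) - (k-1-d_B(v)) = d_B(v). \]

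\textbf{Step 3: Extend the coloring.} By Step 1, $B$ has an $L$-coloring. Combined with $\phi$, it gives a proper $(k-1)$-coloring of $G$. Edges inside $B$ are handled by the properness of the $L$-coloring. Edges from $B$ to $G - V(B)$ are handled by the definition of $L$. Edges inside $G - V(B)$ are handled by $\phi$. This contradicts the fact that $G$ is $k$-critical and so has no $(k-1)$-coloring. Therefore every component of $\mathcal L$ is a Gallai tree, and $\mathcal L$ is a Gallai forest.

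The only nontrivial content is the degree-choosability characterization in Step 1. Steps 2 and 3 are routine bookkeeping.
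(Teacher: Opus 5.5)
Your proof is correct and follows the route the paper indicates. The paper cites this result and remarks that it follows from the degree-choosability theorem (Theorem~\ref{thm:deg-choosable}); its proof of the analogous Lemma~\ref{lem:L-is-GDP} is your argument: color the rest of $G$ using criticality, note that the leftover lists give a degree list assignment on the low vertices, and invoke the characterization. The only cosmetic difference is that you color $G - V(B)$ for a single bad component $B$, whereas the paper colors $G-\mathcal L$ and applies the theorem to all of $\mathcal L$ at once; both work.
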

The bound $g(k) \geq \frac{k-3}{k^2 - 3}$ in Theorem \ref{thm:Gallai-fnk}
was improved several times.
First, Krivelevich~\cite{1997Krivelevich,1998Krivelevich}
showed that $g(k) \geq \frac{k-3}{k^2 - 2k + 1}$ for $k\geq 4$,  and then Kostochka and Stiebitz~\cite{2003KS} showed that $g(k) \geq \frac{2(k-3)}{k^2+6k-9-6/(k-2)}$ for $k \geq 6$.
In 2014, Kostochka and Yancey
finally
showed that $g(k) = \frac{k-3}{k-1}$ for all $k \geq 4$, which is best possible due to a construction of Haj\'os \cite{1961Hajos}:
\begin{theorem}[\cite{2014KoYa}]
\label{thm:KY}
    Let $k \geq 4$, and let $n \geq k+2$. Then,
    $f(n,k) \geq \left ( k-1+\frac{k-3}{k-1} \right ) \frac n2 - \frac{k(k-3)}{2(k-1)}$.
\end{theorem}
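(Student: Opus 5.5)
The approach I would take is the Kostochka--Yancey potential method. For a graph $H$, define the potential
\[
\rho_k(H) = (k+1)(k-2)|V(H)| - 2(k-1)|E(H)|.
\]
Since $(k+1)(k-2) = (k-1)^2 + (k-3)$, the statement is equivalent to the inequality $\rho_k(G) \le k(k-3)$ for every $k$-critical graph $G \ne K_k$. Indeed, dividing $2(k-1)|E(G)| \ge (k+1)(k-2)n - k(k-3)$ by $2(k-1)$ gives exactly the claimed bound. Note that $\rho_k(K_k) = k(k-3)(k-1)$ is much larger, so $K_k$ is the genuine exception. I would argue by minimal counterexample: take a $k$-critical $G \ne K_k$ with $\rho_k(G) > k(k-3)$ and $|V(G)|$ minimum. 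By minimality, every smaller $k$-critical graph other than $K_k$ satisfies the bound.

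\textbf{Potential of proper subsets.} The first block of steps shows that every vertex set $R$ with $2 \le |R| < n$ has large potential, roughly $\rho_k(G[R]) \ge \rho_k(K_{k-1})$, with only a few controlled exceptions. Given such an $R$, fix a $(k-1)$-coloring $\phi$ of $G[R]$, which exists by criticality. Build $G'$ from $G$ by replacing $R$ with a copy of $K_{k-1}$: identify the vertices of $R$ by their color class under $\phi$, so each class becomes one vertex, and add the missing edges between classes. Then $G'$ is not $(k-1)$-colorable, because any coloring of $G'$ would pull back, together with $\phi$, to a coloring of $G$. So $G'$ contains a $k$-critical subgraph $G''$, and $G''$ is smaller than $G$. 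Applying minimality to $G''$ (or treating separately the case $G'' = K_k$) and comparing potentials yields the lower bound on $\rho_k(G[R])$. From this I would derive structural consequences. $G$ has no $K_{k-1}$ minus an edge in which the non-adjacent pair has few outside neighbors. Vertices of degree $k-1$, the low vertices, have restricted neighborhoods: in particular, two adjacent low vertices cannot share $k-2$ common neighbors unless they lie in a $K_{k-1}$, and each $K_{k-1}$ contains few low vertices. I would combine this with Theorem~\ref{thm:Gallai-L}, which says the low vertices induce a Gallai forest.

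\textbf{Reducible configurations.} The second block uses the potential bounds to exclude specific local configurations. The key targets are clusters of low vertices with identical closed neighborhoods, and $K_{k-1}$'s containing $k-2$ low vertices attached to high vertices. For each configuration, I would delete it, $(k-1)$-color the remainder, and extend the coloring greedily. Where greedy extension fails, I would instead identify two non-adjacent neighbors and apply the potential bound to the resulting critical subgraph.

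\textbf{Discharging.} Finally, I would run a discharging argument. Each vertex $v$ starts with charge $\deg(v)$, and I must show the final total exceeds $\bigl(k-1+\frac{k-3}{k-1}\bigr)n$ minus the constant, contradicting $\rho_k(G) > k(k-3)$. High vertices send charge to adjacent low vertices, with amounts depending on whether the low vertex lies in a $K_{k-1}$ block or in a smaller Gallai-tree block. The Gallai forest structure, together with the excluded configurations, shows that every low vertex receives at least $\frac{k-3}{k-1}$ while no high vertex drops below $k-1+\frac{k-3}{k-1}$.

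I expect the main obstacle to be the first block. The collapse $G'$ can produce a critical subgraph $G''$ that meets the new clique only partially, or that equals $K_k$, and the potential accounting must handle each such case with the exact constant $k(k-3)$. This is where the precise coefficients $(k+1)(k-2)$ and $2(k-1)$ are forced. My first step there would be to prove a general gap lemma: if $G''$ uses $t$ of the new vertices, then $\rho_k(G[R])$ exceeds $\rho_k(K_t)$ by a controlled amount. Every other structural lemma would then be derived from this gap lemma.
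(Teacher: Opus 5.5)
The paper does not prove this theorem; it quotes it from Kostochka and Yancey. Your outline follows their strategy: the potential $\rho_k$, collapsing $R$ along a $(k-1)$-coloring onto a clique, low vertices and clusters, then discharging. But your setup has an arithmetic error that breaks the induction as you formulate it. In fact $\rho_k(K_k)=k(k+1)(k-2)-k(k-1)^2=k(k-3)$, not $k(k-1)(k-3)$. So $K_k$ is not an exception: it is the equality case of $\rho_k(G)\le k(k-3)$, and the minimality hypothesis must cover all smaller $k$-critical graphs, $K_k$ included. In your version, minimality applies only to critical graphs other than $K_k$, and $K_k$ is believed to have potential $k(k-1)(k-3)$. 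Then the comparison
\[
\rho_k(G[R]) \ge \rho_k(G[W]) - \rho_k(G'') + \rho_k(K_t), \qquad t=|V(G'')\cap X|,\ W = R\cup (V(G'')\setminus X),
\]
becomes vacuous exactly when $G''=K_k$. That is a typical output of the collapse, e.g.\ whenever a vertex outside $R$ has neighbors in all $k-1$ color classes. The ``main obstacle'' you anticipate is an artifact of this miscalculation. With the correct value, the case $G''=K_k$ needs no separate treatment.

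Beyond this, the three places where the proof actually lives are asserted rather than argued.

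First, the collapse need not produce a smaller graph. If $\phi$ is injective on $R$ (for instance if $G[R]$ is complete), the collapsed graph is essentially $G$ with $R$ completed to a clique, and minimality gives nothing. Moreover, $\min_{2\le t\le k-1}\rho_k(K_t)=\rho_k(K_{k-1})=2(k-1)(k-2)$, while $\rho_k(K_1)=(k+1)(k-2)$ is smaller by $(k-2)(k-3)$. So when $t=1$ and $W=V(G)$, your gap lemma yields only $\rho_k(G[R])\ge (k+1)(k-2)+1$, not ``roughly $\rho_k(K_{k-1})$''. This case needs its own argument.

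Second, the reducible configurations (bounds on cluster sizes, on the number of low vertices in a $K_{k-1}$) are named but neither stated precisely nor proved.

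Third, the discharging paragraph states only the desired conclusion. A vertex of degree $k$ has surplus just $1-\frac{k-3}{k-1}=\frac{2}{k-1}$, while each low vertex needs $\frac{k-3}{k-1}$. So a scheme in which high vertices feed their low neighbors can balance only after you have proved strong restrictions on how low vertices attach to high vertices of small degree. Those restrictions and the exact rules are the substance of the Kostochka--Yancey proof, and they are missing here.
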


Theorem \ref{thm:KY}
is a remarkable breakthrough,
as it shows that $\lim_{k \rightarrow \infty} g(k) = 1$,
whereas all previous results fail even to show that $\liminf_{k \rightarrow \infty} g(k) > 0$.

\subsection{Background: List coloring and correspondence coloring}
A \emph{list assignment} on a graph $G$ is a function $L:V(G) \rightarrow 2^{\mathbb N}$
that assigns a \emph{list} $L(v) \subseteq \mathbb N$ to each vertex $v \in V(G)$.
Then, an \emph{$L$-coloring} of $G$ is a proper coloring $\phi:V(G) \rightarrow \mathbb N$ satisfying $\phi(v) \in L(v)$ 
for every $v \in V(G)$. 
If $G$ has an $L$-coloring for every list assignment $L$ satisfying
$|L(v)| \geq k$ for each $v\in V(G)$, then $G$ is \emph{$k$-list-colorable}. 
If $G$ is not $(k-1)$-list-colorable but every proper subgraph of $G$ is $(k-1)$-list-colorable, then $G$ is \emph{list $k$-critical}.
Note that if $G$ is $k$-list-colorable, then $G$ has an $L$-coloring in particular for the list assignment $L$ that assigns $L(v) = \{1, \dots, k\}$ to each vertex $v$, 
so $G$ is also $k$-colorable.

We write $f_{\ell}(n,k)$ for the minimum number of edges in a list $k$-critical graph on $n$ vertices, and we also define $g_{\ell}(k) = \liminf_{n \rightarrow \infty} (f_{\ell}(n,k) - k + 1)$.
Kostochka and Stiebitz \cite{2003KS} initiated the study of the function $f_{\ell}(n,k)$ in 2003;
however, earlier results in the setting of list coloring yield immediate bounds for this function.
One such result is Vizing's extension of Brooks' Theorem to list coloring \cite[Theorem 2]{1976Vizing},
which implies that $f_{\ell}(n,k) \geq \frac 12 (k-1) n + 1$ for all $n \geq k+2$.
Another such result concerns \emph{degree choosability}, which is defined as follows.
Given a graph $G$, a \emph{degree list assignment} is a list assignment $L:V(G) \rightarrow 2^{\mathbb N}$ such that $|L(v)| \geq d(v)$ for every $v \in V(G)$.
The following result
found by Borodin \cite{1979Borodin} 
and independently by Erd\H os, Rubin, and Taylor \cite{1980ErRuTa} 
is a generalization of Theorem \ref{thm:Gallai-L}.

\begin{theorem}
\label{thm:deg-choosable}
    Let $G$ be a graph, and let $L$ be a degree list assignment.
    If $G$ has no $L$-coloring, then $|L(v)| = d(v)$ for each $v \in V(G)$, and $G$ is a Gallai forest.
\end{theorem}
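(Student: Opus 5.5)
We prove Theorem~\ref{thm:deg-choosable} by induction on $|V(G)|$. We may assume $G$ is connected: if $G$ has no $L$-coloring, then some component has no $L$-coloring, and every other component $H$ is $L$-colorable. Note that $|L(v)| \geq d(v)$ still holds within the non-colorable component. For the equality claim $|L(v)|=d(v)$ on all of $G$, we argue per component. This needs care: a colorable component is not covered by the conclusion. So we read the statement with $G$ assumed connected, or else as saying that every component fails to be $L$-colorable; the substantive case is connected $G$.

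\textbf{Step 1 (equality of list sizes).} Suppose some vertex $v$ has $|L(v)| > d(v)$. Order the vertices by nonincreasing distance from $v$ in a BFS tree and color greedily, ending with $v$. Every vertex $u \neq v$ has a neighbor later in the order (its BFS parent), so when $u$ is colored at most $d(u)-1$ colors are forbidden. At $v$, at most $d(v) < |L(v)|$ colors are forbidden. Hence $G$ is $L$-colorable, a contradiction, and $|L(v)| = d(v)$ for all $v$.

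\textbf{Step 2 (structure).} Suppose $G$ is connected, not $L$-colorable, and not a Gallai tree. Then some block $B$ of $G$ is neither a clique nor an odd cycle. We aim to color $G$ anyway.

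First we reduce to a $2$-connected situation. If $G$ has a cut vertex, choose an end-block $B'$ with cut vertex $x$, and let $G' = G - (B' - x)$. A natural move is to color $G-(V(B')\setminus\{x\})$ first. Instead we use the following cleaner route. Suppose two adjacent vertices $u,w$ have $L(u) \neq L(w)$, so some $c \in L(u)\setminus L(w)$. Color $u$ with $c$ and delete it. Then $w$ loses a neighbor without losing a color, so $|L'(w)| > d_{G-u}(w)$, and every other neighbor keeps $|L'| \geq d$. Each component of $G-u$ contains a neighbor of $u$, and each such neighbor $y$ satisfies $|L'(y)| \ge d_{G-u}(y)$, with strict inequality if $c\notin L(y)$. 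Component by component, either Step 1 finishes the coloring, or by induction every component of $G-u$ must have $c$ in the lists of all its neighbors of $u$ and must be a Gallai tree. So we choose $u$ and $c$ to force strictness in each component, which is the delicate point. The fallback is the classical argument: in a connected graph where all lists at adjacent vertices coincide, all lists are equal to a common list of size $d(v)$. Then $G$ is regular and we are in ordinary colouring, where Brooks' theorem handles the $2$-connected non-clique, non-odd-cycle case. The general non-regular equal-list case contradicts Step 1.

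\textbf{Main obstacle.} The key lemma is the 2-connected case: a 2-connected graph that is neither complete nor an odd cycle is degree-choosable. For this I would use the standard fact, due to Rubin, that such a graph contains an induced even cycle with at most one chord (a $\theta$-type subgraph) that is degree-choosable. I would then extend a coloring of its complement greedily toward it using BFS order, and finish on the subgraph using its degree-choosability with the remaining lists.

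For general $G$, handling the blocks is the hard part. Take a block $B$ that is neither a clique nor an odd cycle, and find such a degree-choosable induced subgraph $H$ inside $B$. Color $G - V(H)$ greedily in order of decreasing distance to $H$; this is possible since $G$ is connected. Each vertex of $H$ then retains at least $d_H$ colors, and $H$ is completed. Proving Rubin's structural fact is the step I expect to require the most case analysis: one takes an induced cycle in $B$ that is not a whole clique or odd-cycle block and extends it by an ear.
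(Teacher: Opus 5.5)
The paper gives no proof of this statement; it quotes it from Borodin and from Erd\H{o}s--Rubin--Taylor. Your argument therefore has to stand on its own.

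Your caveat about connectivity is justified. As literally stated, the theorem fails for a $K_2$ with lists $\{1\},\{1\}$ together with a disjoint $C_4$ whose lists have size $3$, so it must be read for connected $G$. Step 1 is correct, and so is your final greedy extension, which colors $G-V(H)$ by decreasing distance to an induced subgraph $H$ of a bad block.

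\textbf{The gap.} The one non-routine step is never proved: that a $2$-connected block $B$ which is neither complete nor an odd cycle is degree-choosable.
\begin{itemize}
\item Your first reduction stops at the ``delicate point''. At a cut vertex $u$ you cannot force slack in every component of $G-u$. The fallback covers only the case where all adjacent lists coincide, so that paragraph establishes nothing.
\item Your second route outsources the block case to Rubin's lemma and to degree-choosability of even cycles with at most one chord. You prove neither.
\item The sketch you give is not sufficient as stated. Start from an induced triangle and take an ear vertex adjacent to all three of its vertices: you get a $K_4$, which contains no induced even cycle with at most one chord. So the non-completeness of $B$ must be used in choosing the starting configuration, and you never say how.
\end{itemize}

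\textbf{How to close it.} You already have the ingredients, and Rubin's lemma is not needed. Run your edge trick inside $B$ instead of inside $G$.
\begin{itemize}
\item Let $L'$ be a list assignment on $B$ with $|L'(v)|\ge d_B(v)$, and suppose $B$ has no $L'$-coloring. By Step 1, $|L'(v)|=d_B(v)$ for every $v$.
\item Suppose some edge $uw$ of $B$ has $L'(u)\neq L'(w)$. After renaming, there is $c\in L'(u)\setminus L'(w)$; color $u$ with $c$. Because $B$ is $2$-connected, $B-u$ is connected, and $w$ now has more available colors than neighbors in $B-u$. Step 1 then colors $B-u$, a contradiction. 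At this level there is no delicate point.
\item Hence all lists on $B$ equal one common list of size $d$. So $B$ is $d$-regular and not $d$-colorable. Brooks' theorem, which you already invoke, then makes $B$ complete or an odd cycle, contradicting the choice of $B$.
\item Finally apply your last paragraph with $H=B$, which is induced in $G$ because every block is. Color $G-V(B)$ greedily in order of decreasing distance to $B$. Each $v\in V(B)$ still has at least $d_B(v)$ colors available, so $B$ can be colored.
\end{itemize}
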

Theorem \ref{thm:deg-choosable} implies that the lower bound in Theorem \ref{thm:Dirac} also holds for $f_{\ell}(n,k)$ and hence that $g_{\ell}(k) \geq \frac{k-3}{k^2-3}$ for all $k \geq 4$.

Several improved lower bounds have been established for the functions $f_{\ell}(n,k)$
and $g_{\ell}(k)$,
including bounds of Kostochka and Stiebitz~\cite{2003KS}, Kierstead and Rabern~\cite{2020KiRa}, Cranston and Rabern~\cite{2018CrRa}, and Rabern~\cite{2016Rabern,2018Rabern}.
Currently, the best known lower bound for $f_{\ell}(n,k)$
for fixed $k$ and large $n$ is due to Choi, Kostochka, Xu, and the author:
\begin{theorem}[\cite{BCKX2,BCKX}]
\label{thm:BCKX}
    Let $k \geq 4$.
    Let $n \geq 11$ when $k = 4$, and let $n \geq k+2$ when $k \geq 5$. Then,
    \[
        f_{\ell}(n,k) \geq \begin{cases}
            \frac 85 n + \frac 15 & \text{ if } k = 4, \\
            \left ( k - 1 + \left \lceil \frac{k^2-7}{2k-7} \right \rceil^{-1} \right ) \frac n2 + \left \lceil \frac{k^2-7}{2k-7} \right \rceil^{-1} & \text{ if } k \geq 5.
        \end{cases}
    \]
\end{theorem}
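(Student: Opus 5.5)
The approach is discharging, with the reducible configurations coming from Theorem~\ref{thm:deg-choosable}. Let $G$ be list $k$-critical on $n$ vertices, and let $L$ be a $(k-1)$-list assignment with no $L$-coloring. By criticality $\delta(G)\ge k-1$. Call $v$ \emph{low} if $d(v)=k-1$ and \emph{high} otherwise. Set $s=\lceil \frac{k^2-7}{2k-7}\rceil$ and $\epsilon = 1/s$; the target is $\sum_v d(v) \ge (k-1+\epsilon)n + 2\epsilon$.

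Give each vertex initial charge $d(v)$. The aim is to redistribute so that every vertex ends with at least $k-1+\epsilon$, with a small global surplus left for the additive constant. A high vertex has excess $d(v)-(k-1)-\epsilon \ge 1-\epsilon$, which it shares among its low neighbours, each of which needs $\epsilon$. The first rule is that each high vertex sends $\epsilon$ to each low neighbour, up to its budget. The second rule moves charge inside a component $T$ of the low subgraph $\mathcal L$. By Theorem~\ref{thm:deg-choosable} applied to $\mathcal L$ (after colouring $G-V(T)$, which is possible by criticality), $T$ is a Gallai tree. A low vertex of $T$ with no high neighbour must then get charge from other vertices of $T$, so within $T$ charge flows from vertices with many high neighbours to those with none. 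A Gallai tree that is not $K_{k-1}$ has many edges leaving it relative to its size; for a Gallai tree $T$ with $t$ vertices, the number of edges from $T$ to $G-T$ is at least about $(k-3)t+2$ unless $T=K_{k-1}$. Hence every non-clique component already receives enough, and the deficit sits in the $K_{k-1}$ components and in low vertices with very few high neighbours.

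The heart of the proof is a structural lemma limiting how many low neighbours, counted with the appropriate weights, a high vertex can support. Take a set $H$ of high vertices and the set $A$ of low vertices of all low components touching $H$. Suppose $H$ "overloads" $A$, meaning that the charge $\sum_{v\in H}(d(v)-k+1-\epsilon)$ is less than $\epsilon|A|$. I would then show $G$ is $L$-colourable, a contradiction. To do this, colour $G-(A\cup H)$ by criticality and reduce the lists on $A\cup H$. Each low vertex keeps at least as many colours as its degree into $A\cup H$. A high vertex $v$ has only $k-1-(d(v)-d_{A\cup H}(v))$ colours left, which is deficient by $d(v)-k+1$. To fix the high vertices, I would precolour or delete edges so that each high vertex with many low neighbours in $A$ becomes degree-choosable. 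Concretely, I would colour a high vertex $v$ with a colour chosen to appear on no list of certain low neighbours (or on many of them simultaneously), so that those neighbours are effectively freed. If such a colour cannot be chosen, the lists on $N(v)\cap A$ are constrained enough that the remaining low structure is not a Gallai tree with equal-size lists, and Theorem~\ref{thm:deg-choosable} then finishes the colouring. The counting that balances ``a high vertex of degree $k-1+j$ can free about $j$ cliques of size $k-1$'' against ``each $K_{k-1}$ needs $\epsilon(k-1)$ charge plus what its single external edge per vertex carries'' is exactly where $s=\lceil\frac{k^2-7}{2k-7}\rceil$ should come from: roughly, $2k-7$ units of slack per high vertex are compared with $k^2-7$ units of demand.

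For $k=4$ the cliques are triangles and the general lemma is too weak. There I would separately enumerate the small low configurations: triangles, odd cycles, and $K_2$'s attached to high vertices of degree $4$. I would then run a finer discharging giving each low vertex $1/5$, which yields $\frac85 n+\frac15$; the hypothesis $n\ge 11$ is what excludes the small exceptional critical graphs. The additive constants in both cases come from the final surplus, since at least one high vertex must exist by Vizing's extension of Brooks' theorem and it keeps some leftover charge. I expect the main obstacle to be the overloading lemma. Specifically, the hard part is choosing colours on the high vertices so that the low Gallai trees become colourable: the colour choices at different high vertices interact through shared low neighbours. My first attempt would be an Alon--Tarsi or kernel-type orientation argument on the bipartite graph between $H$ and the cliques in $A$, rather than a greedy choice.
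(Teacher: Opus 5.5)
\textbf{There is a genuine gap.} Your overloading lemma, which carries the whole argument, is not a reduction of the theorem. It is a restatement of it. Take $H$ to be the set of all high vertices, which is nonempty as you note. Since $G$ is connected, $A$ is then the set of all low vertices. Adding $\sum_{v\in A}(d(v)-k+1-\epsilon)=-\epsilon|A|$ to both sides shows that \emph{$H$ overloads $A$} says exactly $2|E(G)|<(k-1+\epsilon)n$. So the lemma contains the theorem, up to the additive constant.

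Your sketch of its proof gives no leverage. There is nothing outside $A\cup H$ to precolour, and the choice of colours on high vertices is left unspecified, as you acknowledge. Nothing in the sketch addresses the configurations that actually have to be excluded. For instance, suppose every vertex of each low copy of $K_{k-1}$ has exactly one high neighbour, and every high vertex has degree $k$ and is adjacent only to such cliques. Such a graph would have average degree $k-1+\frac{1}{k+1}$, which is below the bound. The natural way to rule this out is to delete a clique and join its outside neighbours, as in the final Case~1 of the proof in Section~3. That produces a graph that is not a subgraph of $G$, so criticality of $G$ says nothing about it; you need an induction hypothesis over a broader class.

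This is why the statement, which the paper only quotes from \cite{BCKX,BCKX2}, is proved there by the potential method. One takes a minimum counterexample over all pairs $(G,\ell)$, with $G$ a multigraph and $\ell\le k-1$, in the correspondence setting. One then proves $\rho(G)\le -2$ for the potential described in Section~\ref{sec:p-setup}. Lower bounds on the potential of proper subsets, such as the gap lemma (Lemma~\ref{lem:gap} is taken from \cite{BCKX}), turn reductions of this kind into contradictions. The additive term $\lceil\frac{k^2-7}{2k-7}\rceil^{-1}$ is exactly what $\rho(G)\le-2$ gives. It does not come from leftover charge at a high vertex, which your first rule may use up.

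Second, the structural fact you rely on is false, and it puts the bottleneck in the wrong place. The count $(k-3)t+2$ is the number of edges leaving a \emph{tree} on $t$ low vertices. A low $K_{k-2}$, which is a Gallai tree, sends out only $2(k-2)$ edges.

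In fact, non-clique Gallai trees are where the constant is decided. Take $m$ copies of $K_{k-2}$ with $m$ odd, and glue them in a tree-like way by $\frac{m-1}{2}$ triangles, each triangle having its three vertices in three different copies. All degrees are at most $k-1$, $|T|=m(k-2)$, and $\|T,\overline T\|=2m(k-2)-3(m-1)$. If every outside neighbour has degree $k$, each can pass at most $\frac{1-\epsilon}{k}$ along each edge. The requirement $\epsilon|T|\le\frac{1-\epsilon}{k}\|T,\overline T\|$ then tends to $\epsilon\le\frac{2k-7}{k^2-7}$ as $m\to\infty$. That is exactly where $\lceil\frac{k^2-7}{2k-7}\rceil$ comes from; your heuristic of high vertices freeing copies of $K_{k-1}$ does not produce it.

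So your claim that non-clique components already receive enough is wrong. Verifying them needs a careful block-by-block induction like Lemma~\ref{lem:GDP}, which is modelled on Lemma~3.2 of \cite{BCKX}. Copies of $K_{k-1}$ in the low subgraph must be handled by the separate global reduction described above. Your outline for $k=4$ (see \cite{BCKX2}) likewise contains nothing that can be checked.
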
   
Theorem \ref{thm:BCKX}
shows that $g_{\ell}(4) \geq \frac 15$ and that $g_{\ell}(k) \geq \left \lceil \frac{k^2-7}{2k-7} \right \rceil^{-1} $ for all $k \geq 5$.
Thus, in contrast to the function $g(k)$, it is unclear based on current knowledge whether $\lim_{k \rightarrow \infty} g_{\ell}(k)$ exists and whether $\liminf_{k \rightarrow \infty} g_{\ell} (k) > 0$.

One point of interest about Theorem \ref{thm:BCKX}
is that the lower bound also holds for graphs that are critical for \emph{correspondence coloring}, defined as follows.
Given a multigraph $G$, a \emph{correspondence cover} of $G$ is a pair $(H,L)$ satisfying the following properties:
\begin{itemize}
    \item $H$ is a multigraph, and $L:V(G) \rightarrow 2^{V(H)}$ assigns a subset $L(v) \subseteq V(H)$ to each $v \in V(G)$;
    \item For each $v \in V(G)$, the set $L(v) \subseteq V(H)$  is independent;
    \item The family $\{L(v):v \in V(G)\}$ partitions $V(H)$;
    \item For each pair $u,v \in V(G)$, $H[L(u) \cup L(v)]$ is a union of $|E_G(u,v)|$ matchings.
\end{itemize}
We call the elements of $V(H)$ \emph{nodes}.
We note that if $|L(v)| = k$ for each $v \in V(G)$ and each edge $uv \in E(G)$ corresponds to a perfect matching $H[L(u) \cup L(v)]$, 
then $H$ is a $k$-fold topological covering of $G$.

Given a correspondence cover $(H,L)$ of a multigraph $G$,
an \emph{$(H,L)$-coloring} of $G$ 
is a function $\phi:V(G) \rightarrow V(H)$ 
that satisfies $\phi(v) \in L(v)$ for each $v \in V(G)$ and whose image is an independent set.
We often identify the function $\phi$ with its image.
A multigraph $G$ is \emph{correspondence $k$-colorable} if $G$ has an $(H,L)$-coloring for every correspondence cover $(H,L)$ of $G$ satisfying $|L(v)| \geq k$ for each $v \in V(G)$.
If $G$ is not correspondence $(k-1)$-colorable, but every proper subgraph of $G$ is correspondence $(k-1)$-colorable, 
then $G$ is \emph{correspondence $k$-critical}.
Correspondence coloring was introduced by Dvo\v r\'ak and Postle \cite{2018DvPo} in 2018
as a tool for showing that every planar graph with no cycle of length $4$ through $8$ is $3$-colorable, and it is often called \emph{DP-coloring}.
We write $f_{DP}(n,k)$
for the minimum number of edges in a correspondence $k$-critical (simple) graph on $n$ vertices, and we define $g_{DP}(k) = \liminf_{n \rightarrow \infty} (f_{DP}(n,k) - k + 1)$.

As the lower bound in Theorem \ref{thm:BCKX}
also holds for $f_{DP}(n,k)$, it follows that $g_{DP}(4) \geq \frac 15$ and that $g_{DP}(k) \geq \left \lceil \frac{k^2-7}{2k-7} \right \rceil^{-1} $ for each $k \geq 5$.
In fact, a construction in \cite{BCKX2} shows that $g_{DP}(4) = \frac 15$.
As with the function $g_{\ell}(k)$,
 it is unclear based on current knowledge whether $\lim_{k \rightarrow \infty} g_{DP}(k)$ exists and whether $\liminf_{k \rightarrow \infty} g_{DP} (k) > 0$.

In the setting of correspondence coloring, Theorem \ref{thm:deg-choosable} has an important analogue.
Given a multigraph $G$, a \emph{degree cover} of $G$ is a correspondence cover $(H,L)$ that satisfies $|L(v)| \geq d(v)$ for each $v \in V(G)$.
We say that $G$ is \emph{degree correspondence-colorable} if $G$ has an $(H,L)$-coloring for every degree cover $(H,L)$ of $G$.
The multigraphs that are not degree correspondence coloring have a structural characterization similar to Theorem \ref{thm:deg-choosable}.
Given a multigraph $G$ and an integer $r \geq 1$,
the multigraph $rG$ is obtained from $G$ by replacing each edge of $G$ with $r$ parallal edges, and each graph $rG$ is called a \emph{multiple} of $G$.
A \emph{GDP-tree} (named after Gallai, Dvo\v r\'ak, and Postle) is a connected multigraph in which every block is a multiple of a clique or a cycle, and a \emph{GDP-forest} if a multigraph in which each component is a GDP-tree.
The following theorem of Bernshteyn, Kostochka, and Pron'
\cite{2017BeKoPr}
is a correspondence analogue of Theorems \ref{thm:Gallai-L} and \ref{thm:deg-choosable}.
\begin{theorem}[\cite{2017BeKoPr}]
\label{thm:GDP-char}
    If a multigraph $G$ is not degree correspondence-colorable, then $G$ is a GDP-forest.
    Furthermore, if $(H,L)$ is a degree cover of $G$ that admits no $(H,L)$-coloring, then $|L(v)| = d(v)$ for each $v \in V(G)$.
\end{theorem}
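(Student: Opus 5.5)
The plan is to prove the statement for connected $G$. If $G$ is disconnected and has no $(H,L)$-coloring, then some component $C$ has no coloring from the restricted cover. That restricted cover is a degree cover of $C$, because degrees and matchings are computed inside $C$. So the argument below is applied componentwise to whichever components fail. (As literally stated, the theorem needs every component to fail; I would read it with $G$ connected, which is how it is used for the low-degree subgraph $\mathcal L$.) Throughout, I induct on $|V(G)|$.

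\emph{Tight lists.} Suppose $|L(v_0)| > d(v_0)$ for some $v_0$. Order $V(G)$ by non-increasing distance from $v_0$ in a BFS tree, so $v_0$ comes last. Color greedily in this order. Each $v \ne v_0$ has a parent that is still uncolored when $v$ is colored, and each edge forbids at most one node of $L(v)$. Hence at most $d(v)-1$ nodes of $L(v)$ are forbidden, and some node remains. Finally $v_0$ has more than $d(v_0)$ nodes and at most $d(v_0)$ are forbidden, so it can be colored too. This contradicts the assumption, so $|L(v)| = d(v)$ for all $v$.

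\emph{Fullness of matchings.} Take $v$ with $G-v$ connected; for instance $v$ is a non-cut vertex of a leaf block, or any vertex if $G$ is $2$-connected. Color $v$ with some $x \in L(v)$ and delete from each $L(u)$ the neighbors of $x$. This yields a degree cover of $G-v$, and it has no coloring. By the tightness step applied to the connected graph $G-v$, every neighbor $u$ of $v$ must lose exactly $|E_G(u,v)|$ nodes. Since $x$ was arbitrary, every node of $L(v)$ is covered by each of the $|E_G(u,v)|$ matchings between $L(u)$ and $L(v)$, and these matchings use distinct nodes of $L(u)$. Moreover, by induction, $G-v$ is a GDP-tree.

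\emph{Reduction to $2$-connected $G$.} If $G$ has a cut vertex, take a leaf block $B$ with cut vertex $c$. First consider $B$ with the cover restricted to it, except that $L(c)$ is shortened to $d_B(c)$ nodes chosen so that no coloring of $B$ extends. Then show that the rest of $G$ with $c$ precolored still has a degree-type cover, and deduce that both $B$ and $G-(B-c)$ are non-colorable under degree covers. Induction then gives that every block is a GDP block. The step that needs care is choosing the shortened $L(c)$. I would take it as the set of nodes $x\in L(c)$ such that every coloring of $G-(B-c)$ from its restricted cover is blocked at $c$ by $x$ (that is, the coloring cannot be completed by giving $c$ the node $x$). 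A counting argument then shows that this set has size at least $d_B(c)$.

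\emph{Main obstacle: the $2$-connected case.} Here the task is to show that $G$ is a multiple of a clique or a cycle. Using fullness at every vertex, each edge bundle $uv$ induces $|E(u,v)|$ perfect matchings between $L(u)$ and $L(v)$, and $|L(v)| = d(v)$. Suppose $G$ is not a multiple of a clique or a cycle. I would first try to find two adjacent vertices $u,w$ and a third vertex $z$ adjacent to both such that $G-\{u,w\}$ is connected and the bundle multiplicities differ (either $|E(u,z)| \ne |E(w,z)|$, or $z$ has some neighbor not shared). Then choose nodes $a \in L(u)$ and $b \in L(w)$ that are non-adjacent in $H$, which is possible since a node of $L(u)$ has only $|E(u,w)| < d(u)$ neighbors in $L(w)$. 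Choose them so that they have a common neighbor in $L(z)$, or so that $z$ gets slack. Coloring $u$ and $w$ and then finishing greedily toward $z$ would give a coloring, a contradiction. If this local configuration does not exist, $G$ is regular with uniform multiplicities. In that case I expect that comparing $G-v$ being a GDP-tree for every $v$ (from induction) forces $G$ to be a multiple of $K_n$ or $C_n$. I expect locating the right pair $u,w$ in full generality to be the hardest part.
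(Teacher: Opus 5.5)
The paper does not prove this statement; it is imported from \cite{2017BeKoPr}, so your proposal has to stand on its own. Your connectivity caveat is correct, and the BFS-greedy tightness step and the fullness step at a non-separating vertex are fine.

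The cut-vertex step, however, uses the wrong list for $B$. Let $X\subseteq L(c)$ be the set of nodes $x$ such that $G-(B-c)$ has a coloring with $c\mapsto x$. Your set is $L(c)\setminus X$. Since $G$ is uncolorable, this set contains every node that extends into $B$, and greedy coloring in $B$ shows such nodes exist. So $B$ \emph{is} colorable from your list, which in fact has size exactly $d_{G-(B-c)}(c)$, not at least $d_B(c)$. Your set is the right list for the other side, $G-(B-c)$. For $B$, use $X$ itself. Greedy coloring in the connected graph $G-(B-c)$ shows that any $d_{G-(B-c)}(c)+1$ nodes of $L(c)$ contain one in $X$, so $|X|\ge d_B(c)$. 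Moreover, no coloring of $B$ can put a node of $X$ on $c$.

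The genuine gap is the $2$-connected case, which is the real content of the theorem. You leave it as a plan, and the plan does not work. Note first that fullness at both ends already makes each bundle an $|E(u,v)|$-regular simple bipartite graph, so $|L(u)|=|L(v)|$ and $G$ is regular for free.

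Your adjacent-pair move can be blocked by the cover. For $a\in L(u)$, no admissible $b$ exists iff $N_{L(w)}(N_{L(z)}(a))\subseteq N_{L(w)}(a)$. Regularity of the bundles excludes this only when $|E(u,w)|<\max\{|E(u,z)|,|E(w,z)|\}$. For example, take the triangular prism with each triangle covered by three disjoint triangles of $H$: your configuration exists in $G$ but is useless there.

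Your dichotomy and fallback are also false. Consider $C_4$ with multiplicities $1,2,1,2$. It is $3$-regular and triangle-free, so your configuration never exists. Every $G-v$ is a path of multiple edges, hence a GDP-tree. Yet $G$ is not a GDP block. It is degree colorable only because of the cover: across a double bundle the next color is forced, and of the $9$ color pairs on a single edge at most $3+3$ are bad. So induction on $G-v$ alone cannot finish; you must use the cover.

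A complete argument needs three cases.
\begin{enumerate}
\item[(a)] The underlying simple graph is neither complete nor a cycle. Take an induced path $uzw$ with $G-\{u,w\}$ connected (Lov\'asz's lemma from the proof of Brooks' theorem). Take any $\gamma\in L(z)$ and neighbors $a\in L(u)$, $b\in L(w)$ of $\gamma$. These are compatible since $uw\notin E(G)$, and $z$ gains slack, so greedy coloring finishes.
\item[(b)] The underlying graph is complete with nonuniform multiplicities. Your adjacent-pair idea works if $uw$ is chosen as the lightest edge of a nonuniform triangle.
\item[(c)] Even cycles with alternating multiplicities $p\neq q$. These need a separate cover-based argument.
\end{enumerate}
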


\subsection{Our results}
Our main result is the following theorem.
\begin{theorem}
\label{thm:intro}
    Let $G$ be a correspondence $k$-critical (simple) graph that is not $K_k$. 
    \begin{enumerate}
        \item If $k \geq 352$, then $2|E(G)| \geq (k-1+\frac{1}{24} )|V(G)|+ k-1$.
        \item If $k \geq 47$, then $2|E(G)| \geq (k-1+\frac{1}{28} )|V(G)|+ k-1$.
    \end{enumerate}
\end{theorem}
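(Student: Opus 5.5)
We plan to prove Theorem~\ref{thm:intro} by discharging (equivalently, a weighted counting argument) built on the structure of low vertices. Let $G$ be correspondence $k$-critical with $G\neq K_k$, and fix a cover $(H,L)$ with $|L(v)|=k-1$ for every $v$ such that $G$ has no $(H,L)$-coloring. By criticality, $\delta(G)\ge k-1$. Call a vertex \emph{low} if $d(v)=k-1$ and \emph{high} otherwise, and let $\mathcal L$ be the subgraph induced by the low vertices. The first step is to show that $\mathcal L$ is a GDP-forest. To do this, we $(H,L)$-color $G-V(\mathcal L)$ using criticality and then remove the used colors from the lists; this leaves a degree cover of $\mathcal L$ with no coloring, and Theorem~\ref{thm:GDP-char} applies. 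Since $G$ is simple, each block of $\mathcal L$ is a clique or a cycle. Moreover, $G\ne K_k$ gives that no component of $\mathcal L$ contains $K_k$, so every clique block has at most $k-1$ vertices.

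The second step is the counting. Give each vertex charge $d(v)-(k-1)-\epsilon$, where $\epsilon=\frac1{24}$ or $\frac1{28}$. The goal is to show that the total charge is at least roughly $(k-1)(1-\epsilon)$, which yields the stated additive constant $k-1$. Each high vertex starts with charge at least $1-\epsilon$, while each low vertex starts at $-\epsilon$. High vertices send charge to adjacent low vertices. The danger is a high vertex adjacent to many low vertices, since it must spread its surplus among them. We handle this by giving a low vertex $v$ credit from its high neighbors in proportion to $\frac{1}{\text{number of low neighbors of that high vertex}}$. We also use the fact that a low vertex $v$ lying in a clique block $B$ of size $s$ has $k-1-d_{\mathcal L}(v)$ high neighbors. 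Low vertices in cycle blocks or small blocks have many high neighbors, roughly $k-3$, so they are easily satisfied. The difficult case is a large clique block $B\cong K_s$ with $s$ close to $k-1$, whose vertices have few high neighbors.

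The main obstacle is precisely these near-cliques: low $K_{k-1}$- or $K_{k-2}$-blocks attached to few high vertices. Here we need extra structure, namely lemmas showing that such configurations are \emph{reducible} in the correspondence setting. For instance, if a large low clique $B$ has all its external edges going to a small set $S$ of high vertices, then we precolor $G-B$ by criticality and try to extend the coloring. The extension fails only if the residual cover on $B$ is ``tight'' in every respect. We then recolor vertices of $S$ (each high vertex $x\in S$ has $d(x)\ge k$, which gives slack) to break that tightness, in the style of Kostochka--Stiebitz and Bernshteyn--Kostochka--Pron'. Concretely, we would prove that every high vertex is adjacent to at most a bounded fraction of the vertices of a given large low clique, and that two large low cliques cannot share too many high neighbors. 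Then each such clique collects total charge at least $\epsilon s$ from its high neighbors.

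Finally, we optimize the discharging rules subject to these lemmas. The thresholds $k\ge 47$ and $k\ge 352$ arise from the lower-order error terms, such as $O(1/k)$ losses from medium-sized cliques. The constant $\frac1{28}$, improving to $\frac1{24}$ for large $k$, comes from balancing what a high vertex of degree exactly $k$ can afford to send against the worst-case demand of low near-cliques.
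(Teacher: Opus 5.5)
Your plan fails at the step you identify as the main obstacle, and the same example shows that the additive term cannot be reached. Let $G$ be the Haj\'os join of two copies of $K_k$. Take cliques on $\{x,a_1,\dots,a_{k-1}\}$ and $\{y,b_1,\dots,b_{k-1}\}$, delete $xa_1$ and $yb_1$, identify $x$ and $y$ into a vertex $z$, and add the edge $a_1b_1$. Each of $a_1$ and $z$ completes $\{a_2,\dots,a_{k-1}\}$ to a $K_{k-1}$, so in any $(k-1)$-coloring both $a_1$ and $b_1$ receive the color of $z$. Hence $G$ is not correspondence $(k-1)$-colorable. Every vertex other than $z$ has degree $k-1$ and $G-z$ is connected, so the only nonempty subgraph of $G$ with minimum degree at least $k-1$ is $G$ itself. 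Since a critical subgraph must have minimum degree at least $k-1$, $G$ is correspondence $k$-critical.

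This example breaks your plan in three ways.
\begin{enumerate}
\item Here $\{a_1,\dots,a_{k-1}\}$ is a low $K_{k-1}$, and the single high vertex $z$ is adjacent to $k-2$ of its vertices. So your proposed lemma, that a high vertex sees only a bounded fraction of a large low clique, is false. No recoloring argument carried out inside the class of critical graphs can exclude such configurations, because they genuinely occur there.
\item Your per-vertex crediting fails for low cut vertices of $\mathcal L$ with no high neighbors, such as interior vertices of a tree component. Charge has to be pooled over components of $\mathcal L$, as in (R4) and Lemma~\ref{lem:GDP}.
\item Here $|V(G)|=2k-1$ and $2|E(G)|=2k^2-2k-2<(k-1+\epsilon)(2k-1)+k-1$ for every $\epsilon>0$. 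So the inequality with additive term $k-1$ is false as written. Your target of total charge ``roughly $(k-1)(1-\epsilon)$'' would not give $k-1$ in any case. What the paper's route actually yields is $\rho(G)=((k-1)\lambda+1)|V(G)|-2\lambda|E(G)|\le -k+1$, i.e.\ $2|E(G)|\ge(k-1+\frac{1}{\lambda})|V(G)|+\frac{k-1}{\lambda}$, which this example satisfies.
\end{enumerate}

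The missing idea is the one the paper's whole proof rests on. It does not argue inside critical simple graphs with lists of size exactly $k-1$. Instead it proves the stronger Theorem~\ref{thm:main} for all multigraphs with arbitrary $\ell\le k-1$, by taking a minimum counterexample to $\rho\le -k+1$.

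Because this target is so strong, every nonempty proper $S$ satisfies both $\rho(S)\ge(\lambda-2)\|S,\overline S\|+1$ (Lemma~\ref{lem:gap}) and $\rho(S)\le(\lambda+2)\|S,\overline S\|-k+1$ (Lemma~\ref{lem:bd-UB}). Hence $\|S,\overline S\|\ge k/4$ and $\rho(S)>\frac{k}{4}(\lambda-2)$ (Lemma~\ref{lem:rho-LB}). This global bound rules out any proper $k$-set spanning at least $\binom{k}{2}-\frac{\tau k}{\lambda}$ edges (Lemma~\ref{lem:no-exceptions}). The set $\{z,a_1,\dots,a_{k-1}\}$ above spans $\binom{k}{2}-1$ edges, so it is excluded in the counterexample, not in critical graphs generally. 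The bound then rules out any clique of order at least $(1-\frac{\tau}{\lambda})k$ in $\mathcal L$. That proof builds smaller instances, by adding edges $xx_i$ or deleting nodes from lists, and applies the induction hypothesis to them; this is why multigraphs and variable $\ell$ are needed.

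With large low cliques gone, the discharging is short. Non-low vertices take $\frac{\lambda-2}{k}$ along each edge, and Lemma~\ref{lem:GDP} gives the low vertices average charge at most $-1$. The thresholds $k\ge 47$ and $k\ge 352$ come from the single inequality $\tau-\frac{2\tau}{\lambda}-\frac{\lambda-2}{k}\ge 2$ used there.

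Your proposal has no substitute for this inductive global bound. Without it, local rules can credit a low $K_{k-1}$ whose outside neighbors are high vertices of degree about $k$ (each with many low neighbors) only about $\frac{1}{k}$ per vertex. This is essentially why Gallai-type counting yields only $\epsilon=\Theta(1/k)$.
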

From Theorem \ref{thm:intro}, we also deduce the following lower bounds on $f_{\ell}(n,k)$ and $f_{DP}(n,k)$.
\begin{theorem}
\label{thm:fnk}
    Let $k \geq 47$ and $n \geq k+2$ be integers. Then,
    \begin{enumerate}
        \item $f_{\ell}(n,k) \geq (k-1+\frac 1{28}) \frac n2 + \frac{k-1}{2}$, and $f_{DP}(n,k) \geq (k-1+\frac 1{28}) \frac n2 + \frac{k-1}{2}$;
        \item If $k \geq 352$,
        then $f_{\ell}(n,k) \geq (k-1+\frac 1{24}) \frac n2 + \frac{k-1}{2}$, and $f_{DP}(n,k) \geq (k-1+\frac 1{24}) \frac n2 + \frac{k-1}{2}$.
    \end{enumerate}
\end{theorem}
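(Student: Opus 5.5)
Theorem \ref{thm:fnk} is a direct consequence of Theorem \ref{thm:intro}, once we pass from list-critical graphs to correspondence-critical graphs. The plan has three steps.

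\textbf{Step 1 (DP case).} Let $G$ be a correspondence $k$-critical simple graph on $n \geq k+2$ vertices. Then $G \neq K_k$, since $K_k$ has only $k$ vertices. Theorem \ref{thm:intro} therefore gives $2|E(G)| \geq (k-1+\frac{1}{28})n + k-1$ when $k \geq 47$, and the same with $\frac{1}{24}$ when $k\geq 352$. Dividing by $2$ yields both bounds on $f_{DP}(n,k)$.

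\textbf{Step 2 (reduction from list to DP).} Let $G$ be a list $k$-critical graph on $n \geq k+2$ vertices. I would show that $G$ contains a correspondence $k$-critical subgraph $G'$ with $|E(G')| \leq |E(G)|$, and then apply Theorem \ref{thm:intro} to $G'$. The steps are as follows.
\begin{itemize}
\item A list assignment $L$ with no $L$-coloring can be encoded as a correspondence cover: join $(u,c)$ to $(v,c)$ for each edge $uv$ and each common color $c$. Colorings correspond exactly, so $G$ is not correspondence $(k-1)$-colorable.
\item Hence $G$ contains a minimal subgraph $G'$ that is not correspondence $(k-1)$-colorable. This $G'$ is correspondence $k$-critical.
\end{itemize}

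\textbf{Step 3 (the main obstacle).} The lower bound depends on $|V(G')|$, which may be smaller than $n$, so the reduction in Step 2 alone is not enough. I see two ways to handle this.

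\emph{First route: strengthen Theorem \ref{thm:intro}.} Check that its proof actually applies to any graph $G$ that is not $(k-1)$-list-colorable (or DP-colorable) while every proper subgraph is. This is the standard setting, since the proof presumably uses only minimum degree $k-1$ and a Gallai/GDP-forest structure on the low-degree vertices via Theorems \ref{thm:deg-choosable} and \ref{thm:GDP-char}. The list-critical analogues of these facts hold: minimum degree at least $k-1$ by criticality, and the low vertices induce a Gallai forest by Theorem \ref{thm:deg-choosable}. The argument would then transfer verbatim.

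\emph{Second route: handle the gap directly.} If $G' \neq G$, every vertex of $V(G)\setminus V(G')$ has degree at least $k-1$ in $G$. Every vertex of $G'$ also has at least its $G'$-degree in $G$. Since $G$ is connected, further edges of $G$ leave $V(G')$. A degree count of the form
\[
2|E(G)| \geq 2|E(G')| + (k-1)\bigl(n-|V(G')|\bigr)
\]
is short of the target by the $\frac{1}{28}$ term on the outside vertices. So this route needs the extra structural input that the vertices of degree exactly $k-1$ in $G$ form a Gallai forest, which gives the small surplus via the counting used in Theorem \ref{thm:intro}.

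I expect the first route to be the intended one. I would present the list bound by noting that the discharging proof of Theorem \ref{thm:intro} uses only properties shared by list-critical graphs.
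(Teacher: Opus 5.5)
Your Step 1 is the paper's argument for $f_{DP}$, and your encoding of $L$ as a cover matches the paper. You also correctly identify the real difficulty: the correspondence $k$-critical subgraph $G'$ may have fewer than $n$ vertices. However, neither route resolves it, so the list bounds are not proved.

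Route 1 is an unverified assertion, and it rests on an inaccurate description of the proof of Theorem \ref{thm:main}. That proof is a minimal-counterexample induction over correspondence-minimal pairs $(G,\ell)$ on multigraphs with arbitrary $\ell\le k-1$. Lemmas \ref{lem:gap} and \ref{lem:bd-UB} apply the induction hypothesis to minimal pairs $(G'',\ell_\phi)$ with shortened lists, and everything later depends on Lemma \ref{lem:rho-LB}. The low-clique section also uses Lemma \ref{lem:clique-cover} and modifies the cover itself by adding the matchings $M_i$. None of this is ``minimum degree plus Gallai forest'', and it does not transfer verbatim to list-critical graphs; you would have to build and re-verify a list analogue of the whole induction. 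Route 2, as you say yourself, falls short with a degree count.

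The missing idea is that no list version is needed. Theorem \ref{thm:main} holds for every $\ell$, with the reduced potentials $\ell(v)(\lambda+1)-(k-1)$, so the vertices outside $G'$ can be absorbed by gluing inside $G$ itself. This is your second route carried out with $\rho$ instead of degrees.

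Let $(H,L)$ be the cover encoding the bad list assignment, and let $S\subseteq V(G)$ be a largest set with $\rho(S)\le -k+1$. Such an $S$ exists because $\rho_G(V(G'))\le\rho_{G'}(V(G'))\le -k+1$ by Theorem \ref{thm:main}; here $G$ is $K_k$-free, being list $k$-critical with $n\ge k+2$. Suppose $S\ne V(G)$.
\begin{itemize}
\item List-criticality gives an $L$-coloring $\phi$ of $G[S]$.
\item Then $G-S$ has no $(H,L_\phi)$-coloring, so it contains a subgraph $G''$ with $(G'',\ell_\phi)$ minimal.
\item Theorem \ref{thm:main} gives $\rho_{G'',\ell_\phi}(V(G''))\le -k+1$.
\item Since $\ell_\phi(v)\ge k-1-\|v,S\|$, we have $\rho_\ell(v)\le \rho_{\ell_\phi}(v)+(\lambda+2)\|v,S\|$. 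Extra edges of $G$ only lower $\rho$.
\end{itemize}
So with $q=\|V(G''),S\|$,
\[
\rho(S\cup V(G''))\le 2(-k+1)+(\lambda+2)q-2\lambda q<-k+1,
\]
contradicting the maximality of $S$. Hence $\rho(G)\le -k+1$ with $|V(G)|=n$. The paper packages exactly this as Theorem \ref{thm:GHL-min}. It applies it to a minimal triple $(G,H'',L')$ obtained by deleting edges of the cover, so that $G$ and $n$ never change.

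Finally, $\rho(G)\le -k+1$ with $\ell\equiv k-1$ means $2\lambda|E(G)|\ge((k-1)\lambda+1)n+k-1$, i.e.\ $|E(G)|\ge(k-1+\frac1\lambda)\frac n2+\frac{k-1}{2\lambda}$. So the additive term this argument actually delivers, yours or the paper's, is $\frac{k-1}{2\lambda}$, not $\frac{k-1}{2}$. The ``$+k-1$'' you quote from Theorem \ref{thm:intro} has the same slip and should be $+\frac{k-1}{\lambda}$. The slope $k-1+\frac1\lambda$, and all asymptotic consequences, are unaffected.
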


\begin{table}[h!]
\centering
\begin{tabular}{|r||c|c|| c | c | c |} 
 \hline
$k$ & \cite{BCKX} & This paper & $k$ & \cite{BCKX} & This paper\\ 
 \hline
5 &  $\mathbf{1/6}$ &- & 75 &  $1/40$ & $\mathbf{1/28}$\\
10 &$\mathbf{1/8}$  & - & 100 &  $1/52$ & $\mathbf{1/28}$ \\
30 &  $\mathbf{1/17}$ & - & 200 & $1/102$ & $\mathbf{1/28}$ \\
50 &  $\mathbf{1/27}$ &$1/28$ & 300 &  $1/152$ & $\mathbf{1/28}$ \\
51 & $ \mathbf{1/28}$ &$\mathbf{1/28}$ & 352 &  $1/178$ & $\mathbf{1/24}$ \\
52 & $ \mathbf{1/28}$ &$\mathbf{1/28}$ & 500 & $1/252$ & $\mathbf{1/24}$ \\
53 &  $ 1/29$ &$\mathbf{1/28}$ & 1000 & $1/502$ & $\mathbf{1/24}$ \\
\hline
\end{tabular}
\caption{Lower bounds for $g_{\ell}(k)$ and $g_{DP}(k)$
for selected values of $k$
established by \cite{BCKX} and this paper. Currently best-known lower bounds are shown in bold.}
\label{table:1}
\end{table}

Together with Theorem \ref{thm:BCKX},
Theorem \ref{thm:fnk}
implies that $g_{\ell}(k)\geq \frac 1{28}$ and  
$g_{DP}(k)\geq \frac 1{28}$
for all $k \geq 4$.
When $k \geq 352$, Theorem \ref{thm:fnk} also implies that $g_{\ell}(k) \geq \frac 1{24}$ and $g_{DP}(k) \geq \frac 1{24}$.
Lower bounds on $g_{DP}(k)$ for some selected values of $k$ are shown in Table \ref{table:1}.
These two theorems also
also
imply the following improved version of Brooks' Theorem.
Given a graph $G$, we write $\mad(G) = \max_{\emptyset \neq F \subseteq G} \frac{2|E(F)|}{|V(F)|}$ for the maximum average degree of $G$.

\begin{theorem}
Let $d \geq 3$. If $G$ is a graph with no $K_{d+1}$ subgraph and $\mad(G) \leq d+\frac{1}{28}$, then $G$ is correspondence $d$-colorable. 
Furthermore, if $d \geq 351$ and $mad (G) \leq d+\frac{1}{24}$, then $G$ is correspondence $d$-colorable.
\end{theorem}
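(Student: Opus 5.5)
We derive the statement from Theorem \ref{thm:fnk} and Theorem \ref{thm:BCKX} by a minimal counterexample argument. Let $d \geq 3$ and suppose $G$ has no $K_{d+1}$ subgraph, $\mad(G) \leq d + \frac 1{28}$, and $G$ is not correspondence $d$-colorable. Removing edges and vertices one at a time while keeping non-colorability, we reach a subgraph $F \subseteq G$ that is not correspondence $d$-colorable but all of whose proper subgraphs are. So $F$ is correspondence $(d+1)$-critical. Set $k = d+1 \geq 4$ and $n = |V(F)|$. We have $F \neq K_k$, since $G$ contains no $K_{d+1}$. Also $n \neq k+1$, because every critical graph on $k+1$ vertices would have minimum degree at least $k-1$ and, by the correspondence form of Brooks' theorem (Theorem \ref{thm:GDP-char} applied to a $(k-1)$-regular graph), would contain $K_k$, which is not critical when it is a proper subgraph. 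Finally $n \geq k$, because criticality forces minimum degree at least $k-1$. Together these give $n \geq k+2$.

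The contradiction comes from showing that $2|E(F)|/n > d + \frac 1{28}$, which violates $\mad(G) \leq d+\frac 1{28}$ because $F \subseteq G$.
\begin{itemize}
\item For $k \geq 47$, Theorem \ref{thm:fnk}(1) gives
\[
2|E(F)| \geq \left(k-1+\tfrac 1{28}\right)n + k - 1 > \left(d+\tfrac1{28}\right)n .
\]
\item For $5 \leq k \leq 46$, Theorem \ref{thm:BCKX} gives $2|E(F)| \geq (k-1+c_k)n + 2c_k$ with $c_k = \lceil \frac{k^2-7}{2k-7}\rceil^{-1}$. One checks that $c_k \geq \frac 1{28}$ on this range, since $\frac{k^2-7}{2k-7} \leq 27$ for $k \leq 46$. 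The additive term $2c_k > 0$ then gives strict inequality.
\item For $k = 4$, Theorem \ref{thm:BCKX} needs $n \geq 11$. Then $2|E| \geq \frac{16}{5}n + \frac 25 > (3+\frac 1{28})n$.
\end{itemize}

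The case $k=4$ with $6 \leq n \leq 10$ is the one spot needing care. Here I would use Theorem \ref{thm:Dirac}-type reasoning in its list/correspondence form, which is available via Theorem \ref{thm:GDP-char} and Gallai's counting. Failing that, I would use $2|E| \geq 3n+1$, which comes from minimum degree $3$ plus a vertex of larger degree, since $F$ is not $3$-regular by Theorem \ref{thm:GDP-char}. This gives $2|E|/n \geq 3 + \frac 1n \geq 3 + \frac1{10} > 3+\frac 1{28}$.

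The second statement follows the same way. For $d \geq 351$ we have $k \geq 352$, and Theorem \ref{thm:fnk}(2) gives $2|E(F)| > (d+\frac1{24})n$.

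The main obstacle is the bookkeeping of small cases. We must confirm that the Theorem \ref{thm:BCKX} constant is at least $\frac1{28}$ for every $k \leq 46$, and we must handle small $n$ when $k=4$. The Brooks-type bound settles the latter because $\frac 1n$ exceeds $\frac 1{28}$ there.
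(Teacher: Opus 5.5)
Your route is the paper's: pass to a correspondence $(d+1)$-critical subgraph $F\neq K_{d+1}$, then apply Theorem~\ref{thm:fnk} for large $k$ and Theorem~\ref{thm:BCKX} for small $k$. Most of your extra bookkeeping is sound and goes beyond the paper's proof, which applies these bounds without checking that $|V(F)|$ lies in their stated range. In particular, the check $\lceil \frac{k^2-7}{2k-7}\rceil\le 27$ for $5\le k\le 46$ is correct. So is the bound $2|E(F)|\ge 3n+1$ for $k=4$, $n\le 10$, which follows from Theorem~\ref{thm:GDP-char} because a connected regular GDP-tree is a single block.

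The step that fails as written is $n\neq k+1$. Minimum degree at least $k-1$ on $k+1$ vertices only gives $F=K_{k+1}-M$ for some matching $M$. Such an $F$ need not be $(k-1)$-regular: the vertices outside $V(M)$ have degree $k$, and when $\Delta(F)=k$ the Brooks-type consequence of Theorem~\ref{thm:GDP-char} says nothing.

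Your fallback $2|E(F)|\ge (k-1)n+1$ gives $\frac{2|E(F)|}{n}\ge k-1+\frac{1}{k+1}$ when $n=k+1$. This beats $k-1+\frac1{28}$ only for $k\le 26$. For $27\le k\le 46$ it is too weak; for example, $K_{k+1}$ minus a near-perfect matching with $k$ even has exactly this density. Meanwhile Theorem~\ref{thm:BCKX} is stated only for $n\ge k+2$, so this case is genuinely uncovered.

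One way to close it is as follows.
\begin{itemize}
\item Since $F$ has no $K_k$, we have $m=|M|\ge 2$.
\item If $m=2$, then $\frac{2|E(F)|}{n}=k-\frac{4}{k+1}>k-1+\frac1{28}$ for every $k\ge 4$, so the density bound is violated.
\item If $m\ge 3$, take any cover with lists of size $k-1$. Greedily color the clique $F-V(M)$; it has $k+1-2m\le k-5$ vertices, each with $k-1$ colors. Each vertex of $V(M)$ then keeps at least $2m-2$ colors, which equals its degree in $F[V(M)]$. That graph is $K_{2m}$ minus a perfect matching with $2m\ge 6$, so it is $2$-connected and neither a clique nor a cycle. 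Theorem~\ref{thm:GDP-char} therefore extends the coloring, contradicting that $F$ is not correspondence $d$-colorable.
\end{itemize}
For $k\ge 47$, including the second statement, you can avoid the issue entirely by citing Theorem~\ref{thm:intro} in place of Theorem~\ref{thm:fnk}, since Theorem~\ref{thm:intro} places no restriction on $|V(F)|$.
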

\begin{proof}
    If $G$ is not correspondence $d$-colorable, then $G$ has a correspondence $(d+1)$-critical subgraph $G'$ which is not a clique.
    If $d \geq 351$, then Theorem \ref{thm:fnk}
    implies that $\mad(G) \geq \frac{2|E(G')|}{|V(G')|} \geq  \frac{2f_{DP}(|V(G')|,d+1)}{|V(G')|} > d + \frac{1}{24}$.
    If $50 \leq d \leq 350$, then Theorem \ref{thm:fnk}
    implies that $\mad(G) \geq \frac{2|E(G')|}{|V(G')|} \geq  \frac{2f_{DP}(|V(G')|,d+1)}{|V(G')|} > d + \frac{1}{28}$.
    If $3 \leq d \leq 49$, then Theorem \ref{thm:BCKX} implies that $\mad(G) \geq \frac{2|E(G')|}{|V(G')|} \geq \frac{2f_{DP}(|V(G')|,d+1)}{|V(G')|} > d + \frac{1}{28}$.
    In all cases, we have a contradiction.
\end{proof}

The values of $\frac 1{28}$ and $\frac 1{24}$ in Theorem \ref{thm:intro} are likely not optimal, and we make little effort to optimize these quantities.
It seems likely that $\lim_{k \rightarrow \infty} g_{\ell}(k) = \lim_{k \rightarrow \infty} g_{DP}(k) =1$,
but significant improvements to the value of $\frac 1{24}$
seem
to require more involved case analysis,
and even under the most ideal of circumstances, 
it seems unlikely that the method here can achieve better than $\frac 14$ without new ideas.

\subsection{Our approach}
In order to prove Theorem \ref{thm:intro},
we use the \emph{potential method}, which was popularized by Kostochka and Yancey \cite{2014KoYa}.
With this method, we define a \emph{potential function}, which gives a certain measure of graph density.
In its most basic form,
given a graph $G$,
a potential function assigns a value $\rho(v) = (k-1)\lambda + 1$ to each vertex $v \in V(G)$ and assigns $\rho(e) = -2\lambda$ to each $e \in E(G)$, 
where $k$ and $\lambda \in \{24,28\}$ are taken from the inequality in Theorem \ref{thm:intro} that we wish to prove.
In this way, $\rho(G):=\sum_{x \in V(G) \cup E(G)} \rho(x) = 0$ if and only if $2|E(G)| = (k - 1 + \frac 1{\lambda}) |V(G)|$, and the inequality that we wish to prove is equivalent to $\rho(G) \leq -k+1$.
We will consider a minimum counterexample $G$ for which $\rho(G) > -k+1$, and using the minimality of $G$, we will establish various properties of $G$ in terms of the potential function $\rho$.
We will find that working with the function $\rho$ is often much more convenient than working with edge densities $\frac{2|E(G)|}{|V(G)|}$, as we can easily compute how local changes in $G$ affect the value $\rho(G)$ without needing to count the overall number of vertices or edges in $G$.

Our potential function is closely related to the potential function of \cite{BCKX} but with a few important differences, which are discussed in Section \ref{sec:p-setup}.
In short, the bound of $\rho(G) \leq -k+1$ that we aim to prove for our potential function is much stronger than the corresponding bound of $\rho(G) \leq -2$ proven in \cite{BCKX}.
By using an inductive approach to prove this stronger bound, the stronger inductive hypothesis allows us to 
deduce much stronger properties of our counterexample $G$ than what is possible using the approach in \cite{BCKX},
which ultimately allows us to reach a stronger conclusion.

For technical reasons, we work with multigraphs instead of just simple graphs, and we also consider correspondence covers $(H,L)$ of $G$
for which the values $|L(v)|$ are 
not held constant at $k-1$ but are rather
given by some function $\ell:V(G) \rightarrow \{0,1,\dots,k-1\}$.
When we have vertices $v \in V(G)$ for which $\ell(v)< k-1$ or sets of parallel edges in $G$,
our potential function behaves
differently for those vertices and edges.
We will see that these framework relaxations allow for inductive arguments that would otherwise be difficult to carry out. 
A similar relaxed framework is used in \cite{BCKX}.

\subsection{Preliminaries}
Let $G$ be a multigraph.
Given disjoint vertex sets $U, U' \subseteq V(G)$, we write $E_G(U,U')$ for the set of edges with an endpoint in each of $U,U'$, and we write $\|U, U'\| = |E_G(U,U')|$.
When $U$ or $U'$ is a singleton set $\{v\}$, we often write $v$ instead of $\{v\}$.
The \emph{degree} of a vertex $v \in V(G)$, written $d_G(v)$ or just $d(v)$, is the number of edges in $G$ incident to $v$.
The \emph{neighborhood}
of a vertex $v \in V(G)$, written $N_G(v)$ or just $N(v)$,
is the set of vertices $u \in V(G)$ for which $\|u,v\| \geq 1$.
Note that $|N(v)| = d(v)$ if and only if $\|u,v\| = 1$ for every vertex $u \in N(v)$.

Let $G$ have a correspondence cover $(H,L)$.
Given a subset $S \subseteq V(G)$,
if $L'$ is obtained from $L$ by restricting its domain to $S$, and if $\phi$ is an $(H[\bigcup_{v \in S} L(v)], L')$-coloring of $G[S]$, then we often simply say that $\phi$ is an $(H,L)$-coloring of $G[S]$.
Similarly, if $L^*(v) \subseteq L(v)$ for each $v \in S$ and $\phi(v) \in L^*(v)$ for each $v \in S$,
then we often say that $\phi$ is an $(H,L^*)$-coloring of $G[S]$.

Given a subset $U\subseteq V(G)$, we write $\overline U = V(G) \setminus U$.

Given an integer $r \geq 1$, we write $rG$ for the multigraph obtained from $G$ by replacing each edge of $G$ with $r$ parallel edges.

\section{The potential function}
\subsection{Setup}
\label{sec:p-setup}
Our main approach is inductive with a \emph{potential function} similar to the one from \cite{BCKX}.
Let $G$ be a multigraph and $\ell:V(G) \rightarrow \mathbb N \cup \{0\}$.
An \emph{$\ell$-cover} of $G$ is a correspondence cover $(H,L)$ satisfying $|L(v)|= \ell(v)$ for each $v \in V(G)$.
We say that $(G,\ell)$ is \emph{minimal} if $G$ has an
$\ell$-cover
$(H,L)$ for which $G$ has no $(H,L)$-coloring, but for every proper subgraph $G'$ 
of $G$ and every $\ell$-cover $(H',L')$ of $G'$,
$G'$ has an $(H',L')$-coloring.
Note that when $\ell(v) = k-1$ for all $v \in V(G)$,
$(G,\ell)$ is minimal if and only if $G$ is correspondence $k$-critical.

Suppose a pair $(k,\lambda)$ is fixed and that $\ell(v) \leq k-1$ for each $v \in V(G)$.
We define a \emph{potential function} $\rho:V(G) \cup E(G) \rightarrow \mathbb Z$ as follows.
For each $v \in V(G)$, define 
\begin{equation}
\label{eqn:potential}
\rho(v) = 
\begin{cases}
    (k-1)\lambda+1 & \ell(v)=k-1 \\
    \ell(v)(\lambda+1)-(k-1) & \ell(v) \in [0,k-2].
\end{cases}
\end{equation}
We also write $F(v) = 1$ for each $v \in V(G)$ with $\ell(v) = k-1$ and $F(v) = 0$ for each $v \in V(G)$ with $\ell(v) \leq k-2$, so that $\rho(v) = \ell(v)(\lambda+1) - (k-1) + F(v)$ for each $v \in V(G)$.

Next, fix a total ordering on $E(G)$. Given an edge $e$ with 
endpoints $u,v$, if $e$ is the first edge in the total ordering that joins $u$ and $v$, then define $\rho(e) = -2\lambda$.
Otherwise, if $e$ is not the first edge joining $u$ and $v$, define $\rho(e) = -(2\lambda+\mu)$, where $\mu = 6$. 
We often use the symbol $\mu$ throughout the paper to clarify how certain expressions arise.
For each subset $S \subseteq V(G)$, we define 
\[\rho_{G,\ell}(S) = \sum_{v \in S} \rho(v) + \sum_{e \in G[S]} \rho(e).\]
We often omit $G$ or $\ell$ from the subscript when doing so creates no ambiguity.
We also often write $\rho(G) = \rho_{G,\ell}(V(G))$.



In order to prove Theorem \ref{thm:intro}, we prove the following stronger statement.
\begin{theorem}
\label{thm:main}
    Let $(k,\lambda)$ be a pair for which
    $k \geq 47$,
    $\lambda \in \{24,28\}$, and $k \geq 352$ whenever $\lambda=24$.
    Let $G$ be a multigraph, and let $\ell:V(G) \rightarrow \{0,1,\dots,k-1\}$.
    If $(G,\ell)$ is minimal, then $\rho(G) \leq -k+1$ or $G$ is a $K_k$.
\end{theorem}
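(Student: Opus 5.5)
We argue by taking a counterexample $(G,\ell)$ to Theorem~\ref{thm:main}: $(G,\ell)$ is minimal, $G \neq K_k$, and $\rho(G) > -k+1$. Among all such counterexamples we choose one with $|V(G)|+|E(G)|$ minimum. Fix an $\ell$-cover $(H,L)$ of $G$ with no $(H,L)$-coloring.

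\textbf{Basic structure.} Minimality gives the standard facts:
\begin{itemize}
\item $G$ is connected.
\item $d(v) \geq \ell(v)$ for every $v$. Otherwise we $(H,L)$-color $G-v$ and extend greedily to $v$.
\item Whenever $\ell(v)\le k-2$, removing $v$ (or the offending edges) and pushing the constraint to the neighbors gives a smaller pair.
\end{itemize}
The values $\rho(v)=\ell(v)(\lambda+1)-(k-1)$ are calibrated so that a vertex with small list is ``cheap''. We check small base cases directly: single vertices, with $\rho=-(k-1)$ when $\ell=0$, and $K_k$-like configurations. We also check that $\rho(K_k)$ with $\ell\equiv k-1$ is $k((k-1)\lambda+1)-\binom k2 2\lambda = k$. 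This is why $K_k$ is excluded.

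\textbf{Gap lemma.} The central step is a lemma of the form: for every $\emptyset\ne S\subsetneq V(G)$, $\rho(S)$ is large, say $\rho(S)\ge \rho(G)+ c$ for a suitable $c$, unless $S$ is one of a few special sets (single vertices, near-cliques). To prove it, we take $S$ with $\rho(S)$ minimal. By minimality of $G$, $G[S]$ is $(H,L)$-colorable; fix a coloring $\phi$ of $G[S]$. We then form the auxiliary pair $(G',\ell')$ on $\overline S$, with $\ell'(v)=\ell(v)-\|v,S\|$ realized by deleting the nodes blocked by $\phi$. Equivalently, we contract $S$ into a gadget (a vertex or small clique) whose list size reflects the colorings. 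Then $G'$ is not colorable and contains a minimal subpair. Applying induction to that subpair, together with the identity $\rho_{G}(V) = \rho(S) + \rho_{G'}(\overline S) + (\text{boundary terms})$, gives the bound. The parallel-edge penalty $\mu=6$ and the $F(v)$ term are exactly what make the boundary bookkeeping come out in our favor.

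\textbf{Low vertices and discharging.} With the gap lemma in hand:
\begin{itemize}
\item Every vertex has $\ell(v)=k-1$. A vertex with smaller list and the right degree could be absorbed, contradicting the gap lemma.
\item $G$ is simple, since parallel edges carry the extra $-\mu$.
\item $\delta(G)\ge k-1$.
\end{itemize}
Let $\mathcal L$ be the set of vertices of degree $k-1$. Using Theorem~\ref{thm:GDP-char}, each component of $G[\mathcal L]$ is a GDP-tree. The gap lemma then forbids large cliques in $\mathcal L$ (e.g.\ $K_{k-1}$ or $K_{k-2}$ blocks) and other dense low-degree clusters. If a high-degree vertex is adjacent to too many low vertices, we recolor: the low vertices have reducible configurations, which we show by a Kernel-lemma or counting argument on the colorings of a star around a high vertex.

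Finally, we discharge. Each vertex starts with charge $d(v)-(k-1)-\frac1\lambda$, and $\rho(G)>-k+1$ translates into total charge less than $(k-1)/\lambda$ (roughly). Each vertex of degree at least $k$ sends charge of about $\frac{1}{\lambda}\cdot\frac{c}{d}$ to adjacent low vertices. The constraints $\lambda\in\{24,28\}$ with $k\ge 47$ or $k\ge 352$ enter here: they ensure each low vertex, lying in a small Gallai block, receives enough charge.

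\textbf{Main obstacle.} The hardest step is the gap lemma. The contracted pair $(G',\ell')$ may fail to be minimal, or may itself be a $K_k$. This forces a careful case split: we show that when the residual minimal piece is $K_k$, the set $S$ together with that clique gives a near-$K_k$ structure that is contradicted directly by an explicit coloring. I would first try choosing $S$ of minimum potential and maximal size, and handle the $K_k$ case by exploiting the $+1$ from $F(v)$ on each clique vertex.
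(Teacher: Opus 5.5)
Your outline has a genuine gap at the step where you assert that a minimum counterexample has $\ell(v)=k-1$ everywhere, is simple, and has $\delta(G)\ge k-1$. Nothing in your sketch justifies this, and the paper does not attempt it. The theorem is stated for multigraphs and arbitrary $\ell$ precisely because the induction is applied to pairs $(G'',\ell_\phi)$ with reduced lists, so a minimum counterexample may well contain vertices with $\ell(v)\le k-2$ and parallel edges. Three specific problems:
\begin{itemize}
\item The boundary bound only forces $\ell(v)\gtrsim k/3$ for such vertices.
\item The extra $-\mu$ makes a parallel edge cheap in the count, not absent.
\item ``Absorbing'' a small-list vertex into a gadget, or deleting it and ``pushing constraints'' to its neighbors, either enlarges the graph or does not preserve non-colorability, so minimality cannot be invoked.
\end{itemize}
The paper instead defines low vertices by $d(v)=\ell(v)$ and handles blocks $rK_q$, $rC_q$ through the extra $\frac13\mu$ charge in (R2). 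It then closes with a case split on $\max_v\ell(v)\in\{k-1,\,k-2,\,\le k-3\}$. Also, the ``main obstacle'' you name is not one. A residual $K_k$ can never occur, since any proper $K_k$ subgraph contradicts minimality directly (Lemma~\ref{lem:Kk-free}), so induction always gives $\rho\le -k+1$ for the residual piece.

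The more serious missing idea is the right form of the gap lemma and its partner. The paper proves, for every nonempty $S\subsetneq V(G)$ with no exceptional sets, both $\rho(S)\ge(\lambda-2)\|S,\overline S\|+1$ and $\rho(S)\le(\lambda+2)\|S,\overline S\|-k+1$ (Lemma~\ref{lem:bd-UB}). The second comes from coloring $\overline S$ and using the strong hypothesis $-k+1$ on a minimal piece inside $S$. Together they give $\|S,\overline S\|\ge k/4$ and $\rho(S)\ge\frac k4(\lambda-2)+1$ for every such $S$ (Lemma~\ref{lem:rho-LB}). This linear-in-$k$ bound drives the rest; your ``$\rho(S)\ge\rho(G)+c$ unless special sets'' does not yield it.

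In particular, the discharging needs every clique in $\mathcal L$ to have fewer than $(1-\frac{\tau}{\lambda})k\approx 0.9k$ vertices. A low vertex in a $K_q$ block gains only about $\frac{\lambda-2}{k}(k-q)$ through its edges to high vertices, and this must be roughly $2$. So excluding $K_{k-1}$ or $K_{k-2}$ blocks is far too weak. Excluding cliques down to $0.9k$ takes a real reducibility argument:
\begin{itemize}
\item show $G[K]$ is a simple $K_t$;
\item show the outside neighborhoods $B(v)$ are not all equal, by a $\sigma$-count when $B\cup K$ is a clique and by an explicit greedy coloring finished with Theorem~\ref{thm:GDP-char} otherwise;
\item use the structure of non-colorable $(t-1)$-covers of $K_t$ (Lemma~\ref{lem:clique-cover}) to either add edges $xx_i$ to $G-K$ or delete a few nodes from lists on $G-K$.
\end{itemize}
This produces a smaller non-colorable instance containing a set $U$ with $\rho(U)<\frac k4(\lambda-2)$, contradicting Lemma~\ref{lem:rho-LB}. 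Your ``Kernel-lemma or counting argument on a star'' does not supply any of this.
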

Theorem \ref{thm:intro} follows from Theorem \ref{thm:main}
by letting $G$ be simple and letting $\ell(v)=k-1$
for each $v \in V(G)$.

We note that 
our Theorem \ref{thm:main}
has a similar form to Theorem 2.1 of \cite{BCKX}, which is essentially a reformulation of Theorem \ref{thm:BCKX} using a potential function.
However,
our upper bound of $-k+1$
is significantly less than the upper bound of $-2$ in Theorem 2.1 of \cite{BCKX}.
Our value $-k+1$
gives a much stronger induction hypothesis
that leads to arguments which are incompatible with the inductive framework of \cite{BCKX}.
In particular, our Lemma \ref{lem:rho-LB}
shows that $\rho(S) > \frac k4 (\lambda-2)$ for every nonempty $S \subsetneq V(G)$,
which is a powerful fact with no analogue in \cite{BCKX}.

We also note that our potential function is similar to that of \cite{BCKX}, with a few differences. 
First, 
the potential function of \cite{BCKX}
assigns $\rho(v) = \ell(v) \lambda - 1$ when $\ell(v) \in [2,k-2]$
and $\rho(v) = \ell(v) \lambda - 2$ when $\ell(v) \in [0,1]$;
thus, our values $\rho(v)$ are lower than those of \cite{BCKX} whenever $\ell(v) \leq k-3$.
By assigning a lower potential to vertices $v$ with smaller list sizes $\ell(v)$,
we prevent edge cases
which would cause the stronger upper bound in Theorem \ref{thm:main} to be false.
For instance, if $G$ is a $K_2$ and $\ell(v)=1$ for each $v \in V(G)$, then $(G,\ell)$ is minimal.
In our current framework with reduced potentials, we have $\rho(G) = 2(\lambda - k + 2) - 2\lambda < -k+1$.
However, in the framework of \cite{BCKX},
we have $\rho(v) = \lambda-2$ for each $v \in V(G)$, so that $\rho(G) = -4$.
Thus, these reduced potentials are necessary to allow the bound of $-k+1$ in Theorem \ref{thm:main},
which in turn allows stronger inductive tools as discussed above.
Another difference is that while \cite{BCKX} assigns $\rho(e) = -(2\lambda+1)$ to edges $e$ that are not the ``first'' in a set of parallel edges, we assign $\rho(e)=-(2\lambda+6)$.
This difference is mainly to allow for certain computations to work more smoothly, avoid case analysis, and also to ensure that the dipole $(k-1)K_2$ does not violate Theorem \ref{thm:main} when paired with a function that assigns $\ell(v) = k-1$ to each vertex $v$.

\subsection{Properties of a minimum counterexample}
In order to prove Theorem \ref{thm:main} for an appropriate pair $(k,\lambda)$,
we 
consider a counterexample  $(G,\ell)$ for which $|V(G)| $ is minimized, and subject to this, $|E(G)|$ is minimized.
As $(G,\ell)$ is minimal, $G$ is connected.
We fix an $\ell$-cover $(H,L)$ of $G$ for which $G$ has no $(H,L)$-coloring.
We also fix a value 
$\tau = \frac 18 (\lambda-2) - \frac 12$.
Thus,
$\tau = \frac 94$ when $\lambda = 24$, and $\tau = \frac{11}{4}$ when $\lambda = 28$.
The goal of this subsection is
to establish some initial properties of $(G,\ell)$, and in particular, 
to establish bounds on values $\rho(S)$ for certain subsets $S \subseteq V(G)$.

\begin{lemma}
\label{lem:Kk-free}
$G$ has no $K_k$ subgraph.
\end{lemma}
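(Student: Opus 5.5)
The argument is a direct pigeonhole argument combined with the minimality of $(G,\ell)$; it does not use the potential function at all. Suppose for contradiction that $G$ contains a $K_k$ subgraph, and let $K$ be such a subgraph: its vertex set $S$ has $|S| = k$, and it contains exactly one edge of $G$ between each pair of vertices in $S$ (for a pair joined by parallel edges, pick one). The plan is to show that $K$, with $\ell$ restricted to $S$, has an $\ell$-cover with no coloring. Minimality then forces $K = G$, so $G$ is a $K_k$, which is not a counterexample to Theorem~\ref{thm:main}.

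\emph{Step 1: an uncolorable $\ell|_S$-cover of $K$.} Since $\ell(v) \le k-1$ for every $v$, we can imitate ordinary list coloring with lists drawn from a common palette of size $k-1$. For each $v \in S$, let $L_K(v) = \{(v,c) : 1 \le c \le \ell(v)\}$, so that $|L_K(v)| = \ell(v)$. For each edge $uv$ of $K$, join $(u,c)$ to $(v,c)$ for every $c \le \min\{\ell(u),\ell(v)\}$. This is a matching between $L_K(u)$ and $L_K(v)$, so $(H_K, L_K)$ is an $\ell|_S$-cover of $K$.

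An $(H_K,L_K)$-coloring would assign to each of the $k$ pairwise adjacent vertices of $S$ a color $c \in \{1,\dots,k-1\}$, and adjacent vertices would need distinct colors. This is impossible by pigeonhole. If some $\ell(v) = 0$, the list $L_K(v)$ is empty and there is trivially no coloring.

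\emph{Step 2: apply minimality.} By the definition of minimality, every proper subgraph $G'$ of $G$ has an $(H',L')$-coloring for every $\ell$-cover $(H',L')$ of $G'$. Since $K$ has no $(H_K,L_K)$-coloring, $K$ cannot be a proper subgraph of $G$, so $K = G$. Hence $V(G) = S$ and $G$ has exactly one edge between each pair of vertices, so $G$ is the simple graph $K_k$. This contradicts the assumption that $(G,\ell)$ is a counterexample, since Theorem~\ref{thm:main} explicitly allows $G$ to be $K_k$.

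The only points requiring care, rather than any real obstacle, are two. First, the cover in Step 1 must be a legitimate $\ell$-cover: it must have list sizes exactly $\ell(v)$, the lists must be independent and partition the node set, and each edge must give a matching. Second, $K$ must be chosen as a subgraph with a single edge per pair, so that $K = G$ really means that $G$ is simple.
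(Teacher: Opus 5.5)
Your proof is correct and follows the paper's argument. The paper also rules out $G=K_k$ as a counterexample and observes that a proper $K_k$ subgraph would admit an uncolorable $\ell$-cover (since $\ell(v)\le k-1$), contradicting minimality. You additionally spell out that cover explicitly (a common palette $\{1,\dots,k-1\}$ with identity matchings) and handle parallel edges by choosing one edge per pair, both of which the paper leaves implicit.
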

\begin{proof}
    If $G = K_k$, then $G$ is not a counterexample to Theorem \ref{thm:main}.
    Otherwise, suppose that $G$ has a proper $K_k$ subgraph, which we call $K$.
    As $\ell(v) \leq k-1$ for each $v \in V(K)$, there exists an $\ell$-cover $(H',L')$ of $K$
    for which $K$ has no $(H',L')$-coloring, contradicting the minimality of $(G,\ell)$.
\end{proof}

As $(G,\ell)$ is minimal, it follows that for every proper subset $S \subsetneq V(G)$, 
$G[S]$ has an $(H,L)$-coloring. 
Given an $(H,L)$-coloring $\phi$ of $G[S]$,
for each $v \in \overline S$,
we define $L_{\phi}(v) = L(v) \setminus N_H(\phi)$ and $\ell_{\phi}(v) = |L_{\phi}(v)|$.
We make the following observation.

\begin{lemma}
\label{lem:partial}
    Let $S \subsetneq V(G)$, and let $\phi$ be an $(H,L)$-coloring of $G[S]$.
    Then, $G[\overline S]$ has no $(H,L_{\phi})$-coloring.
\end{lemma}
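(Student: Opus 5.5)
The plan is a direct proof by contradiction: we show that an $(H,L_{\phi})$-coloring of $G[\overline S]$ would combine with $\phi$ into an $(H,L)$-coloring of all of $G$, which contradicts our choice of $(H,L)$.

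Suppose, for contradiction, that $\psi$ is an $(H,L_{\phi})$-coloring of $G[\overline S]$. Define $\Phi:V(G)\to V(H)$ by setting $\Phi(v)=\phi(v)$ for $v\in S$ and $\Phi(v)=\psi(v)$ for $v\in\overline S$. For each $v\in\overline S$ we have $\psi(v)\in L_{\phi}(v)\subseteq L(v)$, so $\Phi(v)\in L(v)$ for every vertex $v$.

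It remains to check that the image of $\Phi$ is independent in $H$. Take two distinct nodes in the image.
\begin{itemize}
\item If both are of the form $\phi(u),\phi(w)$ with $u,w\in S$, they are nonadjacent because $\phi$ is an $(H,L)$-coloring of $G[S]$. Here we use that the relevant subcover is exactly $H$ restricted to the lists of $S$, so adjacency in $H$ between these nodes is the same as in that subcover.
\item If both are values of $\psi$, they are nonadjacent for the same reason, since $\psi$ colors $G[\overline S]$ from $H$.
\item If one is $\phi(u)$ with $u\in S$ and the other is $\psi(v)$ with $v\in\overline S$, then $\psi(v)\in L_{\phi}(v)=L(v)\setminus N_H(\phi)$. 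Hence $\psi(v)$ is not adjacent to any node of $\phi$, and in particular not to $\phi(u)$.
\end{itemize}
So $\Phi$ is an $(H,L)$-coloring of $G$, contradicting the fact that $(H,L)$ was fixed so that $G$ has no $(H,L)$-coloring.

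There is no real obstacle here. The only point needing care is the identification of the restricted covers with $H$, which follows from the paper's convention that an $(H,L)$-coloring of $G[S]$ means a coloring with respect to $H[\bigcup_{v\in S}L(v)]$.
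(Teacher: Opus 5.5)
Your proposal is correct and is the same argument as the paper's: assume an $(H,L_{\phi})$-coloring $\psi$ of $G[\overline S]$ exists, observe that $\phi \cup \psi$ is then an $(H,L)$-coloring of $G$, and contradict the choice of $(H,L)$. Your case-by-case check of independence, including the use of $L_{\phi}(v) = L(v)\setminus N_H(\phi)$ for pairs split between $S$ and $\overline S$, just spells out the details the paper leaves implicit.
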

\begin{proof}
    Suppose that $G[\overline S]$ has an $(H,L_{\phi})$-coloring $\psi$.
    Then, the union $\phi \cup \psi$ is an $(H,L)$-coloring of $G$, contradicting our choice of $(G,H,L)$.
\end{proof}

Lemma \ref{lem:partial} has the following immediate corollary.

\begin{lemma}
\label{lem:dl}
    For every $v \in V(G)$, $d(v) \geq \ell(v)$.
\end{lemma}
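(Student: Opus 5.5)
We prove Lemma \ref{lem:dl} by applying Lemma \ref{lem:partial} with $S = V(G)\setminus\{v\}$ and counting how many nodes of $L(v)$ a coloring of $G - v$ can block. Suppose for contradiction that some vertex $v \in V(G)$ satisfies $d(v) < \ell(v)$.

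First we need $S \neq \emptyset$, so that $S$ is a proper subset on which Lemma \ref{lem:partial} applies. If $V(G)=\{v\}$, then $d(v)=0<\ell(v)$, so $L(v)$ is nonempty. Any node of $L(v)$ then gives an $(H,L)$-coloring of $G$, a contradiction. So we may assume $|V(G)|\ge 2$.

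Since $S \subsetneq V(G)$, minimality of $(G,\ell)$ gives an $(H,L)$-coloring $\phi$ of $G[S]$, as noted before Lemma \ref{lem:partial}. Next we bound $\ell_\phi(v)$. For each neighbor $u$ of $v$, the subgraph $H[L(u)\cup L(v)]$ is a union of $\|u,v\|$ matchings. Hence the node $\phi(u)$ has at most $\|u,v\|$ neighbors in $L(v)$. Summing over all $u \in N(v)$, the set $N_H(\phi)\cap L(v)$ has size at most $\sum_{u} \|u,v\| = d(v)$. Therefore
\[
\ell_\phi(v) \ge \ell(v) - d(v) \ge 1 .
\]

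Now $\overline S = \{v\}$, and choosing any node of $L_\phi(v)$ gives an $(H,L_\phi)$-coloring of $G[\{v\}]$. This contradicts Lemma \ref{lem:partial}, so $d(v) \ge \ell(v)$ for every $v \in V(G)$.

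The argument is routine; the only point needing care is that the bound counts parallel edges. Each of the $\|u,v\|$ parallel edges between $u$ and $v$ contributes its own matching, so $\phi(u)$ can block up to $\|u,v\|$ nodes of $L(v)$. That is why the bound is in terms of $d(v)$ rather than $|N(v)|$.
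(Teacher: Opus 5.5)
Your proof is correct and matches the paper, which states Lemma \ref{lem:dl} as an immediate corollary of Lemma \ref{lem:partial} applied with $S = V(G)\setminus\{v\}$; your count showing that a coloring of $G-v$ removes at most $d(v)$ nodes from $L(v)$ is exactly the detail the paper omits. Your separate treatment of $|V(G)|=1$ is harmless but unnecessary, since Lemma \ref{lem:partial} also applies with $S=\emptyset$.
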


Our next lemma is borrowed from 
\cite{BCKX}
and shows that for each proper subset $S \subsetneq V(G)$, 
$\rho(S)$ has a lower bound that is proportional to the number of edges joining $S$ with its complement.
The original version of this lemma (\cite[Lemma 4.5]{BCKX})
is proven for a slightly different potential framework, so we include a proof for completeness.
\begin{lemma}
\label{lem:gap}
    If 
    $S \subsetneq V(G)$ is nonempty, then $\rho(S) \geq (\lambda-2)\|S,\overline S\|+1$.
\end{lemma}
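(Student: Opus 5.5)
The plan is to color $S$, push the obstruction onto $\overline S$, and apply the minimality of the counterexample $(G,\ell)$ to the residual problem on $G[\overline S]$. Since $S \subsetneq V(G)$, minimality of $(G,\ell)$ gives an $(H,L)$-coloring $\phi$ of $G[S]$. By Lemma \ref{lem:partial}, $G[\overline S]$ has no $(H,L_\phi)$-coloring, so $G[\overline S]$ contains a subgraph $G'$ with vertex set $W \subseteq \overline S$ such that $(G',\ell_\phi|_W)$ is minimal.

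\textbf{Step 1: $G' = G[\overline S]$.} Suppose not. Then $\phi$ together with the $(H,L)$-colorless $G'$ shows that the proper subgraph $G[S] \cup G'$ of $G$ has no $(H,L)$-coloring. Indeed, any coloring of it would restrict on $W$ to an $(H,L_\phi)$-coloring of $G'$, since $\phi$ is fixed on $S$. This contradicts the minimality of $(G,\ell)$. We may assume $\ell_\phi$ is taken exactly as $|L_\phi(v)|$; the restricted cover is then an $\ell_\phi$-cover.

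\textbf{Step 2: potential comparison.} Each edge between $v \in \overline S$ and $S$ deletes at most one node of $L(v)$. Hence $\ell_\phi(v) \geq \ell(v) - \|v,S\|$. From \eqref{eqn:potential}, one unit decrease of $\ell$ lowers $\rho(v)$ by $\lambda+1$, except the step $k-1 \to k-2$, which lowers it by $\lambda+2$. So
\[
\rho_{\ell}(v) - \rho_{\ell_\phi}(v) \leq (\lambda+2)\|v,S\|
\]
for every $v \in \overline S$, with equality possible only when colors are actually lost. Every edge of $E(S,\overline S)$ contributes at most $-2\lambda$ to $\rho(G)$. Therefore
\[
\rho_{G,\ell}(V(G)) \leq \rho(S) + \rho_{G[\overline S],\ell_\phi}(\overline S) + (\lambda+2)\|S,\overline S\| - 2\lambda \|S,\overline S\| .
\]
The right-hand side equals $\rho(S) + \rho_{\ell_\phi}(\overline S) - (\lambda-2)\|S,\overline S\|$.

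\textbf{Step 3: induction.} Since $S \neq \emptyset$, the pair $(G[\overline S],\ell_\phi)$ has fewer vertices than $G$, and it is minimal. So either $\rho_{\ell_\phi}(\overline S) \leq -k+1$, or $G[\overline S] = K_k$. In the first case, $-k+1 < \rho(G) \leq \rho(S) - k + 1 - (\lambda-2)\|S,\overline S\|$. Integrality then gives $\rho(S) \geq (\lambda-2)\|S,\overline S\| + 1$.

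\textbf{Main obstacle: the $K_k$ case.} This is the step I expect to be hardest. Here $\rho_{\ell_\phi}(\overline S) \leq k\big((k-1)\lambda+1\big) - \binom k2 2\lambda = k$, so the crude bound is too weak by about $2k$ and I must recover slack. Since $G$ is connected and $S \neq \emptyset$, some vertices of $\overline S$ send edges to $S$. For each such vertex $v$ I split into two cases.

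\begin{itemize}
\item If $\ell_\phi(v) = \ell(v)$, then no color was lost. The potential loss in Step 2 is $0$ rather than up to $(\lambda+2)\|v,S\|$, which gains at least $\lambda+2 \geq k$-independent slack; since $\lambda \geq 24$ this is not enough alone. The gain is therefore better used by noting that then $\ell(v) \leq k-1$ while $d_G(v) \geq k$.
\item If $\ell_\phi(v) \leq k-2$, then $\rho_{\ell_\phi}(K_k)$ drops below $k$ by at least $\lambda+2$ per such vertex.
\end{itemize}

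I would combine these with Lemma \ref{lem:Kk-free}: $G$ has no $K_k$ subgraph, so $G[\overline S] = K_k$ is impossible outright. This makes the $K_k$ case vacuous, and the argument closes with Step 3. If that shortcut fails because of the multigraph or $\ell_\phi$ subtleties, I would fall back on the explicit slack count above.
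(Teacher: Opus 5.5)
Your Step~1 is false, and the argument for it does not work. Consider the proper subgraph formed by $G[S]$, $G'$ and the edges between $S$ and $W$. A coloring of it may color $S$ differently from $\phi$; only the particular coloring $\phi$ fails to extend to $W$. In fact, by the minimality of $(G,\ell)$, that proper subgraph \emph{is} $(H,L)$-colorable, so no contradiction arises.

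Your reasoning uses nothing beyond the minimality of $(G,\ell)$, so it would prove a false statement. Minimal pairs are connected, so $(G[\overline S],\ell_\phi)$ is never minimal when $G[\overline S]$ is disconnected. For instance, take the path $uvw$ with $\ell(u)=\ell(w)=1$, $\ell(v)=2$ and an uncolorable $\ell$-cover. With $S=\{v\}$, the only obstruction left is a single vertex whose list became empty.

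So all you may conclude is the following: there is a nonempty $U\subseteq \overline S$ and a spanning subgraph of $G[U]$ that is minimal with respect to $\ell_\phi$, whence $\rho_{\ell_\phi}(U)\le -k+1$. The $K_k$ alternative is excluded by Lemma~\ref{lem:Kk-free}, as you eventually note, so that is not the real obstacle. Your Steps~2 and~3 then settle only the case $U=\overline S$.

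The missing idea is an extremal choice of $S$, which handles the case $S':=S\cup U\subsetneq V(G)$. Take $S$ to be a counterexample to the lemma with $|S|$ maximum, and let $j=\|S,\overline S\|$ and $q=\|S,U\|$. Your Step~2 computation, applied to $U$ instead of $\overline S$, gives $\rho(S')\le \rho(S)-k+1-(\lambda-2)q\le -k+1+(\lambda-2)(j-q)$.

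\begin{itemize}
\item If $S'=V(G)$, then $q=j$ and $\rho(G)\le -k+1$, contradicting the choice of $(G,\ell)$.
\item Otherwise, $S'$ is a nonempty proper subset strictly larger than $S$, so the lemma holds for it. The $j-q$ edges from $S$ to $\overline S\setminus U$ leave $S'$, so $\rho(S')\ge (\lambda-2)(j-q)+1$, again a contradiction.
\end{itemize}

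This is the argument in the paper.
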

\begin{proof}
    Suppose that the lemma is false, and let $S$ be the largest set for which the lemma does not hold.
    As $S$ is a counterexample to the lemma, $\rho(S) \leq (\lambda-2) \|S,\overline S\|$.
    
    Let $G' = G - S$.
    As $S \subsetneq V(G)$, it follows from the minimality of $(G,\ell)$ that $G[S]$ has an $(H,L)$-coloring $\phi$.
    Then, by Lemma \ref{lem:partial},
    $G'$ has no $(H,L_{\phi})$-coloring.
    Thus,
    there is a subset $U \subseteq V(G')$ and a spanning subgraph $G'' \subseteq G[U]$ such that
    $(G'',\ell_{\phi})$ is minimal.
    %
    As $(G,\ell)$ is a minimum counterexample,
    and as $G$ has no $K_k$ subgraph by Lemma \ref{lem:Kk-free},
    it follows that
    $\rho_{\ell_{\phi}}(U) \leq -k+1$.

Now, consider the set $S':=U \cup S$, and write $q = \|S,U\|$.
As $(G,\ell)$ is minimal and $U \subsetneq V(G)$,
it follows that $\ell$ and $\ell_{\phi}$ are not identical on $U$;
therefore, $q \geq 1$. 
 As $\rho_{\ell}(v)-\rho_{\ell_{\phi}}(v)\leq (\lambda+2)\|v, S\|$ for each $v\in U$, 
 it follows that
$\rho_{G,\ell}(U) \leq -k+1 + q (\lambda + 2)$.
Therefore,
\begin{equation}
\label{eqn:j(k-2)}
\rho_{\ell}(S') \leq 
\rho_{\ell}(U) + \rho_{\ell}(S) - 2\lambda q \leq -k+1 + q(\lambda + 2) - q(2\lambda) + \rho_{\ell}(S) = -k+1 - q(\lambda - 2) + \rho_{\ell}(S).
\end{equation}


    Now, write $\|S,\overline S\| = j \geq 1$. Then,
    as $\rho_{\ell}(S) \leq j(\lambda -2)$,
    (\ref{eqn:j(k-2)}) implies that 
    \begin{equation}
    \label{eqn:S'}
    \rho_{\ell}(S')     \leq -k+1  - q (\lambda -2) + j(\lambda -2) 
    =-k+1 + (j - q)(\lambda -2).
    \end{equation}
    If $S' = V(G)$, then $q=j$, so~\eqref{eqn:S'} implies that $\rho(G) \leq -k+1$, and $G$ is not a counterexample to Theorem~\ref{thm:main}.
    Therefore, $\|S', \overline{S'}\| \geq 1$, so the 
    maximality of $|S|$ implies that  $\rho_{\ell}(S') \geq \lambda -1$. Hence, $q< j$ by~\eqref{eqn:S'}. 
    Since $\|S,U\| = q$ and $\|S, \overline S\| \geq j$, it follows that $\|S', \overline{S'}\| \geq \|S, V(G') \setminus U\| \geq j - q \geq 1$.
    Then, the maximality of $|S|$
     tells us that $\rho_{\ell}(S') > (j-q)(\lambda -2)$, contradicting (\ref{eqn:S'}). 
\end{proof}

Lemma \ref{lem:gap} has the following corollary.

\begin{lemma}
\label{lem:deg-UB}
    Each vertex $v \in V(G)$ satisfies $d(v) \leq \frac{\lambda}{\lambda-2}\ell(v)$.
\end{lemma}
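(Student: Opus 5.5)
The plan is to apply Lemma \ref{lem:gap} to the singleton $S=\{v\}$ and compare the resulting lower bound on $\rho(\{v\})$ with the explicit formula \eqref{eqn:potential}.

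First, the degenerate case. If $|V(G)|=1$, then $G$ has no loops, so $d(v)=0$. The inequality $0 \le \frac{\lambda}{\lambda-2}\ell(v)$ then holds trivially, since $\ell(v)\ge 0$.

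Now assume $|V(G)|\ge 2$, so $S=\{v\}$ is a nonempty proper subset of $V(G)$. Since $G[\{v\}]$ contains no edges, $\rho(\{v\})=\rho(v)$ and $\|\{v\},\overline{\{v\}}\| = d(v)$. Lemma \ref{lem:gap} therefore gives
\[
\rho(v) \ge (\lambda-2)\,d(v) + 1 .
\]

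It remains to show $\rho(v)\le \lambda\ell(v)+1$. We split according to \eqref{eqn:potential}.
\begin{itemize}
\item If $\ell(v)=k-1$, then $\rho(v)=(k-1)\lambda+1=\lambda\ell(v)+1$ exactly.
\item If $\ell(v)\le k-2$, then $\rho(v)=\ell(v)(\lambda+1)-(k-1)=\lambda\ell(v)+\bigl(\ell(v)-(k-1)\bigr)\le \lambda\ell(v)-1$.
\end{itemize}

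Combining this with the bound from Lemma \ref{lem:gap}, in both cases
\[
(\lambda-2)\,d(v)+1 \le \rho(v) \le \lambda\ell(v)+1 ,
\]
so $(\lambda-2)d(v)\le\lambda\ell(v)$. Dividing by $\lambda-2>0$ gives $d(v)\le \frac{\lambda}{\lambda-2}\ell(v)$, as required.

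There is no real obstacle here. The only points needing care are the single-vertex case, where Lemma \ref{lem:gap} does not apply, and checking that the reduced potentials for $\ell(v)\le k-2$ are still bounded by $\lambda\ell(v)+1$.
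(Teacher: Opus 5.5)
Your proof is correct and is essentially the paper's argument: apply Lemma~\ref{lem:gap} to $S=\{v\}$, then bound $\rho(v)\le \ell(v)(\lambda+1)-(k-1)+1\le \lambda\ell(v)+1$ using $\ell(v)\le k-1$; the paper does this in one line rather than splitting into cases. Your separate handling of $|V(G)|=1$, where Lemma~\ref{lem:gap} does not apply, covers an edge case the paper's proof passes over silently.
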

\begin{proof}
    Consider a vertex $v \in V(G)$.
    By Lemma \ref{lem:gap} and the definition of $\rho(v)$,    \[
        d(v)(\lambda-2)+1 \leq \rho(v) \leq 1 + \ell(v) (\lambda+1)-(k-1).
    \]
    Rearranging,
    \[
        d(v) \leq  \ell(v) \frac{\lambda+1}{\lambda-2} - \frac{k-1}{\lambda-2} \leq \frac{\lambda}{\lambda-2} \ell(v).
    \]
\end{proof}

The next lemma shows that for each proper subset $S\subsetneq V(G)$,
$\rho(S)$ has an upper bound that is proportional to the number of edges joining $S$ with its complement.
This lemma is similar to Lemma 4.16 from \cite{BKX}.

\begin{lemma}
\label{lem:bd-UB}
For each nonempty set $S \subsetneq V(G)$, 
$\rho(S) \leq (\lambda+2)\|S,\overline S\|-k+1$.
\end{lemma}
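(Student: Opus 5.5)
The plan is to prove the bound by strong induction on $|S|$. The idea is to mirror the proof of Lemma \ref{lem:gap}, but with the roles of $S$ and $\overline S$ exchanged: we precolor the complement and find a dense minimal piece inside $S$.

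Fix a nonempty $S \subsetneq V(G)$ and assume the statement holds for all nonempty proper subsets of smaller size. Since $\overline S$ is a nonempty proper subset of $V(G)$, the minimality of $(G,\ell)$ gives an $(H,L)$-coloring $\phi$ of $G[\overline S]$. By Lemma \ref{lem:partial}, $G[S]$ has no $(H,L_\phi)$-coloring. Hence there is a nonempty $U \subseteq S$ and a spanning subgraph $G'' \subseteq G[U]$ such that $(G'',\ell_\phi|_U)$ is minimal. Since $\ell_\phi \le \ell \le k-1$, and $G''$ is strictly smaller than $G$, the minimality of the counterexample applies to $(G'',\ell_\phi)$. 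Moreover $G''$ is not $K_k$ by Lemma \ref{lem:Kk-free}. Therefore $\rho_{G'',\ell_\phi}(U) \le -k+1$.

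Next we pass from $G''$ and $\ell_\phi$ back to $G$ and $\ell$.
\begin{itemize}
\item Deleting edges of $G[U]$ that are absent from $G''$ only changes $\rho$ by adding nonnegative amounts when we go from $G$ to $G''$. Every edge, including a non-first parallel edge, has negative potential. So $\rho_{G,\ell_\phi}(U) \le \rho_{G'',\ell_\phi}(U) \le -k+1$.
\item Each $v \in U$ satisfies $\ell(v) - \ell_\phi(v) \le \|v,\overline S\|$. As in the proof of Lemma \ref{lem:gap}, this gives $\rho_\ell(v) - \rho_{\ell_\phi}(v) \le (\lambda+2)\|v,\overline S\|$; the extra $+1$ beyond $\lambda+1$ absorbs the possible jump of $F(v)$.
\end{itemize}
Summing over $U$ yields
\[
\rho(U) \le (\lambda+2)\|U,\overline S\| - k + 1 .
\]
If $U = S$, this is exactly the claim.

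If $U \subsetneq S$, set $W = S \setminus U$, which is nonempty and smaller than $S$. The induction hypothesis gives
\[
\rho(W) \le (\lambda+2)\|W,\overline W\| - k + 1, \qquad \|W,\overline W\| = \|W,\overline S\| + \|W,U\| .
\]
Every edge between $U$ and $W$ has potential at most $-2\lambda$, so
\[
\rho(S) \le \rho(U) + \rho(W) - 2\lambda\|U,W\| .
\]
Combining the three bounds gives
\[
\rho(S) \le (\lambda+2)\|S,\overline S\| + (2-\lambda)\|U,W\| - 2k + 2 \le (\lambda+2)\|S,\overline S\| - k + 1,
\]
using $\lambda > 2$ and $k \ge 1$. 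This completes the induction.

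The base case needs no separate treatment, because the argument above never uses anything beyond the induction hypothesis on strictly smaller sets. As a sanity check for a singleton $S = \{v\}$, the bound also follows directly from Lemma \ref{lem:dl}.

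I expect the main point requiring care to be the transfer from $\ell_\phi$ back to $\ell$. One must verify $\rho_\ell(v) - \rho_{\ell_\phi}(v) \le (\lambda+2)\|v,\overline S\|$ in all cases, including the case $\ell(v) = k-1$ where $F$ drops from $1$ to $0$, and the case where $\ell_\phi(v)$ hits $0$. This is the same computation already used in Lemma \ref{lem:gap}. One must also check that the minimal pair $(G'',\ell_\phi)$ is really covered by the minimality of the counterexample, which holds because it has fewer vertices than $G$.
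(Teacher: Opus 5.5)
Your proposal is correct and follows essentially the same route as the paper: precolor $\overline S$, extract a minimal piece $U\subseteq S$ with $\rho(U)\le(\lambda+2)\|U,\overline S\|-k+1$, and when $U\subsetneq S$ combine this with the induction hypothesis on $S\setminus U$. The only difference is that the paper first reduces to $G[S]$ connected in order to get $\|U,S\setminus U\|\ge 1$. As your computation shows, that step is unnecessary, since $(2-\lambda)\|U,W\|\le 0$ and the slack $-2k+2\le -k+1$ already suffice.
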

\begin{proof}
    Let $S$ be a minimum counterexample to the lemma, and write $j = \|S,\overline S\|$.
    If $G[S]$ is disconnected, then $S$ has two proper subsets $S_1$ and $S_2$ for which $\rho(S) = \rho(S_1)+ \rho(S_2)$. 
    By the minimality of $S$,
    $\rho(S_i) \leq (\lambda+2)\|S_i,\overline S\| - k+1$ for each $i \in \{1,2\}$, so that 
    $\rho(S) \leq j(\lambda+2)-2(k-1)$, contradicting the assumption that $S$ is a counterexample. Therefore, we assume that $G[S]$ is connected.

    By the minimality of $(G,\ell)$, $G[\overline S]$ has an $L$-coloring $\phi$. Since $G[S]$ has no $(H,L_{\phi})$-coloring, there is a subset $U \subseteq S$, containing at least one vertex with a neighbor in $\overline S$, for which $\rho_{\ell_{\phi}}(U) \leq -k+1$.
    Let $r = \|U,\overline S\|$.
    Then,
    $\rho_{\ell}(U) \leq -k+1 + r(\lambda+2)$.
    As $S$ is a counterexample and $r \leq j$, $U \subsetneq S$.

    Now, write $U' = U \setminus S$. Let $q = \|U,S \setminus U\|$, and observe that $\|U',\overline U'\| = q + j-r$.
    As $G[S]$ is connected, $q \geq 1$. 
    By the minimality of $S$, we know that $\rho(U') \leq (\lambda+2)(q+j-r)-(k-1)$. Therefore,
    \[
        \rho(S) \leq \rho(U') + \rho(U) -2q\lambda \leq (\lambda+2)(q+j-r)-(k-1) + r(\lambda+2) - (k-1) - 2q\lambda.
    \]
    As $\lambda \geq 2$, this upper bound 
    is strictly less than $j(\lambda+2)-(k-1)$, contradicting the assumption that $S$ is a counterexample.
\end{proof}

By combining Lemmas \ref{lem:gap} and \ref{lem:bd-UB},
we obtain the following universal lower bound for the potential of a nonempty proper subset of $V(G)$.
This lemma is perhaps the most important consequence of using an upper bound of $-k+1$ rather than $-2$ in Theorem \ref{thm:main}.

\begin{lemma}
\label{lem:rho-LB}
    For every nonempty proper subset $S \subsetneq V(G)$, 
    $\|S,\overline S\| \geq \frac k4$, and 
    $\rho(S) \geq \frac {k}{4} (\lambda-2) + 1$.
\end{lemma}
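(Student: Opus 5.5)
Write $j = \|S,\overline S\|$ for a nonempty proper subset $S \subsetneq V(G)$. The plan is to apply Lemma \ref{lem:gap} and Lemma \ref{lem:bd-UB} to the same set $S$ and compare the two bounds. Doing so gives
\[
(\lambda-2)j + 1 \leq \rho(S) \leq (\lambda+2)j - k + 1,
\]
which rearranges to $4j \geq k$, that is, $j \geq \frac k4$. This is the first assertion.

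The second assertion then follows by substituting $j \geq \frac k4$ back into Lemma \ref{lem:gap}:
\[
\rho(S) \geq (\lambda-2)j + 1 \geq \frac k4(\lambda-2) + 1,
\]
where we use $\lambda - 2 > 0$.

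There is no real obstacle here; the work is in the two preceding lemmas. The only things to check are that both lemmas apply to every nonempty proper subset with no extra hypotheses, and that neither bound depends on whether $G[S]$ is connected. Both checks hold as the lemmas are stated.
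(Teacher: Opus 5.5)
Your proposal is correct and is essentially the paper's own proof. The paper likewise sandwiches $\rho(S)$ between Lemma~\ref{lem:gap} and Lemma~\ref{lem:bd-UB} to get $j \geq \frac k4$, then substitutes this back into Lemma~\ref{lem:gap} to obtain the potential bound.
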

\begin{proof}
    Let $j = \|S,\overline S\|$. 
    By Lemmas \ref{lem:gap} and \ref{lem:bd-UB},
    \[
        j(\lambda-2) + 1 \leq \rho(S) \leq j(\lambda+2)-k+1.
    \]
    Combining the lower and upper bounds, $j \geq \frac k4$.
    Thus, by Lemma \ref{lem:gap},
    $\rho(S) \geq \frac k4 (\lambda-2)+1$.
\end{proof}




Finally, we show that no proper subset of $V(G)$ 
induces a subgraph that is too close to $K_k$.

\begin{lemma}
\label{lem:no-exceptions}
    For each $k$-vertex set $K \subsetneq V(G)$,
    $G[K]$ has fewer than $\binom k2 - \frac{\tau k}{\lambda} $ edges.
\end{lemma}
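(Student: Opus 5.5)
The plan is to compare two estimates of $\rho(K)$: the universal lower bound from Lemma~\ref{lem:rho-LB}, and a direct upper bound from the definition of $\rho$. The constant $\tau$ is chosen so that these two estimates collide exactly. Suppose for contradiction that $K \subsetneq V(G)$ has $|K| = k$ and $G[K]$ has at least $\binom k2 - \frac{\tau k}{\lambda}$ edges. Since $K$ is a nonempty proper subset of $V(G)$, Lemma~\ref{lem:rho-LB} gives
\[
\rho(K) \geq \tfrac k4(\lambda-2)+1 .
\]

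For the upper bound, I first check that every vertex satisfies $\rho(v) \leq (k-1)\lambda+1$. This is equality when $\ell(v)=k-1$. When $\ell(v)\le k-2$,
\[
\rho(v)=\ell(v)(\lambda+1)-(k-1) \le (k-2)(\lambda+1)-(k-1) = (k-1)\lambda - \lambda - 1 < (k-1)\lambda+1 .
\]
Every edge has potential at most $-2\lambda$, since a non-first parallel edge contributes $-(2\lambda+\mu)$. Writing $e = |E(G[K])|$, this gives
\[
\rho(K) \le k\bigl((k-1)\lambda+1\bigr) - 2\lambda e \le k(k-1)\lambda + k - 2\lambda\Bigl(\binom k2 - \frac{\tau k}{\lambda}\Bigr) = k + 2\tau k = k(1+2\tau).
\]
The count uses all edges of $G[K]$ with multiplicity. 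Parallel edges therefore only help, so no assumption on simplicity is needed.

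It remains to evaluate $1+2\tau$. Since $\tau = \frac18(\lambda-2)-\frac12$, we have $1+2\tau = \frac{\lambda-2}{4}$. Hence $\rho(K)\le \frac k4(\lambda-2)$, which contradicts the lower bound $\frac k4(\lambda-2)+1$. So $G[K]$ has fewer than $\binom k2 - \frac{\tau k}{\lambda}$ edges.

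There is no real obstacle here. The only points needing care are the vertex-potential comparison for $\ell(v)\le k-2$ and the fact that parallel edges lower the potential rather than raise it. The substantive input is Lemma~\ref{lem:rho-LB}, which is already available.
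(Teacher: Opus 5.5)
Your proof is correct and is the paper's argument: bound each vertex potential by $(k-1)\lambda+1$ and each edge potential by $-2\lambda$, which gives $\rho(K)\le k(1+2\tau)=\frac k4(\lambda-2)$ and contradicts Lemma~\ref{lem:rho-LB}. Counting all edges of $G[K]$ directly is slightly cleaner than the paper's wording (a $\binom k2$-edge multigraph with at most $\frac{\tau k}{\lambda}$ edges removed), because it visibly covers the case where $G[K]$ has parallel edges.
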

\begin{proof}
    Suppose that $K \subsetneq V(G)$ is a set of $k$ vertices inducing at least $\binom k2 - \frac{\tau k}{\lambda}$ edges.
    We aim to show that $\rho(K)$ is too small and hence violates Lemma \ref{lem:rho-LB}.
    We first
    note that a multigraph $G_0$ 
    on $k$ vertices
    with a function $\ell_0:V(G_0) \rightarrow \{0,\dots,k-1\}$
    and $\binom k2$ edges
    satisfies $\rho_{G_0,\ell_0}(V(G_0)) \leq k$. Therefore,
    as $(G[K],\ell)$ is obtained from 
    such a pair $(G_0,\ell_0)$ by
    removing at most $\frac{\tau k}{\lambda}$ edges,
    the fact that $\tau  = \frac 18(\lambda-2) - \frac 12$ implies that
     \[\rho(K) \leq  k + 2\tau k = \frac k4 (\lambda-2) < \frac k4(\lambda-2) + 1.\]
As $K \subsetneq V(G)$, we have a contradiction by Lemma \ref{lem:rho-LB}.
\end{proof}

\section{Low cliques}
The main goal of this section is to show that if $G$ has a large clique $K$, then at least one vertex $v \in V(K)$ satisfies $d(v) > \ell(v)$.
As we consider correspondence colorings of cliques, we need the following lemma, which is a special case of Theorem 5 from \cite{KimO}.

\begin{lemma}
\label{lem:clique-cover}
    Let $G_0$ be a $K_t$, and suppose that $(H_0,L_0)$ is a correspondence cover of $G_0$ satisfying $|L_0(v)| = t-1$ for each $v \in V(G)$. 
    If $G_0$ has no $(H_0,L_0)$-coloring, then $H_0$ has $t-1$ components isomorphic to $K_t$.
\end{lemma}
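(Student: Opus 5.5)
We prove the lemma by induction on $t$, using a greedy-coloring argument to force all matchings to be perfect and then the induction hypothesis to pin down the neighborhoods of nodes.

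\textbf{Base cases.} For $t=1$ the list is empty, so $H_0$ has $0 = t-1$ components and the statement holds trivially. For $t=2$ both lists are singletons, and the absence of a coloring means the two nodes are adjacent, giving one component isomorphic to $K_2$.

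\textbf{Every matching is perfect.} Let $t \geq 3$. Since $G_0$ is simple, for each pair $u,w$ the graph $H_0[L_0(u)\cup L_0(w)]$ is a single matching. Suppose some node $x \in L_0(u)$ has no neighbor in $L_0(w)$ for some $w \neq u$. Color $u$ by $x$ and delete $N_{H_0}(x)$ from the other lists. Every vertex of $G_0-u \cong K_{t-1}$ then retains at least $t-2$ nodes, and $w$ retains $t-1$. We color $G_0-u$ greedily, taking $w$ last. Each vertex other than $w$ sees at most $t-3$ previously colored vertices, so it has a free node among its $t-2$. The vertex $w$ sees $t-2$ colored vertices and has $t-1$ nodes, so it also has a free node. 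This contradicts the assumption that there is no coloring. Hence every matching is perfect, and $H_0$ is $(t-1)$-regular with exactly one neighbor of each node in each other list.

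\textbf{Applying the induction hypothesis.} Fix a vertex $v$ and a node $x \in L_0(v)$. Coloring $v$ by $x$ leaves every other vertex with exactly $t-2$ nodes, because the matchings are perfect. The remaining graph $K_{t-1}$ must have no coloring from these reduced lists, or else we could extend to a coloring of $G_0$. By induction, the graph $H' = H_0 - L_0(v) - N(x)$ is a disjoint union of $t-2$ copies of $K_{t-1}$.

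\textbf{Locating $N(x)$.} Take a node $z$ of $H'$. It has $t-2$ neighbors inside its own $K_{t-1}$-component of $H'$ and exactly one neighbor in $L_0(v)$. Together these account for all $t-1$ of its neighbors, so $z$ has no neighbor in $N(x)$. Now consider $a \in N(x)$, say $a \in L_0(u)$. Its neighbors outside $L_0(v)$ form one node in each of the $t-2$ lists other than $L_0(u)$ and $L_0(v)$, and by the previous observation none of them lies in $H'$. So all of them lie in $N(x)\setminus\{a\}$. That set has exactly $t-2$ nodes, one per list other than $L_0(u)$ and $L_0(v)$, so $a$ is adjacent to every node of $N(x)\setminus\{a\}$.

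\textbf{Conclusion.} Therefore $\{x\}\cup N(x)$ induces a $K_t$. By regularity it is an entire component of $H_0$. Doing this for each of the $t-1$ nodes $x\in L_0(v)$ produces distinct components, since each contains exactly one node of $L_0(v)$. These $t-1$ components of size $t$ cover all $t(t-1)$ nodes of $H_0$.

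The main obstacle is the neighborhood step: showing that each $z \in H'$ has no neighbor in $N(x)$. It relies on the fact that perfectness of the matchings already exhausts the degree of $z$.
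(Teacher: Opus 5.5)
Your proof is correct, and it takes a different route from the paper. The paper gives no proof of this lemma: it quotes it as a special case of Theorem~5 of \cite{KimO}, a more general structural result on correspondence covers that admit no coloring. You instead give a self-contained induction on $t$, in three steps:
\begin{enumerate}
\item A greedy extension, coloring $w$ last, shows that every matching between two lists is perfect. Hence $H_0$ is $(t-1)$-regular, and each node has exactly one neighbor in each other list.
\item Applying the induction hypothesis to $G_0-v$, with lists reduced by $N_{H_0}(x)$, identifies $H_0-L_0(v)-N_{H_0}(x)$ as $t-2$ disjoint copies of $K_{t-1}$. (Going from ``has $t-2$ such components'' to ``is their disjoint union'' is just the node count $(t-2)(t-1)$; it is worth stating.)
\item A degree count at each node $z$ of that graph shows it has no neighbor in $N_{H_0}(x)$. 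This forces each node of $N_{H_0}(x)$ to be adjacent to all the others, so by regularity $\{x\}\cup N_{H_0}(x)$ is a whole $K_t$ component.
\end{enumerate}
Each step checks out, including the independence-of-lists facts you use implicitly: each $K_{t-1}$ component has one node per list, and distinct $x\in L_0(v)$ give distinct components.

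The trade-off is this. The citation keeps the paper short and places the lemma inside a general characterization. Your argument is elementary and specific to cliques, and it makes the paper self-contained at this step. It also shows exactly where the hypothesis $|L_0(v)|=t-1$ is used: it turns a matching that covers a list into a perfect one, and it makes the neighbors of each $z$ account for its entire degree.
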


We say that a vertex $v \in V(G)$ is \emph{low} if $\ell(v) = d(v)$.
Let $\mathcal L$ be the subgraph of $G$ induced by low vertices.
We will ultimately show that no clique of $\mathcal L$ is too large.

\begin{lemma}
    \label{lem:L-is-GDP}
    $\mathcal L$ is a GDP-forest.
\end{lemma}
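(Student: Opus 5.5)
The plan is to reduce the statement to Theorem~\ref{thm:GDP-char} by showing that $\mathcal L$ is not degree correspondence-colorable. Each component of a multigraph that is not degree correspondence-colorable has this property itself, so it is enough to prove the following: for every component $C$ of $\mathcal L$, $C$ is not degree correspondence-colorable. Then Theorem~\ref{thm:GDP-char} says $C$ is a GDP-tree, and hence $\mathcal L$ is a GDP-forest.

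Fix a component $C$ of $\mathcal L$ and let $S = V(G) \setminus V(C)$. If $S = \emptyset$, then $G = C$ and every vertex is low. The cover $(H,L)$ has $|L(v)| = \ell(v) = d_G(v)$ for each $v$, so it is a degree cover of $C$ with no $(H,L)$-coloring, and we are done. So assume $S \neq \emptyset$. Then $V(C) \neq \emptyset$ means $S \subsetneq V(G)$, and by the minimality of $(G,\ell)$ the graph $G[S]$ has an $(H,L)$-coloring $\phi$. Lemma~\ref{lem:partial} then gives that $C = G[\overline S]$ has no $(H,L_\phi)$-coloring.

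It remains to check that the restriction of $(H,L_\phi)$ to $C$ is a degree cover of $C$. Take $v \in V(C)$. Every edge of $G$ from $v$ into $S$ contributes at most one node of $L(v)$ adjacent to $\phi$: each edge between $u$ and $v$ corresponds to a matching in $H[L(u)\cup L(v)]$, so $\phi(u)$ has at most $\|u,v\|$ neighbours in $L(v)$. Hence
\[
\ell_\phi(v) \geq \ell(v) - \|v,S\| = d_G(v) - \|v,S\| = d_C(v),
\]
using that $v$ is low and that every neighbour of $v$ outside $S$ lies in $C$, since $C$ is a component of $G[\overline S]$. We may shrink each $L_\phi(v)$ to exactly $d_C(v)$ nodes if the definition of a degree cover is taken with equality; either way we get a degree cover of $C$ with no coloring. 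So $C$ is not degree correspondence-colorable, and Theorem~\ref{thm:GDP-char} finishes the proof.

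The only delicate points are the matching-count inequality and the observation that $\mathcal L$ is an induced subgraph, so that $d_C(v) = d_G(v) - \|v,S\|$. Both are routine.
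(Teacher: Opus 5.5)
Your proof is correct. It uses the same ingredients as the paper: color the rest of $G$ by minimality, invoke Lemma~\ref{lem:partial}, count matching edges to get $\ell_\phi(v)\ge \ell(v)-\|v,S\|=d_C(v)$, and apply Theorem~\ref{thm:GDP-char}. The difference is in organization. The paper fixes a single $(H,L)$-coloring $\phi$ of $G-\mathcal L$ and applies Theorem~\ref{thm:GDP-char} to all of $\mathcal L$ at once. You instead fix one component $C$ at a time and color $G-C$.

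Your version is actually the more careful one. Theorem~\ref{thm:GDP-char} as quoted is only valid for connected multigraphs. For example, take the disjoint union of $K_2$ and $K_4^-$, with one-node lists on the $K_2$ whose two nodes are adjacent in $H$. This graph has no coloring from that degree cover, yet $K_4^-$ is not a GDP-tree. So the one-shot argument literally shows only that the components of $\mathcal L$ that fail to be $(H,L_\phi)$-colorable are GDP-trees, and a single $\phi$ may leave other components colorable. You produce, for every component $C$, its own uncolorable degree cover and apply the theorem to the connected graph $C$. This gives that every component is a GDP-tree, which is what is needed in Claim~\ref{claim:L}, where each component $T$ of $\mathcal L$ is treated as a GDP-tree and fed to Lemma~\ref{lem:GDP}.

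One correction to your write-up concerns your opening claim that every component of a non-degree-correspondence-colorable multigraph is itself non-degree-correspondence-colorable. This is false: only some component must be, and that is exactly why the per-component argument is needed. You never use the claim, since the reduction you actually carry out stands on its own, so just delete that sentence.
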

\begin{proof}
    If $\mathcal L$ has no vertex, then the lemma holds. Otherwise, suppose that $\mathcal L$ has at least one vertex.
    By the minimality of $(G,\ell)$, the graph $G' = G-\mathcal L$ has an $(H,L)$-coloring $\phi$.
    By Lemma \ref{lem:partial}, $\mathcal L$ has no $(H,L_{\phi})$-coloring.
    Furthermore, as each $v \in V(\mathcal L)$ is low, $\ell_{\phi}(v) \geq d_{\mathcal L}(v)$ for each $v \in V(\mathcal L)$.
    Therefore, $(H,L_{\phi})$ is a degree cover of $\mathcal L$ that admits no correspondence coloring, so by Theorem \ref{thm:GDP-char}, $\mathcal L$ is a GDP-forest.
\end{proof}

\begin{lemma}
    $\mathcal L$ has no $K_{\lceil(1-\frac{\tau}{\lambda})k \rceil }$ subgraph.
    
\end{lemma}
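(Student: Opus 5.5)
The plan is to compare the potential of a large low clique $K$ with the lower bounds from Section~2, using the freedom in how $K$ is coloured as the source of the contradiction. Write $t=\lceil(1-\frac{\tau}{\lambda})k\rceil$ and suppose $K\subseteq V(\mathcal L)$ induces a $K_t$ in $\mathcal L$.

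\emph{Step 1 (the shape of $K$).} By Lemma~\ref{lem:L-is-GDP}, $K$ lies in a block of the GDP-forest $\mathcal L$, so $G[K]$ is $rK_t$ for some $r\ge 1$. Every $v\in K$ is low, so $r(t-1)\le d(v)=\ell(v)\le k-1$. Since $t>k/2$, this forces $r=1$, and $G[K]$ is a simple $K_t$. For $v\in K$ let $o_v=d(v)-(t-1)$ be the number of edges from $v$ to $\overline K$, and let $O=\sum_{v\in K}o_v=\|K,\overline K\|$. Each $o_v\le k-t\le \tau k/\lambda$, so every vertex of $K$ has very few edges leaving $K$. By Lemma~\ref{lem:Kk-free} and Lemma~\ref{lem:no-exceptions}, $K\ne V(G)$, and Lemma~\ref{lem:rho-LB} gives $O\ge k/4$.

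\emph{Step 2 (potential of $K$).} Directly from \eqref{eqn:potential},
\[
\rho(K)=t(t-1)+(\lambda+1)O-t(k-1)+\sum_{v\in K}F(v)\le -t(k-t-1)+(\lambda+1)O+t .
\]
Lemma~\ref{lem:gap} alone gives no contradiction here. The term $(\lambda+1)O$ is too large, because both $O$ and $t(k-t)$ are of order $\tau k^2/\lambda$.

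\emph{Step 3 (colour $K$ first, cleverly).} Mimic the proof of Lemma~\ref{lem:gap} with the roles reversed. Colour $K$ first by an $(H,L)$-colouring $\phi$ of $G[K]$, then take a set $U\subseteq\overline K$ carrying a minimal pair for $\ell_\phi$. Induction gives $\rho_{\ell_\phi}(U)\le -k+1$.

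In Lemma~\ref{lem:gap} one charges $\lambda+2$ per edge of $E(K,U)$. Here, instead, charge only the number of colours actually removed from vertices of $U$. The point is that $K$ has a great deal of freedom. Each $v\in K$ has $\ell(v)=t-1+o_v$ colours but only $t-1$ neighbours inside $K$. By Lemma~\ref{lem:clique-cover}, the colourings of $K$ from any fixed $t-1$ colours per vertex form $t-1$ disjoint ``transversal'' classes, and the extra $o_v$ colours enlarge the choice further.

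Choose $\phi$ to minimise the total number of nodes of $H$ over $\overline K$ that are killed. The aim is to show this total is at most roughly $O/2$, or at least a constant fraction of $O$ smaller than $O$ itself. This would give
\[
\rho(K\cup U)\le\rho(K)-k+1-(\lambda-2)q+(\text{savings}),
\]
where $q=\|K,U\|$. Combined with Step~2 and the bound $o_v\le\tau k/\lambda$, this should either force $\rho(G)\le -k+1$ when $K\cup U=V(G)$, or violate Lemma~\ref{lem:rho-LB} or Lemma~\ref{lem:gap} for $K\cup U$ otherwise.

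The main obstacle is Step~3: exhibiting a colouring of $K$ that wastes few colours outside $K$. I would first try an averaging argument over the $t-1$ disjoint colour classes supplied by Lemma~\ref{lem:clique-cover}. A fixed outside node is hit by only $O(1)$ of these classes, so some class kills at most about $O/(t-1)\cdot(\text{small})$ nodes per outside vertex. That should recover the factor needed in the $\lambda$- and $\tau$-dependent arithmetic.
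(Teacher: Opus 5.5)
\textbf{There is a genuine gap in Step~3.}

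Your Step~1 is fine. Indeed $r(t-1)\le k-1<2(t-1)$ is a quicker way to get $G[K]=K_t$ than the paper's potential count. One correction: $K\neq V(G)$ does not follow from Lemma~\ref{lem:no-exceptions}, which concerns $k$-vertex proper subsets. It follows from the direct computation $\rho(G)=-t(k-t)\le -k+1$ when $G=K_t$ and $\ell\equiv t-1$.

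The real problem is Step~3, on which everything rests. A colouring $\phi$ of $K$ kills fewer than $O$ nodes over $\overline K$ only in two situations: some $\phi(v)$ is unmatched in one of the matchings on the edges leaving $v$, or two vertices of $K$ kill the same node of a common outside neighbour. Nothing in your argument excludes the configuration where every vertex of $\overline K$ has at most one neighbour in $K$, via a single edge carrying a perfect matching. There \emph{every} colouring of $K$ kills exactly $O$ nodes. With $U=\overline K$ your estimate then gives at best $\rho(G)\le -t(k-t)+2O+\sum_{v\in K}F(v)-k+1$, which is not $\le -k+1$ once $2O>t(k-t)$. Since $O$ can be as large as $t(k-t)$ (the very difficulty you noted in Step~2), the method stalls.

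The proposed averaging does not help. Lemma~\ref{lem:clique-cover} describes the cover on $K$ only when $K$ is \emph{not} colourable from lists of size $t-1$. So when you colour $K$ first there are no canonical disjoint transversal classes. Even where such classes exist, each is a colouring that kills one node per edge of a perfect matching, so none beats $O$.

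Even a globally economical $\phi$ would not suffice when $K\cup U\subsetneq V(G)$. To contradict Lemma~\ref{lem:gap} there, you need a saving of order $O/\lambda$ among the nodes killed \emph{inside} $U$. Since $U$ is determined only after $\phi$, minimising over all of $\overline K$ gives no control over where the savings land.

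\textbf{The missing idea is to colour $G-K$ first} and exploit the structure this forces on $H$ over $K$. Each $v\in K$ keeps at least $t-1$ colours. So by Theorem~\ref{thm:GDP-char} and Lemma~\ref{lem:clique-cover}, for every colouring of $G-K$ the surviving part of $H$ over $K$ is $t-1$ disjoint copies of $K_t$. The paper then splits into two cases.

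\emph{Some component is not a $K_t$.} Suppose a node $\alpha\in L(v)$ lies in a component of $H[\bigcup_{w\in K}L(w)]$ that is not a $K_t$. Then every colouring of $G-K$ must kill $\alpha$. Deleting the at most $o_v\le k-t\le\tau k/\lambda$ outside neighbours of $\alpha$ therefore leaves $G-K$ uncolourable. This yields $U\subseteq\overline K$ with $\rho_\ell(U)\le -k+1+(\lambda+2)\tau k/\lambda<\frac k4(\lambda-2)$, contradicting Lemma~\ref{lem:rho-LB}.

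\emph{All components are $K_t$'s.} The paper first shows that the sets $N(v)\setminus K$ are not all equal, via a separate potential count and a degree-choosability colouring argument. It then deletes $K$ and adds at most $k-t$ edges $xx_i$, with matchings encoding the $K_t$-components. This preserves non-colourability and again produces a proper subset whose potential violates Lemma~\ref{lem:rho-LB}.

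The essential point is the size of the perturbation. In the paper it is at most $k-t\le\tau k/\lambda$ nodes or edges, which Lemma~\ref{lem:rho-LB} can absorb. Your scheme perturbs by up to $O$ nodes, and $O$ can be of order $t(k-t)$.
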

\begin{proof}
Suppose that
$t \geq (1-\frac{\tau}{\lambda}) k$
and
$K \subseteq V(\mathcal L)$ 
is a set of $t$ vertices
for which $G[K]$
has a spanning $K_t$ subgraph.
Choose $K$ so that $t$ is maximized.
As $G$ has no $K_k$  by Lemma \ref{lem:Kk-free}, $t \leq k-1$.
\begin{claim}
    $K \subsetneq V(G)$.
\end{claim}
\begin{proof}
If $V(G) = K$, then $G = \mathcal L$.
Then, as $\mathcal L$ is a GDP-tree by Lemma \ref{lem:L-is-GDP},
$G=rK_t$ for some $r \geq 1$, and $\ell(v) = d(v) = r(t-1)$ for each $v \in V(G)$.
If $r = 1$, then
$\ell(v) = d(v) \leq t-1 \leq k-2$ for each $v \in V(G)$, so
 \[
\rho(G) \leq t ((t-1)(\lambda+1) - (k-1)) - 2\lambda \binom t2. 
\]
The quadratic term of this function is $t^2$ with a coefficient of $1$, so the function above is maximized when $t$ is as small or as large as possible. If $t\in \{1,k-1\}$, then $\rho(G) = -k+1$.
As $1 \leq t \leq k-1$, we have
$\rho(G) \leq -k+1$, and 
$G$ is not a counterexample.
If $r \geq 2$, 
then
\[
\rho(G) \leq t (1+ r(t-1)(\lambda+1) - (k-1) ) - 2\lambda r \binom t2 - (r-1) \mu \binom t2.
\]
As the terms with $\lambda$ cancel,
we have 
\[
\rho(G) \leq  t(2-k+ r(t-1) ) - (r-1) \mu \binom t2 = t \left ( 2 - k + (t-1) \left (r - \frac 12 \mu (r-1) \right ) \right ).
\]
As $\frac 12 \mu (r-1) \geq r$, it follows that 
\[
\rho(G) \leq t(2-k) < -k+1,
\]
and again, $G$ is not a counterexample.
Therefore, $K \subsetneq V(G)$.
\end{proof}

\begin{claim}
\label{claim:Kt}
    $G[K]=K_t$.
\end{claim}
\begin{proof}
If not, then as $\mathcal L$ is a GDP-tree,
$G[K]=rK_t$ for some $r \geq 2$.
Then, as $\rho(K)$ is largest when $r=2$,
\begin{eqnarray*}
\rho(K) &\leq& t \left ( (k-1)\lambda + 1 
\right ) - 4 \lambda \binom t2 -  \mu \binom t2  =t \left ( 
1+(k-1)\lambda  - 2(t-1)\lambda - \frac 12 \mu (t-1)\lambda  \right )   \\
&=&
t \left ( 1 + \lambda \left ( k-1  - (t-1) \left (2\lambda + \frac 12 \mu \right ) \right ) \right ).
\end{eqnarray*}
As $t \geq (1 - \frac{\tau}{\lambda} )k > 1 + \frac{3k}{2\lambda + \frac 12 \mu}$, we have 
\[
\rho(K) < t(1 - 2\lambda k ) < -k+1,
\]
contradicting Lemma \ref{lem:rho-LB}.
Therefore, $G[K]$ is a $K_t$.
\end{proof}

For each $v \in K$, write $B(v) = N(v) \setminus K$.
\begin{claim}
\label{claim:B}
The sets of the family $\{B(v): v \in K\}$ are not all identical.
\end{claim}
\begin{proof}
Suppose that $B(u) = B(v)$ for every $u,v \in K$.
Write $B = B(v)$ for each $v \in K$.
We note that for every $w \in V(G) \setminus K$ with a neighbor in $K$, $K \subseteq N(w)$.
We consider two cases.

\vspace{0.5cm}

\noindent
\textbf{Case 1:}
$G[B \cup K]$ has a spanning clique.
Write $K' = B \cup K$ and $|B| = q$. Observe that as $G$ has no $K_k$ and $G[K']$ has a spanning clique, $t+q \leq k-1$.
For each $u \in K'$,
write 
\[\sigma(u) = \rho(u) - \lambda d(u) - \frac 12 \mu\sum_{u' \in N(u) \cap K'} (\|u,u'\| - 1),\] and observe that $\sum_{u \in K'} \sigma(u)=\rho(K') - \lambda \|K',\overline{K'}\|$.
Furthermore, by Lemma \ref{lem:gap},
$\rho(K') > (\lambda-2)\|K',\overline{K'}\|$.
Therefore,
$\sum_{u \in K'} \sigma(u) > -2\|K',\overline{K'}\|$.

Now, for each $u \in K'$, write $r(u) = d(u) - d_{K'}(u)$, and observe that $r(v) = 0$ for each $ v\in K$.
By rearranging the inequality above, 
\begin{equation}
\label{eqn:r}
\sum_{ u\in K'} (\sigma(v) + 2r(v) ) > 0.
\end{equation}

Now, for each $w \in B$, we have 
$\rho(w) \leq 2-k + \ell(w) (\lambda+1) $ and
$r(w) \leq d(w) - (q+ t) + 1$.
Therefore,
\[
\sigma(w) + 2r(w) \leq \rho(w) - \lambda d(w) + 2r(w) \leq 4-k
+ \ell(w) (\lambda+1) -  (\lambda-2) d(w) - 2(q + t) .
\]
As
$d(w) \geq \ell(w) + 1$ and $\ell(w) \leq k-1$,
\begin{eqnarray}
\notag
\sigma(w) + 2r(w)&\leq& 4 - k - 2(q+t) + \ell(w)(\lambda+1) - (\lambda-2)(\ell(w) + 1) = 6 - k - 2(q+t) + 3\ell(w) \\
\label{eqn:w-UB}
&\leq& 2k +3 - 2(q+t).
\end{eqnarray}

Next, for each $v \in K$, we have $d(v) = \ell(v)$.
If $\ell(v) \leq k-2$, then $\rho(v) = \ell(v) (\lambda+1) - k + 1$.
Then, $\sigma(v) \leq \ell(v) (\lambda+1) - k + 1 - \lambda (q+t-1) - (\lambda + \frac 12 \mu) (\ell(v) - q - t + 1)$.
As this upper bound is decreasing as a function of $\ell(v)$, 
and as $\ell(v) = d(v) \geq q+t-1$,
we thus have 
\[
\sigma(v) \leq q+t-k.
\]
If $\ell(v) = k-1$, then $\ell(v) \geq q+t$, so a similar calculation shows that 
\[
\sigma(v) \leq \ell(v)(\lambda+1) -k+2 - \lambda(q+t-1) - (\lambda + \frac 12 \mu) (\ell(v) - q - t + 1) \leq q+t - k + 2 + \lambda - (\lambda + \frac 12 \mu) \leq q+t-k.
\]

Combining our upper bound for $\sigma(v)$ with \eqref{eqn:w-UB},
we have 
\[
\sum_{u \in K'} (\sigma(u) + 2r(u)) \leq 
t ((q+t) -k ) + q(2k-1 - 2(q+t)).
\]
As $q < \frac 12 k$, we have $t + 2k - 4q > 0$, so this upper bound is increasing as a function of $q$ and hence is maximized when $q$ is as large as possible. As $q+t \leq k-1$, we thus set $q = k-t-1$,
which yields
\[
\sum_{u \in K'} ( \sigma(u) + 2r(u))  \leq -t + (k-t-1) = k-2t-1 < 0,
\]
contradicting \eqref{eqn:r}.


\vspace{0.5cm}

\noindent
\textbf{Case 2:}
There are two non-neighbors $w_1,w_2 \in B$.
Let $M \subseteq K$ be the set of vertices $v \in K$ for which $\|v,w_1\| + \|v,w_2\| > 2$, and write $m = |M|$.
As $K \subseteq N(w_1)$ and $K \subseteq N(w_2)$,
it follows that $m \leq d(w_1) + d(w_2)- 2t \leq \frac{2\lambda}{\lambda-2}(k-1)-2t$, 
where the last inequality follows from Lemma \ref{lem:deg-UB}.
As $t \geq (1 - \frac{\tau}{\lambda})k > \frac{9}{10}k$ and $\frac{2}{\lambda-2} < \frac{1}{10}$,
it follows that 
$m <\frac 25 k$.
We let $K' = K \setminus M \cup \{w_1,w_2\}$.

Let $\phi$ be an $(H,L)$-coloring of $G-K'$.
%
We claim that $G[K']$ has an $(H,L_{\phi})$-coloring.
To this end, we extend $\phi$ one vertex at a time and update the lists $L_{\phi}$ after each new assignment.
We also update the 
graph $G_{\phi}$ induced by the uncolored vertices of $G$, as well as the
value $d_{\phi}(v)$ denoting the number of uncolored neighbors of a vertex $v \in K'$.
Note that by construction and Claim \ref{claim:Kt}, $G_{\phi}$ has no parallel edges.

Note that initially, $d_{\phi}(v) = \ell_{\phi}(v) = t-m+1$ for each $v \in K \setminus M$.
Furthermore, initially, $d_{\phi}(w_1) =  d_{\phi}(w_2) = t-m$,
and as $d(w_i) \leq (1 + \frac{2}{\lambda-2} ) \ell(w_i)$ for each $i \in \{1,2\}$,
it follows that $\ell_{\phi}(w_i) \geq d_{\phi}(w_i) - \frac{2}{\lambda-2} \ell(w_i) \geq t - m-\frac{2(k-1)}{\lambda-2} $.

Now,
while $\ell_{\phi}(w_1) < d_{\phi}(w_1)$,
we execute the following step:
\begin{quote}
    Choose a vertex $v \in V(G_{\phi}) \cap K$, and assign a color $\phi(v) \in L_{\phi}(v)$ to $v$ with no neighbor in $L(w_1)$.
\end{quote}
Writing $q = |V(G_{\phi}) \cap (K\setminus M)|$,
we observe that each time the step is executed, $\ell_{\phi}(v) \geq d_{\phi}(v) = q  +1 > q = d_{\phi}(w_1) > \ell_{\phi}(w_1)$,
so an appropriate color $\phi(v)$ 
can be chosen.
Furthermore, each time the step is executed, $d_{\phi}(w_1)$ decreases by $1$, while $\ell_{\phi}(w_1)$ remains constant.
As we initially have $d_{\phi}(w_1) - \ell_{\phi}(w_1) \leq \frac{2}{\lambda-2}\ell(w_1) \leq \frac{2(k-1)}{\lambda-2}$,
the step is executed at most $\frac{2(k-1)}{\lambda-2}$ times.

Next, while $\ell_{\phi}(w_2) < d_{\phi}(w_2)$, we execute the following step:
\begin{quote}
    Choose a vertex $v \in V(G_{\phi}) \cap K$, and assign a color $\phi(v) \in L_{\phi}(v)$ to $v$ with no neighbor in $L(w_2)$.
\end{quote}
Writing $q = |V(G_{\phi}) \cap (K\setminus M)|$,
we observe that each time the step is executed, $\ell_{\phi}(v) \geq d_{\phi}(v) =  q = d_{\phi}(w_1) > \ell_{\phi}(w_1)$,
so an appropriate color $\phi(v)$ 
can be chosen.
Thus, by a similar argument, an appropriate color $\phi(v)$ can always be chosen, 
and we execute the step at most $\frac{2(k-1)}{\lambda-2}$ times.

Therefore, after assigning colors 
to
at most $\frac{4(k-1)}{\lambda-2} < \frac 15 k < t- m-3$ vertices of $K\setminus M$,
we obtain a partial coloring $\phi$
such that $\ell_{\phi}(v) \geq d_{\phi}(v)$ for each $v \in V(G_{\phi})$.
As $V(G_{\phi}) \cap (K\setminus M)$ contains a triangle, it follows that $G_{\phi}$ has an induced $K_5^-$; 
therefore, by Theorem \ref{thm:GDP-char},
$G_{\phi}$ has an $(H,L_{\phi})$-coloring, and hence we can extend $\phi$ to an $(H,L)$-coloring of $G$.
This final contradiction proves the claim.
\end{proof}

By the minimality of $(G,\ell)$, there is an $(H,L)$-coloring $\phi$ of $G-K$.
As $d_K(v) = t-1$ for each $t \in K$, $\ell_{\phi}(v) \geq t-1$ for each $v \in K$.
As $G[K]$ has no $(H,L_{\phi})$-coloring by Lemma \ref{lem:partial},
Lemma
\ref{lem:clique-cover} implies that 
$H[\bigcup_{v \in K} L_{\phi}(v) ] $
consists of $t$ copies of $K_t$. We consider two cases.

\vspace{0.5cm}

\noindent
\textbf{Case 1:} There is a value $s > t$ such that $H[K]$ consists of 
$s$ copies of $K_t$.
In this case,
for each $v \in K$,
we have
$\ell(v) =s$,
and $v$ is joined to a set $B(v):=N(v) \setminus K$
by exactly $s-t+1$ edges.
By Claim \ref{claim:B},
there are $v,w \in K$
such that there is some vertex 
$x \in B(v) \setminus B(w)$.

Now, label the edges joining $x$ to $B(w)$ as $e_1,\dots e_{s-t+1}$, and write $x_i$ for the endpoint of $e_i$ in $B(w)$.
For each $i$, let $M_i$ be a matching between $L(x)$ and $L(x_i)$ such that a node $\alpha \in L(x)$ is joined to a node $\beta \in L(x_i)$ if and only if $\alpha$ and $\beta$ are adjacent to the same component of $H[K]$.
Then let $H' = H + \bigcup_{i=1}^{t-s+1} M_i$.
We write $G' = G-K+\{xx_1, \dots , xx_{s-t+1}\}$.
We observe that $G'$ has no $K_k$ subgraph; indeed, if some set $K' \subseteq V(G')$ induces at least $\binom k2$ edges in $G'$,
then $G[K']$ contains at least $\binom k2 - (s-t+1) \geq \binom k2 - (k-t) \geq \binom k2 - \frac {\tau  k}{\lambda}$ edges, contradicting Lemma \ref{lem:no-exceptions}.

We claim that $G'$ has no $(H',L)$-coloring. 
Indeed, suppose that $G'$ has an $(H',L)$-coloring $\phi$.
By some theorem, $(H[K],L_{\phi})$
consists of $t$ copies of $K_t$.
For each $v \in V(K)$, 
write $X(v) = L(v) \setminus L_{\phi}(v)$, and note that for each $c \in X(v)$, $c$ has a neighbor in $\phi$.
Furthermore, $(H[K],X)$ 
consists of $s-t$
$K_t$ components.
Now, let $c \in X(v)$
be the color in $X(v)$ adjacent to $\phi(x)$, and let $c' \in X(w)$ be the color in $X(w)$ belonging to a common component with $c$ in $(H,X)$.
Because, $c' \in X(w)$, without loss of generality, $c'$ is adjacent to $\phi(x_1)$.
Since $\phi(x)$ and $\phi(x_i)$ are adjacent to the same component of $(H[K],L)$,
there is an edge $\phi(x) \phi(x_i) \in E(H')$.
This contradicts the assumption that $\phi$ is an $(H',L)$-coloring of $G'$.
Therefore, $G'$ has no $(H',L)$-coloring.

Therefore, there is a subset $U \subseteq V(G')$ with $\rho_{G'}(U) \leq -k+1$.
Therefore, 
\[\rho_G(U) \leq -k+1 + (2\lambda+\mu)(s-t+1) < (2\lambda+\mu) \frac{ \tau  k}{\lambda} = (2\lambda+\mu) \frac{ \tau k}{\lambda} <  \frac 14 k (\lambda-2).\]
As $U \subsetneq V(G)$, this contradicts
Lemma \ref{lem:rho-LB}.

\vspace{0.5cm}

\noindent
\textbf{Case 2:} Some component $J$ of $H[K]$ is not a $K_t$. Choose $v \in K$ such that $L(v)$ 
contains some node $\alpha \in J$.
For every $(H,L)$-coloring $\phi$
of $G-K$, $H[\bigcup_{v \in K} L_{\phi}(v) ] $
consists of $t$ components isomorphic to $K_t$;
therefore, $\phi$ contains some neighbor of $\alpha$.
Define $L'(w) = L(w) \setminus N_H(\alpha)$ for each $w \in V(G) \setminus K$.
Then, $G-K$ has no $(H,L')$-coloring.
As $\|v,\overline K\| < \frac{\tau k}{\lambda}$,
$L'$ is obtained from $L$ by deleting a total of at most 
$\frac{\tau  k}{\lambda}$ nodes.

As $G-K$ has no $(H,L')$-coloring, 
there is
$U \subseteq V(G-K)$ such that $\rho_{\ell'}(U) \leq -k+1$, so that $\rho_{\ell}(U) \leq -k+1+ \frac{\tau  k}{\lambda}(\lambda+2) < \frac 14 (\lambda-2) k$.
As $U \subsetneq V(G)$, 
this contradicts Lemma \ref{lem:rho-LB}.

Both cases lead to a contradiction, so the proof is complete.
\end{proof}

\section{Discharging}
\subsection{A discharging lemma for GDP-trees}
In this section, we consider a GDP-tree $T \subseteq G$ of 
maximum degree at most $k-1$.
Our goal is to establish a lemma showing that 
during our upcoming discharging argument, every GDP-tree in $G$ receives enough negative charge to complete the discharging argument successfully.
The function $\Phi(T)$ that we define below will ultimately count the amount of negative charge that a GDP-tree $T$ holds at the end of discharging.
The lemma that we establish in this section is similar to Lemma 3.2 of \cite{BCKX}.

We write
$\alpha = \frac{\lambda-2}{k}$.
For each $v \in V(T)$, 
define $m(v) = \frac 13 \mu \sum_{w \in N(v)} \left ( |E_T(v,w)| - 1 \right )$.
For each $v \in V(T)$, let
\begin{equation}
\label{eqn:Phi}
\Phi_T(v) = -\rho(v) + \lambda \ell(v) + (\ell(v) - d_T(v))\alpha + m(v).
\end{equation}
As $\rho(v) \leq  \ell(v) (\lambda+1) - k+2$, it follows that
\[
\Phi_T(v) \geq  k-2-\ell(v) + (\ell(v)-d_T(v))\alpha + m(v) = k-2-\alpha d_T(v) + \ell(v) (\alpha -1) + m(v).
\]

Since this lower bound decreases as a function of $\ell(v)$, we use the upper bound $\ell(v) \leq k-1$ to observe that
\[
\Phi_T(v) \geq -1 +\alpha(k-1- d_T(v)) + m(v) \geq -1.
\]
Finally, we define 
\[
\Phi(T)  =  \sum_{v \in V(T)} \Phi_T(v).
\]
\begin{lemma}
\label{lem:GDP}
    Let $T$ be a GDP-tree of maximum degree at most $k-1$ and with no $K_t$ subgraph for $t \geq (1-\frac{\tau}{\lambda}) k$. Then, $\Phi(T) \geq |T|$.
\end{lemma}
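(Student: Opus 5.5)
The plan is to turn the vertex-by-vertex lower bound on $\Phi_T(v)$ into a global edge count over the blocks of $T$. The key input is that no clique block of $T$ is large, so the simple edges of $T$ have average degree noticeably below $k-1$.

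\textbf{Step 1: start from the pointwise bound.} We already have
\[
\Phi_T(v) \geq -1 + \alpha(k-1-d_T(v)) + m(v)
\]
for each $v \in V(T)$. Decompose $T$ into its blocks. For a block $B$ containing $v$, write $d_B(v)$ for the degree of $v$ in $B$ and $m_B(v)$ for the contribution of $B$ to $m(v)$. Then $d_T(v) = \sum_B d_B(v)$ and $m(v) = \sum_B m_B(v)$, so
\[
\Phi_T(v) \geq -1 + \alpha(k-1) - \sum_{B \ni v} \bigl(\alpha d_B(v) - m_B(v)\bigr).
\]

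\textbf{Step 2: multiple blocks cost nothing.} Let $B = rC$ with $r \geq 2$, where $C$ is a clique $K_s$ or a cycle. Then $d_B(v) = r\, d_C(v)$, and since $\mu = 6$ we have $m_B(v) = \frac{\mu}{3}(r-1) d_C(v) = 2(r-1)d_C(v)$. Hence
\[
\alpha d_B(v) - m_B(v) = d_C(v)\bigl(\alpha r - 2(r-1)\bigr) \leq 0,
\]
because $\alpha = \frac{\lambda-2}{k} \leq \frac{26}{47} < 1$ and $r \geq 2$. So multiple blocks can be dropped from the sum, and only simple blocks (simple cliques $K_s$ and simple cycles) are charged.

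\textbf{Step 3: count simple edges.} Summing over $v$, we get
\[
\Phi(T) \geq |T|\bigl(\alpha(k-1) - 1\bigr) - 2\alpha |E_s|,
\]
where $E_s$ is the set of edges lying in simple blocks. It therefore suffices to show
\[
2|E_s| \leq \Bigl(k-1 - \frac{2}{\alpha}\Bigr)|T| = \Bigl(k-1-\frac{2k}{\lambda-2}\Bigr)|T|.
\]
Since the blocks form a tree, $|T| = 1 + \sum_B (|B|-1)$. A simple clique block $K_s$ has $s \leq t_0 - 1$, where $t_0 = \lceil (1-\frac{\tau}{\lambda})k \rceil$, by the hypothesis that $T$ has no $K_{t_0}$. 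It contributes $2|E(B)| = s(s-1) \leq (t_0-1)(|B|-1)$. A simple cycle block of length $s$ contributes $2s \leq 4(|B|-1) \leq (t_0-1)(|B|-1)$, since $s \geq 3$. Multiple blocks contribute nothing to $E_s$ but still contribute to $|T|$. Thus
\[
2|E_s| \leq (t_0-1)(|T|-1) < (t_0-1)|T|.
\]

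\textbf{Step 4: the numerical comparison (main obstacle).} It remains to verify
\[
t_0 - 1 \leq k - 1 - \frac{2k}{\lambda-2}, \quad\text{i.e.,}\quad \Bigl\lceil \Bigl(1-\frac{\tau}{\lambda}\Bigr)k \Bigr\rceil \leq k - \frac{2k}{\lambda-2}.
\]
Recall $\tau = \frac{\lambda-2}{8} - \frac12$.
\begin{itemize}
\item For $\lambda = 28$ we have $\tau = \frac{11}{4}$, so $\frac{\tau}{\lambda} = \frac{11}{112} \approx 0.098$, while $\frac{2}{26} \approx 0.077$. The gap is about $0.021k$, which is at least $1$ once $k \geq 47$, so it absorbs the ceiling.
\item For $\lambda = 24$ we have $\tau = \frac94$, so $\frac{\tau}{\lambda} = \frac{3}{32} = 0.09375$, while $\frac{2}{22} \approx 0.0909$. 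The gap is about $0.0028k$, which exceeds $1$ exactly because $k \geq 352$.
\end{itemize}
I expect this rounding check to be the only delicate point. It is presumably where the thresholds $k \geq 47$ and $k \geq 352$ come from.

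If the margin is tight, I will use the strict inequality $(t_0-1)(|T|-1) < (t_0-1)|T|$ to gain slack. I will also check the trivial case $|T| = 1$ separately: there $\Phi_T(v) \geq -1 + \alpha(k-1) \geq 1$. Apart from Step 4, everything reduces to the block-tree identity for $|T|$ and the sign computation in Step 2.
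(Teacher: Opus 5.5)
Your proof is correct and is essentially the paper's argument. The paper inducts on $|T|$ by peeling off a terminal block $B$ and checking that the gain at the vertices of $B-x$ outweighs the loss $\alpha d_B(x)$ at the cut vertex; your per-block accounting via $|T| = 1+\sum_B(|B|-1)$ is that induction unrolled, with multiple blocks again contributing no net cost. Your numerical condition $k\bigl(\frac{\tau}{\lambda}-\frac{2}{\lambda-2}\bigr)\ge 1$ is equivalent to the paper's requirement $-2+\tau-\frac{2\tau}{\lambda}-\frac{\lambda-2}{k}\ge 0$, and it is indeed exactly where the thresholds $k\ge 47$ and $k\ge 352$ come from.
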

\begin{proof}
    We proceed by induction on $|T|$.
    First, suppose $T$ is a single 
    vertex $v$. 
    Then $\Phi(v) \geq -1 + \alpha(k-1) = -1 + \frac{k-1}{k} (\lambda-2) > 1$.

    Now, suppose $|T| \geq 2$.
    Let $B$ be a terminal block of $T$. Let $x$ be the cut-vertex of $B$ if $B \subsetneq V(T)$, and let $x \in V(B)$ be arbitrarily chosen if $B = T$.
    Let $T' = T - (B-x)$.
    By the induction hypothesis, $\Phi(T') \geq  |T'|$. We aim to show that $\Phi(T) - \Phi(T') \geq |T| - |T'|$.

    If $B = K_q$, then $q \leq  \lceil (1-\frac{\tau }{\lambda})k \rceil - 1$. Then for each $v \in V(B - x)$, we have $d_T(v) < (1-\frac{\tau}{\lambda}) k-1$, so that 
    \[\Phi_T(v) > -1 + \alpha \left (\frac{\tau  k}{\lambda} \right ) = -1 + \frac{\lambda-2}{k} \left (\frac{\tau  k}{\lambda} \right ) 
    = -1 + \tau - \frac{2 \tau}{\lambda} .\]
    Also, $\Phi_T(x)-\Phi_{T'}(x) = -(q-1) \alpha $.
    Therefore, 
    \[\Phi(T) - \Phi(T') \geq (q-1)\left ( -1 + \tau - \frac{2  \tau}{\lambda} \right ) - \alpha(q-1) = \left (  -1 + \tau - \frac{2  \tau}{\lambda} - \frac{\lambda-2}{k} \right )(q-1) \geq q-1=|B-x|,\]
    and the lemma holds by induction.
    (The lower bounds for $k$ in Theorem \ref{thm:main} are chosen so that this inequality holds.)


    If $T = rK_q$ for some $r \geq 2$, then for each $v \in V(B-x)$, $d_T(v) = (q-1)r$, and so
    $\Phi(v) \geq -1 + \alpha (k-1-(q-1)r) + \frac 13 \mu (q-1)(r-1)$. We write $d = (q-1)r$ so that for each $v \in V(B-x)$,
\begin{eqnarray*}
\Phi(v) &\geq& -1 + \alpha (k-1-d) + \frac 13 \mu d\left (1-\frac 1r \right ) = -1 + \alpha(k-1) + d\left (-\alpha + \frac 13 \mu \left (1-\frac 1r \right ) \right ) \\
&> & -1 + \alpha(k-1) = -1 + \frac{k-1}{k} (\lambda-2) \geq 1.
\end{eqnarray*}

    Also, $\Phi_T(x)-\Phi_{T'}(x) = \frac 13 \mu (q-1)(r-1) - \alpha (q-1)r$. Since $r \geq 2$, we have $r-1 \geq \frac 12 r$, so
    $\Phi_T(x)-\Phi_{T'}(x) \geq \frac 16 \mu (q-1)r - \alpha (q-1)r = (\frac 16 \mu - \alpha) d > 0$.
    Thus, $\Phi(T)-\Phi(T') > |B-x|$, and we are done by induction.

    If $B = C_q$ for some $q \geq 4$, then for each $v \in V(B-x)$,
    \[\Phi(v) \geq -1 + \alpha(k-3) = -1 + \frac{k-3}{k} (\lambda-2) .\] 
    Also $\Phi_T(x)-\Phi_{T'}(x) = -2\alpha$ so 
    $\Phi(T) - \Phi(T') \geq \frac{k-3}{k} (\lambda-2) |B-x| - 2\alpha > |B-x|$, and
    the lemma holds
    by induction.

    If $B = rC_q$ for some $q \geq 4$ and $r \geq 2$, then $\Phi_T(x)-\Phi_{T'}(x) = -2r \alpha + \frac 23 \mu(r-1) \geq -2r \alpha +\frac 13 \mu r > 0$.
    Also, for each $v \in V(B-x)$, we have 
    \[\Phi_T(v) \geq -1 + \alpha(k-1-2r) + \frac 23 \mu(r-1) \geq -1 + \frac{k-1}{k} (\lambda-2) + r(-2\alpha + \frac 13 \mu)  > 1.\]
    Thus $\Phi(T) - \Phi(T') >  |B-x|$, and the lemma holds by induction.

    As $B$ is a multiple of a clique or cycle, this exhausts all cases.
     %
%
\end{proof}

\subsection{The discharging argument}
To complete the proof of Theorem \ref{thm:main},
we use a discharging argument to show that $\rho(G) \leq -k+1$.
We give each vertex $v$ a charge $\sigma_1(v) = \rho(v) - \lambda d(v)$
and give each edge $e$ a charge of $\sigma_1(e) = \rho(e) + 2\lambda$,
so that $\rho(G) = \sum_{x \in V(G) \cup E(G)} \sigma_1(x)$.
Then, we execute the following discharging rules:
\begin{enumerate}
    \item[(R1)]
    For each $v \in V(G) \setminus V(\mathcal L)$ and edge $e$ incident to $v$, 
    let $v$ take charge $\frac{\lambda-2}{k}$ from
    the vertex joined to $v$ by $e$.
    Write $\sigma_2(v)$ for the resulting charge at $v$.
    \item[(R2)] For each $u \in V(\mathcal L)$, 
    after $u$ gives 
    charge according to (R1),
    let $u$ give charge $\frac 13 \mu$ to each edge $e \in E(\mathcal L)$ incident to $u$ satisfying $\rho(e) = -(2\lambda + \mu)$.
    Let the resulting charge of $u$
    be called $\sigma_2(u)$.
\item[(R3)] For each $e \in E(G) \setminus E(\mathcal L)$, write $\sigma_2(e) = \sigma_1(e)$.
\end{enumerate}

Observe that 
(R1)-(R3) preserve overall charge, 
and $\sigma_2(e) \leq 0$ for each $e \in E(G)$.
In particular, if $\rho(e) = -(2\lambda + \mu)$, then $\sigma_2(e) \leq -\frac 13 \mu$.
Therefore,
$\rho(G) \leq \sum_{v \in V(G)} \sigma_2(v)$. We carry out an additional stage of discharging.

\begin{itemize}
    \item[(R4)] Write $\sigma_2(\mathcal L) = \sum_{v \in V(\mathcal L)} \sigma_2(v) $.
For each $v \in V(\mathcal L)$, let $\sigma_3(v) = \frac{\sigma_2(\mathcal L)}{|V(\mathcal L)|}$.
\item[(R5)] For each $v \in V(G) \setminus V(\mathcal L)$, let $\sigma_3(v) = \sigma_2(v)$.
\item[(R6)] For each edge $e \in E(G)$, write $\sigma_3(e) = \sigma_2(e)$.
\end{itemize}

Observe that
(R4) and (R5) preserve overall charge, so
$\rho(G) \leq \sum_{v \in V(G)} \sigma_3(v)$.

\begin{claim}
    If $v \in V(G) \setminus  V(\mathcal L)$, then $\sigma_3(v) \leq  -1$.
\end{claim}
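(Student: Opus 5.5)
The plan is a direct computation of the charge at a non-low vertex $v$: bound the net charge it gains under (R1), then use $d(v) \geq \ell(v)+1$ together with $\ell(v) \leq k-1$. Since $v \notin V(\mathcal L)$, rules (R2) and (R5) do not affect $v$, so $\sigma_3(v) = \sigma_2(v)$ and only (R1) needs to be tracked.

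\emph{Step 1: the transfer under (R1).} Write $\alpha = \frac{\lambda-2}{k}$. Under (R1), $v$ takes $\alpha$ along each of its $d(v)$ incident edges. It also gives away $\alpha$ along each edge joining it to another non-low vertex. The net change is therefore $\alpha$ times the number of edges from $v$ to low vertices, which is at most $\alpha d(v)$. Hence
\[
\sigma_3(v) = \sigma_2(v) \leq \rho(v) - \lambda d(v) + \alpha d(v) = \rho(v) - (\lambda - \alpha) d(v).
\]

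\emph{Step 2: the degree bound.} Since $v$ is not low, Lemma~\ref{lem:dl} gives $d(v) \geq \ell(v)+1$. Because $\lambda - \alpha > 0$, we may substitute this lower bound. Using $\rho(v) = \ell(v)(\lambda+1) - (k-1) + F(v)$, we get
\[
\sigma_3(v) \leq \ell(v)(\lambda+1) - (k-1) + F(v) - (\lambda-\alpha)(\ell(v)+1) = \ell(v)(1+\alpha) - \lambda + \alpha - k + 1 + F(v).
\]

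\emph{Step 3: maximize over $\ell(v)$.} The right-hand side is increasing in $\ell(v)$, and $F(v) \leq 1$, with $F(v)=1$ only when $\ell(v) = k-1$. So the maximum is attained at $\ell(v) = k-1$, $F(v) = 1$, which gives
\[
(k-1)(1+\alpha) - \lambda + \alpha - k + 2 = \alpha k - \lambda + 1 = (\lambda - 2) - \lambda + 1 = -1.
\]
If $\ell(v) \leq k-2$, then $F(v) = 0$ and the bound is strictly smaller. Hence $\sigma_3(v) \leq -1$.

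There is no real obstacle here. The only point needing care is Step 1: one must check that $v$ loses charge only to non-low neighbours, so that its net gain is bounded by $\alpha d(v)$ even though it also gives charge away.
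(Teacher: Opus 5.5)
Your proof is correct and follows essentially the same computation as the paper. You bound the gain from (R1) by $\frac{\lambda-2}{k}d(v)$, substitute $d(v)\geq \ell(v)+1$ (valid since the coefficient of $d(v)$ is negative), and use $\ell(v)\leq k-1$ to reach $-1$. Your explicit check that $v$'s net gain under (R1) is $\frac{\lambda-2}{k}$ times the number of edges to low vertices is a detail the paper leaves implicit, and tracking $F(v)$ instead of using the uniform bound $\rho(v)\leq \ell(v)(\lambda+1)-k+2$ is only cosmetic.
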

\begin{proof}
If $v \in V(G) \setminus  V(\mathcal L)$, then 
\begin{eqnarray*}
\sigma_1(v) &=& \rho(v) - \lambda d(v) \\
&\leq& 1 + \ell(v) (\lambda+1) - (k-1) - \lambda d(v) \\
&=& 2 + \ell(v) - k + \lambda (\ell(v)-d(v)) 
\end{eqnarray*}
Therefore, by (R1),
\[
\sigma_3(v) = \sigma_2(v) \leq 2 + \ell(v) - k + \lambda(\ell(v)-d(v)) + \frac{\lambda-2}{k} d(v).
\]
Since $-\lambda + \frac{\lambda-2}{k} < 0$, 
it follows that this upper bound for $\sigma_3(v)$ is decreasing as a function of $d(v)$; therefore, as $d(v) \geq \ell(v)+1$,
\[
\sigma_3(v) \leq 2 + \ell(v) - k - \lambda + \frac{\lambda-2}{k} (\ell(v)+1) \leq 1 - \lambda + \lambda - 2 = -1.
\]
\end{proof}
\begin{claim}
\label{claim:L}
    For each $u \in V(\mathcal L)$,
    $\sigma_3(u) \leq -1$.
\end{claim}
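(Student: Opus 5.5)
The plan is to compute $\sigma_2(u)$ exactly for each low vertex $u$ and recognize it as $-\Phi_T(u)$, where $T$ is the component of $\mathcal L$ containing $u$. Lemma \ref{lem:GDP} then gives $\sum_{u\in V(T)}\sigma_2(u)\le -|T|$ for each component $T$. Summing over components gives $\sigma_2(\mathcal L)\le -|V(\mathcal L)|$, so the averaging rule (R4) yields $\sigma_3(u)=\sigma_2(\mathcal L)/|V(\mathcal L)|\le -1$ for every $u\in V(\mathcal L)$.

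\emph{Step 1: the identity $\sigma_2(u) = -\Phi_T(u)$.} Fix $u\in V(\mathcal L)$ and let $T$ be its component in $\mathcal L$. Since $u$ is low, $d(u)=\ell(u)$, so $\sigma_1(u)=\rho(u)-\lambda\ell(u)$.

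Under (R1), $u$ itself takes nothing, because it lies in $\mathcal L$. It gives $\alpha=\frac{\lambda-2}{k}$ along each edge joining it to a vertex outside $\mathcal L$. Edges from $u$ to other low vertices are edges of $\mathcal L$, since $\mathcal L$ is induced, so they lie in $T$. Hence the number of edges from $u$ to vertices outside $\mathcal L$ is $d(u)-d_T(u)=\ell(u)-d_T(u)$, with parallel edges counted separately, matching the per-edge rule. So $u$ loses exactly $\alpha(\ell(u)-d_T(u))$.

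Under (R2), $u$ gives $\frac13\mu$ to each incident edge of $\mathcal L$ that is not the first edge in its parallel class. For each neighbor $w$ in $T$ there are $|E_T(u,w)|-1$ such edges, so the total given is exactly $m(u)$.

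Combining these,
\[
\sigma_2(u)=\rho(u)-\lambda\ell(u)-\alpha(\ell(u)-d_T(u))-m(u)=-\Phi_T(u).
\]

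\emph{Step 2: verify the hypotheses of Lemma \ref{lem:GDP} for each component $T$ of $\mathcal L$.}
\begin{itemize}
\item $T$ is a GDP-tree by Lemma \ref{lem:L-is-GDP}.
\item $T$ has maximum degree at most $k-1$, since $d_T(u)\le d(u)=\ell(u)\le k-1$.
\item $T$ has no $K_t$ subgraph with $t\ge(1-\frac{\tau}{\lambda})k$. This is the preceding lemma, which excludes $K_{\lceil(1-\frac{\tau}{\lambda})k\rceil}$ from $\mathcal L$, and so excludes every larger clique as well.
\end{itemize}
Lemma \ref{lem:GDP} therefore gives $\Phi(T)\ge|T|$, which combined with Step 1 gives $\sum_{u\in V(T)}\sigma_2(u)=-\Phi(T)\le -|T|$.

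\emph{Step 3: conclude.} Summing over the components of $\mathcal L$ gives $\sigma_2(\mathcal L)\le -|V(\mathcal L)|$. By (R4), $\sigma_3(u)\le -1$ for every $u\in V(\mathcal L)$.

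The only point requiring care is the bookkeeping in Step 1, namely that the charge leaving $u$ in (R1) and (R2) is exactly what appears in the definition \eqref{eqn:Phi} of $\Phi_T(u)$. This rests on two facts: $\mathcal L$ is an induced subgraph, so every edge from $u$ either lies in $T$ or goes outside $\mathcal L$; and the (R1) transfer is counted per edge, so multi-edges to non-low vertices are handled consistently. Once the identity holds, the claim reduces immediately to Lemma \ref{lem:GDP}.
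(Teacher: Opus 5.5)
Your proposal is correct and follows the paper's own proof. Both establish $\sigma_2(u)=-\Phi_T(u)$ from (R1)--(R2), apply Lemma \ref{lem:GDP} to each component of $\mathcal L$, and average via (R4). Your bookkeeping is cleaner than the paper's, whose displayed (R1) term $(d(v)-\ell(v))\frac{\lambda-2}{k}$ should read $(d_T(u)-\ell(u))\frac{\lambda-2}{k}$, and which does not explicitly check the hypotheses of Lemma \ref{lem:GDP} (GDP-tree by Lemma \ref{lem:L-is-GDP}, maximum degree at most $k-1$, the clique bound) as you do.
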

\begin{proof}
    Let $u \in V(\mathcal L)$, and let $T$ be the component of $\mathcal L$ containing $u$.
    By Lemma \ref{lem:GDP}, $T$ is a GDP-tree.
    By (R1) and (R2),
    we have 
    \[\sigma_2(u) = \rho(u) - \lambda \ell(u) + (d(v) - \ell(v))\frac{\lambda-2}{k} - \frac 13 \mu \sum_{w \in N_T(u)} (|E_T(u,w)| - 1) =  -\Phi_T(u),\]
    where $\Phi_T(u)$ is defined in \eqref{eqn:Phi}.
    Therefore, 
    by Lemma \ref{lem:GDP},
    $\sum_{u \in V(\mathcal L)} \sigma_2(u) \leq -|V(\mathcal L)|$. Hence, $\sigma_3(u) = \frac{\sigma_2(\mathcal L)}{|V(\mathcal L)|} \leq -1$.
\end{proof}

Now,
let $v$ be a vertex for which $\ell(v)$ is largest. 
By Lemma \ref{lem:dl},
$d(v) \geq \ell(v)$.
First, suppose $\ell(v)=k-1$.
Since each $w \in V(G)$ satisfies
$\sigma_3(w) \leq -1$, 
it follows that 
\[\rho(G) \leq \sum_{x \in V(G) \cup E(G)} \sigma_3(x) \leq -(|N(v)|+1) - \frac 13 \mu (d(v) - |N(v)|) \leq  - (d(v) + 1) \leq -k.\]
This contradicts the choice of $(G,\ell)$.

Next, suppose $\ell(v)=k-2$.
Then, for each $w \in V(G)$, $\sigma_1(w) \leq -1$.
Then, the total charge in $G$ is at most 
\begin{eqnarray*}
\rho(G) &=& \sum_{x \in V(G) \cup E(G)} \sigma_1(x) \leq 
-(1 + |N(v)|) - \mu(d(v) - |N(v)|) \\
&\leq& 
-(1+d(v)) \leq -(1 + \ell(v)) = -k+1.
\end{eqnarray*}

Finally, suppose $\ell(v) \leq k-3$.
Then, $\sigma_1(w) \leq -2$ for each $w \in V(G)$.
Each edge $e$ incident to $v$ either carries a charge $\sigma_1(e) = -\mu < -2$ or joins $v$ to a vertex $w$ with $\sigma_1(v) \leq -2$.
Therefore, if $d(v) \geq \frac {k-1}{2}$, then
$\rho(G) = \sum_{w \in V(G)} \sigma_1(w) + \sum_{e \in E(G)} \sigma_1(e) \leq -k+1$, a contradiction.
Therefore, $\ell(v) \leq d(v) \leq \frac k2-1$.
Hence, for each $w \in V(G)$,
$\sigma_1(w) \leq -\frac k2 $.
If $|V(G)| \geq 2$, then 
$\rho(G) \leq \sum_{w \in V(G)} \sigma_1(w) \leq -k$, contradicting the choice of $(G,\ell)$.
Therefore, $|V(G)| = 1$, so that $\ell(v)=d(v)=0$, and hence $\rho(G) = -k+1$, contradicting the choice of $(G,\ell)$.
This final contradiction completes the proof.





\section{Density of graphs with minimal covers}
 
Let $G$ be a multigraph, 
and let $(H,L)$ be a correspondence cover of $G$.
We say that $(G,H,L)$ is \emph{minimal} 
if $G$ has no $(H,L)$-coloring, but
if $H'$ is obtained from $H$ by removing any edge,
$G$ has an $(H',L)$-coloring.
In this section, we show that if 
$(G,H,L)$ is a minimal triple, then 
a similar result to Theorem \ref{thm:main} holds.
In particular, when $G$ is a simple graph, the bounds in Theorem \ref{thm:intro} also hold.
As a corollary, we prove
Theorem \ref{thm:fnk}.

We briefly note that the notion of a minimal triple $(G,H,L)$ is closely related
to the notion of a minimal unsatisfiable constrained satisfaction problem.
A (binary) \emph{constrained satisfaction problem (CSP)}
is a triple $(X,D,C)$, 
where $X = \{X_1, \dots, X_n\}$
is a set of variables,
$D = \{D_1, \dots, D_n\}$ is a set of corresponding variable domains, 
and $C = \{C_1, \dots, C_m\}$ is
a set of constraints.
Each constraint $C_j$ consists
of a pair of indices $\{s_j,t_j\} \subseteq \{1, \dots, n\}$, along with a binary relation $R_j$
that assigns a truth value to each element $D_{s_j} \times D_{t_j}$.
Then, an \emph{evaluation}
$\phi$ that maps $\phi(X_i) \in D_i$ for each $i$
is \emph{consistent}
if for each constraint $C_j \in C$,
the values $(\phi(X_{s_j}), \phi(X_{t_j}))$ form a pair that is true with respect to the relation $R_j$.
A CSP naturally gives rise to a \emph{primal graph} on $X$ with edges given by the pairs $t_j$ in $C$.
A  CSP $(X,D,C)$ is \emph{anti-functional}
if for each 
$C_j \in C$ and corresponding pair $\{s_j,t_j\}$,
if $x \in D_{s_j}$,
then there is at most one $y \in D_{t_j}$ for which $xy$ is false for the binary relation $R_j$.
CSPs play a key role in the theoretical study of artificial intelligence; for further discussion, see e.g.~\cite{Foundations}.

The CSP $(X,D,C)$ is a \emph{minimal unsatisfiable CSP}
if $(X,D,C)$
has no consistent evaluation but has a consistent evaluation after removing any single constraint.
Thus, a minimal unsatisfiable anti-functional  CSP has an exact representation as a minimal triple $(G,H,L)$.
The following theorem shows that
for each $d \geq 3$,
a minimal unsatisfiable anti-functional
CSP
with size-$d$ domains
has a primal graph that either contains a $K_{d+1}$ or has average degree at least $d+\frac 1{28}$.

\begin{theorem}
\label{thm:GHL-min}
    Let $(k,\lambda)$ be a pair for which $k \geq 4$, $\lambda \in \{24,28\}$, and $k \geq 352$ whenever $\lambda = 24$.
    Let $G$ be a simple graph,
    and let $(G,H,L)$ be a minimal triple.
    If $\ell(v) = |L(v)| = k-1$ for each $v \in V(G)$,  then $|E(G)| > (k-1+\frac 1{\lambda}) \frac n2$ or $G$ has a $K_k$-subgraph.
\end{theorem}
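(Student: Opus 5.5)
The plan is to reduce the statement to the criticality results already available: Theorem \ref{thm:main} when $k \geq 47$, and Theorem \ref{thm:BCKX} when $4 \leq k \leq 46$. The step that carries the content is showing that a minimal triple forces $G$ itself to be correspondence $k$-critical, once degenerate vertices and edges are excluded.

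\emph{Step 1 (pass to a critical subgraph).} Since $G$ has no $(H,L)$-coloring, choose a subgraph $G' \subseteq G$ that is minimal under deletion of vertices and edges subject to $G'$ having no $(H,L)$-coloring. Here the cover is restricted to $G'$: keep $L(v)$ for $v \in V(G')$, and keep the $H$-edges between $L(u)$ and $L(v)$ only when $uv \in E(G')$. Every proper subgraph of $G'$ is then $(H,L)$-colorable. I will upgrade this to: for every $\ell$-cover $(H'',L'')$ of a proper subgraph, that subgraph is colorable. The cleanest route is to take $G'$ minimal over all $\ell$-covers, i.e.\ a subgraph $G'$ of $G$ with $(G',\ell)$ minimal in the sense of Section~\ref{sec:p-setup}. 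This exists because $G$ is not correspondence $k-1$-colorable: the cover $(H,L)$ witnesses this. Since $\ell \equiv k-1$, minimality of $(G',\ell)$ is equivalent to $G'$ being correspondence $k$-critical.

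\emph{Step 2 (the critical subgraph is all of $G$).} This is the main obstacle. I would first fix $G'$ as in the restricted-cover version, so that $G'$ is not $(H,L)$-colorable. Suppose some edge $uv \in E(G)$ is not in $E(G')$, and $H$ has an edge $\alpha\beta$ with $\alpha \in L(u)$, $\beta \in L(v)$. Deleting $\alpha\beta$ from $H$ does not touch the part of the cover seen by $G'$, so $G' \subseteq G$ is still uncolorable. Hence $G$ is still uncolorable, which contradicts minimality of $(G,H,L)$.

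So every edge of $G$ carrying at least one $H$-edge lies in $G'$. I would treat edges of $G$ with empty matchings, and isolated vertices, as degenerate. These correspond to no constraint in the CSP language, and they cannot occur in the primal graph. I would adopt the convention that such edges and vertices are excluded, and flag that the statement is false without it, since adding isolated vertices preserves minimality of the triple. With this convention, $V(G') = V(G)$ and $E(G') = E(G)$. Taking a further minimal subgraph over all covers can only shrink $G'$, so the same edge argument applies to it. For this it is cleanest to pick the critical subgraph inside the already-uncolorable $G'$ using $(H,L)$ itself as the witnessing cover. Hence $G$ is correspondence $k$-critical.

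\emph{Step 3 (density).} If $G$ has no $K_k$ subgraph, then $G \neq K_k$ and $G$ is correspondence $k$-critical. For $k \geq 47$ (and $k \geq 352$ when $\lambda = 24$), Theorem \ref{thm:main} gives $\rho(G) \leq -k+1$. For simple $G$ with $\ell \equiv k-1$ this reads $2\lambda|E(G)| \geq ((k-1)\lambda+1)n + k-1$, so $|E(G)| > (k-1+\frac1\lambda)\frac n2$. For $4 \leq k \leq 46$ we have $\lambda = 28$. Here I would invoke Theorem \ref{thm:BCKX} in its correspondence version, using $n \geq k+2$: since $G$ is critical and not $K_k$, and no critical graph has $k+1$ vertices, this holds. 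I would also check that $\lceil \frac{k^2-7}{2k-7}\rceil \leq 25 < 28$ for $5 \leq k \leq 46$; for example $k=46$ gives $\lceil 2109/85 \rceil = 25$. For $k=4$ the bound $\frac85 n > (3+\frac1{28})\frac n2$ holds whenever $n \geq k+2$. The small case $k=4$ with $n<11$ needs Theorem \ref{thm:Gallai-fnk}-type reasoning, or a direct check, since Theorem \ref{thm:BCKX} requires $n \geq 11$; I expect this to be a minor finite check.
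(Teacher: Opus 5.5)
Your Step 2 has a genuine gap: the passage from ``$G$ is minimally non-$(H,L)$-colorable'' to ``$G$ is correspondence $k$-critical'' does not go through. You are right to flag degenerate vertices and edges; the paper's own proof tacitly needs the same nondegeneracy when it asserts that $G[S]$ is $(H,L)$-colorable for $S\subsetneq V(G)$. Under that convention, your edge argument correctly shows that every proper subgraph of $G$ is colorable from the restriction of the \emph{one} cover $(H,L)$. Criticality instead requires every proper subgraph to be colorable from \emph{every} $(k-1)$-cover. Your repair is to choose a correspondence $k$-critical subgraph that is also non-$(H,L)$-colorable, but that presupposes such a subgraph exists, and the critical subgraphs of $G$ may all be $(H,L)$-colorable.

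The implication is in fact false, and your argument uses nothing specific to $k$. Take lists of size $2$, with $L(v)=\{v_1,v_2\}$. Let $G$ be two $5$-cycles joined by an edge $xy$. On the cycle $xabce$ put the single $H$-edges $x_1a_1, a_2b_2, b_1c_1, c_2e_2, e_1x_1$; this forces $x_2$ in every coloring. Do the same on the other cycle to force $y_2$, and put the single $H$-edge $x_2y_2$ on $xy$. This is a minimal triple, triangle-free, with every edge carrying an $H$-edge. Yet $G$ properly contains $C_5$, which is not even $2$-colorable, so $G$ is not critical. Hence Step 3 cannot apply Theorem \ref{thm:main} with $\ell\equiv k-1$, or Theorem \ref{thm:BCKX}, to $G$ itself.

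The paper never claims $G$ is critical. It proves $\rho_\ell(V(G))\le -k+1$ directly, and the missing ingredient is Theorem \ref{thm:main} for \emph{reduced} list sizes.
\begin{itemize}
\item Let $S$ be a largest set with $\rho(S)\le -k+1$. It is nonempty: take the vertex set of any $G'$ with $(G',\ell)$ minimal.
\item If $S\neq V(G)$, then $G[S]$ has an $(H,L)$-coloring $\phi$; this is exactly your Step 2 observation. So $G-S$ has no $(H,L_\phi)$-coloring and contains some $G''$ with $(G'',\ell_\phi)$ minimal.
\item Theorem \ref{thm:main} with $\ell_\phi$ gives $\rho_{\ell_\phi}(V(G''))\le -k+1$. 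Each list loses at most $\|v,S\|$ nodes, so $\rho_\ell(V(G''))\le -k+1+(\lambda+2)\|V(G''),S\|$.
\item Therefore $\rho(S\cup V(G''))\le 2(-k+1)-(\lambda-2)\|V(G''),S\|<-k+1$, contradicting the maximality of $S$.
\end{itemize}
For $4\le k\le 46$ the paper likewise uses the potential-function version (Theorem 2.1 of \cite{BCKX}) with reduced lists, not the edge-count bound for critical graphs. Separately, your leftover case $k=4$, $n<11$ would still need to be actually carried out rather than expected.
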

\begin{proof}
    If $G$ has a $K_k$-subgraph, then we are done; therefore, we assume that $G$ is $K_k$-free.

    First, suppose that $k \geq 47$.
    We show that $\rho_{\ell}(V(G)) \leq -k+1$.
    As $(G,H,L)$ is minimal, $G$ has no $(H,L)$-coloring, and hence there exists a subgraph $G'$ of $G$ for which $(G',\ell)$ is minimal.
    As $G'$ is $K_k$-free, $\rho_{\ell}(V(G')) \leq -k+1$.
    
    We let $S \subseteq V(G)$ be the largest set for which $\rho(S) \leq -k+1$.
    As $|S| \geq |V(G')|$, $S$ is nonempty.
    If $S = V(G)$, then $\rho_{\ell}(V(G)) \leq -k+1$.
    Otherwise,
    $U \subsetneq V(G)$.
    As $(G,H,L)$ is minimal,
    there is an $(H,L)$-coloring $\phi$ of $G[S]$.
    As $G$ has no $(H,L)$-coloring, the graph $G -S$ has no $(H, L_{\phi})$-coloring.
    Therefore, $G-S$ has some subgraph $G''$ such that $(G'',\ell'_{\phi})$ is minimal, where $\ell'_{\phi}(v) = |L'_{\phi}(v)|$ for each $v \in V(G'')$.
    By Theorem \ref{thm:main},
    $\rho_{\ell'_{\phi}}(V(G'')) \leq -k+1$.

    Now, let $S' = S \cup V(G'')$.
    Note that for each $v \in V(G'')$,
    $\ell'_{\phi}(v) \geq k - \|v,S\|$,
    so that $\rho_{\ell'_{\phi}}(v) \geq \rho(v) - (\lambda+2)\|v,S\|$.
    Therefore, $\rho(V(G'')) \leq \rho_{\ell'_{\phi}}(V(G'')) + (\lambda+2)\|V(G''), S\|$, and hence 
    \[
        \rho(S') \leq \rho(S) + \rho(V(G'')) - 2\lambda \|V(G''), S\| \leq (-k+1) + (-k+1) - (\lambda-2)\|V(G''),S\| < -k+1,
    \]
    contradicting the maximality of $S$.
    Therefore, $\rho_{\ell}(V(G)) \leq -k+1$.
    Thus,
    it follows that
    $((k-1)\lambda+1)|V(G)| - 2\lambda |E(G)| \leq -k+1$.
    Rearranging, we find that $|E(G)| \geq (k-1+\frac 1{\lambda})\frac n2 + \frac{k-1}{2} > (k-1+\frac 1{\lambda}) \frac n2$,
    which is what we needed to show.

    Now, suppose that $4 \leq k \leq 46$.
    We repeat the same argument above with $\lambda = \left \lceil \frac{k^2-7}{2k-7} \right \rceil < 28$.
    Instead of Theorem \ref{thm:main}, we use Theorem 2.1 of \cite{BCKX}, which mutatis mutandis shows that $|E(G)| \geq (k-1+\frac{1}{\lambda}) \frac n2 + 1 > (k-1+\frac{1}{28}) \frac n2$, which is what we needed to show.
\end{proof}

We observe that 
the first part of the argument in Theorem \ref{thm:GHL-min}
implies the following more technical theorem without any changes.

\begin{theorem}
    Let $(k,\lambda)$ be a pair for which $k \geq 47$, $\lambda \in \{24,28\}$, and $k \geq 352$ whenever $\lambda = 24$.
    Let $G$ be a multigraph,
    and let $(G,H,L)$ be a minimal triple.
    If $\ell(v) = |L(v)| \leq k-1$ for each $v \in V(G)$,  then $\rho_{\ell}(V(G)) \leq -k+1$ or $G$ has a $K_k$-subgraph.
\end{theorem}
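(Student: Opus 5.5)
We argue exactly as in the first part of the proof of Theorem \ref{thm:GHL-min}, now allowing multigraphs and arbitrary $\ell \leq k-1$. Assume $G$ has no $K_k$-subgraph. Since $(G,H,L)$ is minimal, $G$ has no $(H,L)$-coloring. Hence there is a subgraph $G'\subseteq G$ such that $(G',\ell|_{V(G')})$ is minimal in the sense of Section \ref{sec:p-setup}; to find it, delete vertices and edges while non-colorability under some $\ell$-cover persists. Then $G'$ is $K_k$-free, so Theorem \ref{thm:main} gives $\rho_{G',\ell}(V(G'))\leq -k+1$. Because $\rho$ is monotone under adding edges, $\rho_{G,\ell}(V(G'))\leq -k+1$ as well. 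We must be careful that the "first edge" convention for parallel edges stays consistent. If $G'$ keeps only some parallel copies, then relabeling which copy counts as first only lowers the potential when more edges are present, so the inequality still holds.

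Next, let $S$ be a maximal set with $\rho_{G,\ell}(S)\leq -k+1$; it is nonempty. Suppose $S\neq V(G)$. By minimality of the triple, deleting any edge of $H$ makes $G$ colorable. In particular, take an edge of $H$ between $L(u)$ and $L(w)$ with $u\notin S$, or any edge not inside $H[\bigcup_{v\in S}L(v)]$; such an edge exists, since otherwise $G-S$ would be isolated in $H$ and trivially colorable given a coloring of $G[S]$. Removing it yields a coloring whose restriction is an $(H,L)$-coloring $\phi$ of $G[S]$. If no such edge exists, $G$ is disconnected in the relevant sense, and we handle components separately.

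By Lemma \ref{lem:partial}-style reasoning, $G-S$ has no $(H,L_\phi)$-coloring. So there is $G''\subseteq G-S$ with $(G'',\ell'_\phi)$ minimal, where $\ell'_\phi(v)=|L_\phi(v)|\leq \ell(v)\leq k-1$. Theorem \ref{thm:main} applies, since $G''$ is $K_k$-free, and gives $\rho_{\ell'_\phi}(V(G''))\leq -k+1$. Each vertex $v\in V(G'')$ loses at most $\|v,S\|$ list entries. From \eqref{eqn:potential}, dropping $\ell$ by one lowers $\rho(v)$ by at most $\lambda+2$; the extra $+1$ comes from the jump $F(v)$ at $k-1$. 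Hence
\[
\rho_\ell(V(G''))\leq -k+1+(\lambda+2)\|V(G''),S\|.
\]
Every edge between $V(G'')$ and $S$ contributes at most $-2\lambda$. Therefore
\[
\rho(S\cup V(G''))\leq 2(-k+1)-(\lambda-2)\|V(G''),S\|<-k+1,
\]
which contradicts the maximality of $S$. It follows that $\rho_\ell(V(G))\leq -k+1$.

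The main obstacle is the bookkeeping rather than any new idea. There are two points to check. First, in the multigraph setting the potential of a set computed in a subgraph bounds the potential in $G$ from above, including the $-(2\lambda+\mu)$ terms for parallel edges. Second, the coloring $\phi$ of $G[S]$ must exist, which needs $S\neq V(G)$ together with minimality of the triple. Apart from these, the argument repeats the proof of Theorem \ref{thm:GHL-min} with $k\geq 47$, so that Theorem \ref{thm:main} applies.
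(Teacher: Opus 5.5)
Your proposal is the paper's argument (the first half of the proof of Theorem \ref{thm:GHL-min}), step for step. Everything after $\phi$ is in hand is correct, including your bookkeeping for parallel edges and for the jump $F(v)$.

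The gap is the existence of the $(H,L)$-coloring $\phi$ of $G[S]$. Minimality of the triple produces $\phi$ only if some edge $e$ of $H$ is not contained in $H[\bigcup_{v\in S}L(v)]$. In that case an $(H-e,L)$-coloring of $G$ restricts to an $(H,L)$-coloring of $G[S]$. Your reason why such an $e$ exists (``otherwise $G-S$ would be isolated in $H$ and trivially colorable given a coloring of $G[S]$'') presupposes the very coloring of $G[S]$ you are trying to obtain. The fallback ``handle components separately'' does not help either. Suppose every edge of $H$ lies inside $\bigcup_{v\in S}L(v)$. If some $v\notin S$ has $\ell(v)=0$, you are fine, since $\rho(S\cup\{v\})\le 2(-k+1)$ contradicts maximality. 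Otherwise, all you learn is that $G[S]$ is itself uncolorable, and nothing bounds the potential contributed by $\overline S$, which may be large and positive.

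In fact the statement is false in exactly this situation, because an edge of $G$ may carry an empty matching. Let $G=K_2$ on $\{u,v\}$ with $L(u)=\emptyset$, $|L(v)|=k-1$, and $H$ edgeless. Then:
\begin{itemize}
\item $G$ has no $(H,L)$-coloring;
\item the minimality condition holds vacuously;
\item $G$ has no $K_k$;
\item $\rho_\ell(V(G))=-(k-1)+\bigl((k-1)\lambda+1\bigr)-2\lambda=(k-3)\lambda-k+2>-k+1$.
\end{itemize}
Theorem \ref{thm:GHL-min} is affected too: take a minimal triple with $G$ simple, $K_k$-free and $\ell\equiv k-1$, and add many isolated vertices, each with $k-1$ isolated nodes.

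The paper's proof simply asserts ``As $(G,H,L)$ is minimal, there is an $(H,L)$-coloring $\phi$ of $G[S]$,'' so it has the same hole. To make either argument valid you need an extra hypothesis guaranteeing that for every nonempty $S\subsetneq V(G)$, some edge of $H$ leaves $\bigcup_{v\in S}L(v)$. One sufficient hypothesis is that $G$ is connected and every edge of $G$ carries at least one edge of $H$. This does hold in the application in Theorem \ref{thm:fnk-2}. A list $k$-critical $G$ is connected. And if some edge $uv$ carried no edge of $H''$, then an $L$-coloring of $G-uv$ would be an $(H'',L')$-coloring of $G$.
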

For $4 \leq k \leq 46$, a similar theorem can be obtained using
$\lambda=\left \lceil \frac{k^2-7}{2k-7} \right \rceil$ and
the definition of
$\rho$ from \cite{BCKX}.
Now, we prove Theorem \ref{thm:fnk} as a corollary of Theorem \ref{thm:GHL-min}.
We restate Theorem \ref{thm:fnk}
for the reader's convenience.
\begin{theorem}
\label{thm:fnk-2}
    Let $k \geq 47$ and $n \geq k+2$ be integers. Then,
    \begin{enumerate}
        \item $f_{\ell}(n,k) \geq (k-1+\frac 1{28}) \frac n2 + \frac{k-1}{2}$, and $f_{DP}(n,k) \geq (k-1+\frac 1{28}) \frac n2 + \frac{k-1}{2}$;
        \item If $k \geq 352$,
        then $f_{\ell}(n,k) \geq (k-1+\frac 1{24}) \frac n2 + \frac{k-1}{2}$, and $f_{DP}(n,k) \geq (k-1+\frac 1{24}) \frac n2 + \frac{k-1}{2}$.
    \end{enumerate}
\end{theorem}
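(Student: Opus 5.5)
The plan is to deduce the statement from Theorem~\ref{thm:GHL-min}, or more precisely from the inequality $\rho_\ell(V(G)) \leq -k+1$ established in its proof. The main work is a reduction from a list $k$-critical (or correspondence $k$-critical) simple graph to a minimal triple $(G,H,L)$ in which $G$ is the whole graph, $|L(v)| = k-1$ for every $v$, and $G$ is $K_k$-free. I take $\lambda = 28$ for part (1), and $\lambda = 24$ when $k \geq 352$ for part (2).

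\textbf{Reduction.} Let $G$ be list $k$-critical on $n \geq k+2$ vertices, and fix a list assignment $L_0$ with $|L_0(v)| = k-1$ for which $G$ has no $L_0$-coloring.

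\begin{enumerate}
\item Form the standard correspondence cover $(H_0,L)$. Set $L(v) = \{v\} \times L_0(v)$, and for each edge $uv$ join $(u,c)$ to $(v,c)$ for every $c \in L_0(u) \cap L_0(v)$. An $(H_0,L)$-coloring of $G$ is then exactly an $L_0$-coloring, so none exists.
\item Delete edges of $H_0$ one at a time, as long as $G$ still has no coloring, until we reach a cover $(H,L)$ for which $(G,H,L)$ is a minimal triple.
\item Check that every edge $uv \in E(G)$ still carries at least one edge of $H$. Otherwise $G$ has no $(H,L)$-coloring, and $(H,L)$ restricts to a $(k-1)$-fold cover of $G-uv$. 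We may then replace the surviving matchings by list-compatible ones, since $H \subseteq H_0$ comes from lists. This produces a list assignment of size $k-1$ for which $G - uv$ is not colorable, contradicting list criticality.
\item Check that $G$ has no $K_k$ subgraph. Since $n \geq k+2$, such a $K_k$ would be proper, and a proper $K_k$ subgraph is not $(k-1)$-list-colorable, again contradicting criticality.
\end{enumerate}

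The correspondence case is identical and easier: start directly from a bad $(k-1)$-fold cover of a correspondence $k$-critical graph and prune $H$ in the same way.

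\textbf{Applying Theorem~\ref{thm:GHL-min}.} With $G$ simple, $K_k$-free, $\ell \equiv k-1$ and $(G,H,L)$ minimal, the maximal-set argument in the proof of Theorem~\ref{thm:GHL-min} gives $\rho_\ell(V(G)) \leq -k+1$; this uses Theorem~\ref{thm:main}, which requires $k \geq 47$. Expanding $\rho$ with all vertex weights $(k-1)\lambda+1$ and all edge weights $-2\lambda$ gives
\[
((k-1)\lambda + 1)n - 2\lambda |E(G)| \leq -k+1 .
\]
I then rearrange this to the edge bound exactly as that proof does, obtaining $(k-1+\frac1\lambda)\frac n2$ plus the additive term $\frac{k-1}{2}$ claimed there. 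Minimizing over $G$ gives the bounds on $f_\ell(n,k)$ and $f_{DP}(n,k)$, with part (1) from $\lambda = 28$ and part (2) from $\lambda = 24$.

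\textbf{Main obstacle.} I expect the delicate step to be the additive constant. Dividing the displayed inequality by $2\lambda$ literally yields only $\frac{k-1}{2\lambda}$, not $\frac{k-1}{2}$. My first attempt to bridge the gap would be to prove a stronger potential bound $\rho(G) \leq -\lambda(k-1)$ for minimal $(G,\ell)$ with $\ell \equiv k-1$ and $|V(G)| \geq k+2$.

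I would try to get this from the final discharging step of Theorem~\ref{thm:main}. There the vertex $v$ with $\ell(v) = k-1$ contributes charge $-(d(v)+1)$, and I would aim to upgrade this to roughly $-\lambda$ per neighbor, using Lemma~\ref{lem:rho-LB} applied to $V(G) \setminus \{v\}$.

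If that fails, I would state the theorem with the constant $\frac{k-1}{2\lambda}$ that the argument genuinely supports. This weaker constant does not affect the consequences for $g_\ell(k)$ or $g_{DP}(k)$.
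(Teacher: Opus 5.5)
Your reduction is the paper's own proof. The paper builds the same list-to-correspondence cover, prunes it to a minimal triple, applies Theorem~\ref{thm:GHL-min} (whose proof is the maximal-set argument resting on Theorem~\ref{thm:main}), and rearranges; for $f_{DP}$ it simply cites Theorem~\ref{thm:intro}.

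Your steps 3 and 4 are more careful than the paper. The paper never checks that $G$ is $K_k$-free, nor that every edge of $G$ still carries an edge of $H$. The second check is genuinely needed. Without it, the step ``$G[S]$ has an $(H,L)$-coloring'' in the proof of Theorem~\ref{thm:GHL-min} is unjustified. For example, attaching many pendant vertices whose edges carry empty matchings keeps a triple minimal but destroys the density bound. For a list-critical $G$, your step 3 is exactly what rules this out.

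Your worry about the constant is justified, and it is not a gap you can close. The paper's final rearrangement makes exactly the slip you noticed: $((k-1)\lambda+1)n-2\lambda|E(G)|\le -k+1$ only gives $|E(G)|\ge (k-1+\frac1\lambda)\frac n2+\frac{k-1}{2\lambda}$. Moreover, the statement as printed is false.

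Let $D_k$ be the Haj\'os sum of two copies of $K_k$. It has a vertex $x$, disjoint $(k-2)$-sets $A_1,A_2$, and vertices $y_1,y_2$. Each of $A_i\cup\{x\}$ and $A_i\cup\{y_i\}$ is a clique, and there is one more edge $y_1y_2$.
\begin{itemize}
\item $D_k$ has $n=2k-1\ge k+2$ vertices and $k^2-k-1$ edges.
\item It is not $(k-1)$-colorable, since $y_1$ and $y_2$ would both have to receive the color of $x$.
\item For every edge $e$, $D_k-e$ is correspondence (hence list) $(k-1)$-colorable. Color $x$ first. What remains is a degree cover in which every component has a vertex with a spare color, so Theorem~\ref{thm:GDP-char} applies.
\end{itemize}
Hence $f_\ell(2k-1,k)$ and $f_{DP}(2k-1,k)$ are at most $k^2-k-1$. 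The claimed bound at $n=2k-1$ equals $k^2-k+\frac{2k-1}{2\lambda}$, which is larger.

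The same graph has $\rho(D_k)=2k-1-\lambda(k-3)>-\lambda(k-1)$. So your planned strengthening $\rho(G)\le-\lambda(k-1)$ through the discharging step cannot be proved.

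Delete the sentence claiming that the rearrangement yields $\frac{k-1}{2}$, and adopt your fallback. The argument proves the statement with additive constant $\frac{k-1}{2\lambda}$, and correspondingly $+\frac{k-1}{\lambda}$ in place of $+k-1$ in Theorem~\ref{thm:intro}. As you note, this suffices for $g_\ell$, $g_{DP}$ and the Brooks-type corollary.
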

\begin{proof}
    The lower bounds for $f_{DP}(n,k)$ follow immediately from Theorem \ref{thm:intro}, which follows from Theorem \ref{thm:main}. Therefore, our main task is to show that the lower bounds for $f_{\ell}(n,k)$ also hold.

    Let $k \geq 47$, and let $n \geq k+2$. Let $\lambda \in \{24,28\}$,
    with $\lambda=28$ whenever $k < 352$.
    Let $G$ be a list $k$-critical graph on $n$ vertices, and let $L$ be a list assignment on $G$ satisfying $|L(v)| = k-1$ for each $v \in V(G)$ such that $G$ has no $L$-coloring.
    Let $(H',L')$ be a correspondence cover of $G$ obtained as follows.
    For each $v \in V(G)$, let $L_0(v) = \{(v,c): c\in L(v)\}$.
    For each edge $uv \in E(G)$ and $c \in L(u) \cap L(v)$,
    add an edge $(u,c) (v,c)$ to $H'$.
    Observe that there is a bijection between $L$-colorings of $G$ and $(H',L')$-colorings of $G$.

    As $G$ has no $(H',L')$-coloring, there is a subgraph $H''$ of $H'$ for which $(G,H'',L')$ is minimal.
    By Theorem \ref{thm:GHL-min},
     as $|L(v)| = k-1$ for each $v \in V(G)$, it follows that 
    $\rho(G) \leq -k+1$.
    Therefore,
    $((k-1)\lambda+1)|V(G)| - 2\lambda |E(G)| \leq -k+1$.
    Rearranging, we find that $|E(G)| \geq (k-1+\frac 1{\lambda})\frac n2 + \frac{k-1}{2}$,
    which is what we needed to show.
\end{proof}

\section{Acknowledgment}
The main idea for this paper arose while working on a related paper with Alexandr Kostochka and Zimu Xiang \cite{BKX}. I am grateful to them for fruitful discussions.

\bibliographystyle{plain}
\bibliography{bib}

\end{document}